\date{\today}
\subjclass[2020]{03E35, 03E50, 03E05, 30D20, 30E05}
\keywords{}
\title{Wetzel Families and the continuum}
\author{Jonathan Schilhan and Thilo Weinert}
\address{School of Mathematics, University of Leeds, Leeds, LS2 9JT, UK}
\email{j.schilhan@leeds.ac.uk}
\address{Università degli Studi di Udine, Dipartimento di Scienze Matematiche, Informatiche e Fisiche (DMIF), via delle Scienze 206, 33100 Udine, Italy}
\email{thilo.weinert@uniud.it}
\thanks{The first author was supported by a UKRI Future Leaders Fellowship [MR/T021705/2]. The second author was supported by FWF Lise-Meitner grant no. M3037. No data are associated with this article. We would like to thank Martin Goldstern for suggesting the name ``Wetzel family" in honor of John Edwin Wetzel. The second author would also like to thank Prof. Dr. Jürgen Elstrodt for helpful communication via email. Both authors would like to thank the anonymous referee for their work.}
\DeclareMathOperator{\cf}{cf}
\DeclareMathOperator{\dom}{dom}
\DeclareMathOperator{\proj}{proj}
\DeclareMathOperator{\RE}{Re}
\newcommand{\axiom}[1]{\mathsf{#1}}
\newcommand{\ZFC}{\axiom{ZFC}}
\newcommand{\CH}{\axiom{CH}}
\newcommand{\ZF}{\axiom{ZF}}
\newcommand{\MA}{\axiom{MA}}
\newcommand{\GCH}{\axiom{GCH}}
\newcommand{\PFA}{\axiom{PFA}}
\DeclareMathOperator{\Hol}{\mathcal{H}(\mathbb{C})}
\newcommand{\QH}{{\mathbb{Q}(H)}}
\newcommand{\lth}[1]{\vert #1 \vert}
\theoremstyle{plain}
\newtheorem{thm}{Theorem}[section]
\newtheorem{lemma}[thm]{Lemma}
\newtheorem{prop}[thm]{Proposition}
\newtheorem{cor}[thm]{Corollary}
\newtheorem{claim}[thm]{Claim}
\theoremstyle{definition}
\newtheorem{definition}[thm]{Definition}
\newtheorem{quest}[thm]{Question}
\begin{document}

\begin{abstract}
We provide answers to a question brought up by Erd\H{o}s about the construction of Wetzel families in the absence of the continuum hypothesis --- a \emph{Wetzel family} is a family $\mathcal{F}$ of entire functions on the complex plane which pointwise assumes fewer than $\vert \mathcal{F} \vert$ values. To be more precise, we show that the existence of a Wetzel family is consistent with all possible values $\kappa$ of the continuum and, if $\kappa$ is regular, also with Martin's Axiom. In the particular case of $\kappa = \aleph_2$ this answers the main open question asked by Kumar and Shelah in \cite{Kumar2017}. In the buildup to this result, we are also solving an open question of Zapletal on strongly almost disjoint functions from \cite{Zapletal1997}. We also study a strongly related notion of sets exhibiting a universality property via mappings by entire functions 
and show that these consistently exist while the continuum equals $\aleph_2$.
\end{abstract}

\maketitle

\section{Introduction}

This paper is an investigation related to Wetzel's problem which comes from analysis yet has surprising set-theoretical aspects. While the subjects of analysis and set theory might nowadays be conceived as somewhat distant to each other, there are some examples of topics belonging to them both. Among the most prominent one is the problem of sets of uniqueness, cf. \cite{KechrisLouveau1987}, which led Cantor to investigate sets of real numbers and subsequently to the founding of set theory.

One of the greatest contributions of Cantor was the discovery of ordinals and cardinals and the distinction between countable and uncountable sets of reals. This led him to the formulation of the continuum hypothesis, $\CH$, which states that the cardinality of the set of all real numbers is the smallest one conceivable in light of this, the smallest uncountable cardinal $\aleph_1$, cf. \cite{Cantor1877}. In 1904 Ernst Zermelo axiomatised set theory in a way conforming to mathematician's practise hitherto, cf. \cite{904Z0}. Subsequently, A. Fraenkel added the replacement scheme, cf. \cite{922F0}, thus yielding the system $\ZFC$. Somewhat later, Kurt G\"odel showed that the continuum hypothesis cannot be refuted within this system (provided that there is anything which cannot be derived within it), cf. \cite{940G0}. G\"odel conjectured, cf. \cite[Section 4]{947G0}, that neither can it be proved in it but only several decades later, Paul Cohen, at the origin an analyst just like Cantor, developed the method of forcing and could prove that this is indeed the case, cf. \cite{963C0}.

Nowadays it is common to subdivide analysis into real and complex analysis. The latter's theorems about its objects of study, holomorphic functions, revealed deep connections between analysis and geometry and found applications in various areas, among them number theory. One striking feature of the family of functions which are holomorphic on some domain of complex numbers opposite the family of those which are merely smooth on an interval of real numbers is the intertwinement of the local and global behaviour of holomorphic functions. Whereas two distinct functions may be both identical and infinitely often differentiable on an interval of real numbers, the situation in the complex domain is quite different, due to the famous ``identity theorem". According to it, for any two distinct functions holomorphic on some complex domain, the set of points where they agree is discrete (see Proposition~\ref{prop:constantaccum}).

As the complex plane is separable, no uncountable set of complex numbers is discrete. Therefore, any two distinct holomorphic functions can only agree on a countable set of points. Subsequently further theorems underscored the difference between the realms of entire functions on the one hand and smooth functions on the real number line on the other. In the middle of the nineteenth century, it emerged from work of Liouville, cf. \cite[Chapter 11]{Gray2015}, that all bounded entire functions are constant. Later, Nevanlinna developed the theory named after him, cf. \cite{Cherry2001}, one upshot of which is that for any two distinct entire functions $f$ and $g$ there can be at most four complex values $a$ for which the preimages of $\{ a \}$ are equal. Another interesting property that is purely of combinatorial nature and can be stated irrespective of the topological and algebraic structure of $\mathbb{C}$, is Picard's Little Theorem, namely the fact that any non-constant entire function can avoid at most one single value (see e.g. \cite[Theorem 16.22]{Rudin87}). 

The emerging picture of complex analysis in general and of entire functions in particular was one of strong general principles governing their behaviour. Against this backdrop, while writing his dissertation during the sixties of the last century, John E. Wetzel asked (cum grano salis) whether any family $\mathcal{F}$ of entire functions such that for each complex number $z$ the set $\{f(z) :  f \in \mathcal{F}\}$ is countable, must be countable itself. Dixon showed that, assuming the failure of the continuum hypothesis, this is indeed the case --- in fact, this is a corollary of the aforementioned identity theorem, cf. \cite{015GS0}. Shortly thereafter, Erd\H{o}s proved that not only does the negation of the continuum hypothesis imply this statement, it is equivalent to it, \cite{Erdoes}. In fact this result is one among many statements in various areas of mathematics proved to be equivalent to the continuum hypothesis, cf. \cite{934S0, 968B0}.
Towards the end of his paper, Erd\H{o}s asked whether the analogue statement resulting from replacing ``countable" by ``fewer than continuum many" can be proved without assuming the continuum hypothesis. Following a suggestion by Martin Goldstern we subsequently refer to a family of entire functions whose members everywhere assume fewer values than the family has members altogether as a \emph{Wetzel family} (see Definition~\ref{def:Wetzel}). In this terminology, Erd\H{o}s asked whether the existence of a Wetzel family is provable from $\ZFC$. One might also ask whether the continuum hypothesis is equivalent to the existence of a Wetzel family.

Not long after the development of forcing by Cohen, Solovay and Tennenbaum instigated the theory of iterated forcing and Tony Martin stated what became known as Martin's Axiom, or $\MA$ for short, a weakening of the continuum hypothesis. It does not prescribe a particular value for the cardinality of the continuum but it does for instance imply that its cardinality is regular. In many cases when something can be proved for countable sets within $\ZFC$, Martin's Axiom allows us to generalise this to sets with fewer than $2^{\aleph_0}$ elements. Meanwhile Erd\H{o}s' question remained unanswered.

The threads regarding Wetzel's problem were only picked up again in 2017 by Ashutosh Kumar and Saharon Shelah who answered both questions above in the negative, albeit in a slightly non-satisfactory way, see \cite{Kumar2017}. They showed that there is no Wetzel family in the side-by-side Cohen model and provided a model with a Wetzel family (and hence a continuum, cf. Lemma~\ref{lem:sizeWetzel}) of cardinality $\aleph_{\omega_1}$. The singularity of $\aleph_{\omega_1}$ is quite crucial in their argument and the result could not be generalized to other cardinals. Moreover, their model necessarily fails to satisfy Martin's Axiom.

It seems that after \cite{Kumar2017}, interest in Wetzel's problem has grown. We are aware of two more papers dealing with it since then, a formalisation of Erd\H{o}s' proof, \cite{Paulson}, and a proof that the continuum hypothesis implies the existence of sparse analytic systems, \cite{Cody}.\footnote{We have been informed that the authors of \cite{Cody} have been unaware of \cite{Kumar2017}, and were motivated rather by Erdős' original paper.} But no one yet addressed the open question which Kumar and Shelah ask at the end of \cite{Kumar2017}, of whether the existence of a Wetzel family is consistent with a continuum of cardinality $\aleph_2$. Erd\H{o}s' proof relied on the fact that any countable dense set of complex numbers is universal for countable sets via entire functions, that is to say that any countable set may be mapped into it via a non-constant entire function (see Definition~\ref{def:univ} and Proposition~\ref{prop:countableuniv}). In fact one finds a few papers from the sixties and seventies of the last century studying similar yet slightly stronger mapping properties, cf. \cite{967M1, 970BS0, 972BS0, 974SR0, 976NT0}. Kumar and Shelah observed that a Wetzel family would exist in a model of $2^{\aleph_0} = \aleph_2$ in which there is a set of cardinality $\aleph_1$ universal in the sense above for sets of cardinality $\aleph_1$ of complex numbers.

The main result of our paper is Theorem~\ref{thm:main}, that shows that starting from a model of the generalised continuum hypothesis and any cardinal $\kappa$ of uncountable cofinality, there is a cardinal and cofinality preserving forcing extension with a Wetzel family of size $\kappa$. In particular this completely solves Kumar and Shelah's open problem by showing that Wetzel families put no further restriction on the size of the continuum. Moreover, for regular $\kappa$ we can also force Martin's axiom. We also study the notion of universality from above and show that while $\MA$ precludes the existence of sufficiently universal sets, they can consistently exist while $2^{\aleph_0} = \aleph_2$.

While some basic knowledge of set theory is needed to understand the main results, a large part of the arguments (with an exception of those in Section~\ref{subsec:Baumgartner} and \ref{sec:properuniv}) is more analytic than set theoretic.

The paper is organized as follows. In the following first section we review some of the preliminaries in complex analysis and forcing that are used in the paper. In the next section, Section~\ref{sec:wetzelintro}, we introduce Wetzel families and universal sets and prove some $\ZFC$ results about them. Among other things, we show that Wetzel families must have cardinality $2^{\aleph_0}$, we provide a proof of Erd\H{o}s' result on countable dense sets and we show that universal sets imply the existence of Wetzel families. In Section~\ref{subsec:Baumgartner}, we show how to force a certain family of strongly almost disjoint functions that serves as a basic (and somewhat necessary) ingredient in the proof of the main result. In fact, it turns out that this solves \cite[Question 22]{Zapletal1997}. We also obtain some interesting additional results related to $\MA$ that shine some light on one of our open questions. This section can be read completely independently from the rest the paper. Section~\ref{sec:wetzelcontinuum} is the longest and contains the proof the main result, Theorem~\ref{thm:main}. In Section~\ref{sec:mauniv}, we show that universal sets do not exist under $\MA + \neg \CH$. As a corollary we obtain that the converse of Proposition~\ref{prop:univwetzel}, namely the statement that Wetzel families imply the existence of a universal set, does not hold. In the next Section, we then show that a universal set, as suggested by Kumar and Shelah, can consistently exist with continuum $\aleph_2$. This uses a proper forcing, based on some of the previous arguments, with pairs of models as side-conditions. We finish the paper with a list of open problems.  

\section{Preliminaries}

\subsection{Complex analysis}

Throughout the paper, $\mathbb{C}$ denotes the set of complex numbers. A function $f \colon \mathbb{C} \to \mathbb{C}$ is \emph{entire} if it is holomorphic on the domain $\mathbb{C}$, in other words, its complex derivative $f'$ exists in every point $z \in \mathbb{C}$. The set of entire functions will be denoted $\Hol$. 

\begin{prop}[see {\cite[Theorem 10.18]{Rudin87}}]\label{prop:constantaccum}
    Let $f,g \in \Hol$ and assume that the set $\{ z \in \mathbb{C} : f(z) = g(z) \}$ has an accumulation point. Then $f=g$.
\end{prop}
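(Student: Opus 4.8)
The plan is to prove the identity theorem for holomorphic functions from the more basic fact that a holomorphic function with a zero of infinite order (equivalently, all of whose derivatives vanish at a point) on a connected domain must be identically zero. Setting $h = f - g$, which is entire, the hypothesis becomes that the zero set $Z = \{z \in \mathbb{C} : h(z) = 0\}$ has an accumulation point $z_0$, and the goal is $h \equiv 0$.

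First I would observe that $h$ is continuous, so $z_0 \in Z$, i.e. $h(z_0) = 0$. Expanding $h$ in its Taylor series about $z_0$ (valid on all of $\mathbb{C}$ since $h$ is entire), write $h(z) = \sum_{n \geq 0} c_n (z - z_0)^n$. The key step is to show all $c_n = 0$. Suppose not, and let $m$ be least with $c_m \neq 0$; then $h(z) = (z - z_0)^m g_0(z)$ where $g_0(z) = \sum_{n \geq 0} c_{m+n}(z-z_0)^n$ is entire with $g_0(z_0) = c_m \neq 0$. By continuity of $g_0$, there is a punctured neighbourhood of $z_0$ on which $g_0$ is nonzero, hence on which $h$ is nonzero except possibly at $z_0$ itself. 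This contradicts $z_0$ being an accumulation point of $Z$. Therefore $h$ vanishes to infinite order at $z_0$.

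Next I would upgrade this local vanishing to global vanishing using connectedness of $\mathbb{C}$. Let $A$ be the set of points at which $h$ vanishes to infinite order (all Taylor coefficients zero). We have just shown $z_0 \in A$, so $A \neq \emptyset$. The set $A$ is closed, being the intersection over $n$ of the closed sets $\{z : h^{(n)}(z) = 0\}$. It is also open: if $w \in A$, the Taylor series of $h$ about $w$ is identically zero, so $h$ vanishes on a whole disc around $w$, and every point of that disc lies in $A$. Since $\mathbb{C}$ is connected and $A$ is nonempty, clopen, we get $A = \mathbb{C}$, hence $h \equiv 0$ and $f = g$.

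I do not expect any serious obstacle here; this is the standard proof of the identity theorem, and the paper explicitly cites \cite[Theorem 10.18]{Rudin87} for it. The only point requiring a little care is justifying that the Taylor expansion about $z_0$ converges on all of $\mathbb{C}$ (which uses that $h$ is entire, so the radius of convergence is infinite) and that a convergent power series which is not identically zero has isolated zeros near its center — but both are routine consequences of standard power series theory. In a self-contained write-up one could instead simply invoke the cited theorem and give the one-line reduction via $h = f - g$.
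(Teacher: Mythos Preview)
Your proof is correct and is the standard argument for the identity theorem. The paper does not give its own proof of this proposition; it simply states it with a reference to \cite[Theorem 10.18]{Rudin87}, whose proof is essentially the one you have written (reduce to $h=f-g$, show the zero is of infinite order via the power series factorisation, then use that the set of infinite-order zeros is nonempty and clopen in the connected domain). So there is nothing to compare: you have supplied exactly the proof the paper defers to the literature.
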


For $z \in \mathbb{C}$ and $\delta$ a positive real number, we let $$B_\delta(z) = \{ z' \in \mathbb{C} : \vert z - z' \vert < \delta \}$$ be the ball of radius $\delta$ around $z$. We also define the semi-norms $$\| f \|_\delta = \sup_{z \in B_\delta(0)} \vert f(z) \vert.$$ 
Recall that a sequence of functions $\bar f = \langle f_n : n \in \omega \rangle$ on $\mathbb{C}$ is said to converge uniformly on compact sets, if for every $\delta > 0$, $\bar f$ converges uniformily on $B_\delta(0)$, i.e. for every $\varepsilon > 0$, there is $N \in \omega$ so that for all $n_0,n_1 > N$, $$\| f_{n_0} - f_{n_1} \|_\delta < \varepsilon.$$ Among the most useful facts about entire functions that we will use is the following.

\begin{prop}[see {\cite[Theorem 10.28]{Rudin87}}]\label{prop:convergence}
Let $\langle f_n : n \in \omega\rangle$ be a sequence of entire functions that converges uniformily on every compact set. Then the pointwise limit $f$ of $\langle f_n : n \in \omega\rangle$ is entire.

Moreover, the sequence of derivatives $\langle f'_n : n \in \omega \rangle$ converges uniformily on every compact set to $f'$.
\end{prop}

\subsection{Forcing}

Here we review a few standard facts about forcing that we will find useful. We use standard forcing notation as used in the reference books \cite{Jech1997} or \cite{Kunen2011}.

\begin{lemma}[{\cite[Lemma V.3.9, V.3.10]{Kunen2011}}]\label{lem:ccctwostep}
    Let $\mathbb{P}*\dot{\mathbb{Q}}$ be a two step iteration. Then $\mathbb{P}*\dot{\mathbb{Q}}$ is ccc iff $\mathbb{P}$ is ccc and $\Vdash_{\mathbb{P}} \dot{\mathbb{Q}} \text{ is ccc}$.
\end{lemma}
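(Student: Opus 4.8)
The plan is to prove the two directions separately by transferring antichains back and forth between $\mathbb{P}*\dot{\mathbb{Q}}$ and its two factors, which is the standard argument for preservation of the ccc under two-step iteration. I assume the iteration is given in the usual coordinatised form, with conditions $(p,\dot q)$ where $p \in \mathbb{P}$ and $p \Vdash_{\mathbb{P}} \dot q \in \dot{\mathbb{Q}}$.

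For the forward direction I would assume $\mathbb{P}*\dot{\mathbb{Q}}$ is ccc. That $\mathbb{P}$ is ccc is immediate: if $\{p_\alpha : \alpha<\omega_1\}$ were an antichain in $\mathbb{P}$, then $\{(p_\alpha,\mathbbm{1}) : \alpha<\omega_1\}$ would be an antichain in $\mathbb{P}*\dot{\mathbb{Q}}$, since any common lower bound $(r,\dot s)$ of $(p_\alpha,\mathbbm{1})$ and $(p_\beta,\mathbbm{1})$ gives $r\le p_\alpha,p_\beta$. For the second assertion, suppose toward a contradiction that some $p\in\mathbb{P}$ forces that $\dot{\mathbb{Q}}$ has an antichain of size $\omega_1$. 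Since $\mathbb{P}$ is ccc it preserves $\omega_1$, so we may fix $\mathbb{P}$-names $\dot q_\alpha$ ($\alpha<\omega_1$) with $p\Vdash$ ``$\langle \dot q_\alpha : \alpha<\omega_1\rangle$ enumerates pairwise incompatible elements of $\dot{\mathbb{Q}}$''. Then $\{(p,\dot q_\alpha):\alpha<\omega_1\}$ is an uncountable antichain in $\mathbb{P}*\dot{\mathbb{Q}}$: a common lower bound $(r,\dot s)$ of $(p,\dot q_\alpha)$ and $(p,\dot q_\beta)$ satisfies $r\le p$ and $r\Vdash \dot s\le\dot q_\alpha\wedge\dot s\le\dot q_\beta$, so $r$ forces $\dot q_\alpha$ and $\dot q_\beta$ to be compatible while $r\le p$ forces them to be incompatible --- a contradiction.

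For the backward direction I would assume $\mathbb{P}$ is ccc and $\Vdash_{\mathbb{P}}\dot{\mathbb{Q}}$ is ccc, and suppose toward a contradiction that $\{(p_\alpha,\dot q_\alpha):\alpha<\omega_1\}$ is an antichain in $\mathbb{P}*\dot{\mathbb{Q}}$. The key step is the following observation about a generic $G\subseteq\mathbb{P}$: letting $A_G=\{\alpha<\omega_1:p_\alpha\in G\}$, the family $\{\dot q_\alpha^{\,G}:\alpha\in A_G\}$ is an antichain in $\dot{\mathbb{Q}}^G$. Indeed, if $\alpha,\beta\in A_G$ and $\dot q_\alpha^{\,G},\dot q_\beta^{\,G}$ had a common lower bound in $\dot{\mathbb{Q}}^G$, then some $r\in G$ below both $p_\alpha$ and $p_\beta$ would force the existence of such a bound, yielding a common lower bound of $(p_\alpha,\dot q_\alpha)$ and $(p_\beta,\dot q_\beta)$. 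By ccc-ness of $\dot{\mathbb{Q}}^G$ it follows that $A_G$ is countable in $V[G]$, and since $G$ was arbitrary, $\Vdash_{\mathbb{P}}$ ``$\dot A$ is countable'', where $\dot A$ names $\{\alpha<\omega_1:p_\alpha\in\dot G\}$.

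It then remains to derive a contradiction using only that $\mathbb{P}$ is ccc. I would fix a $\mathbb{P}$-name $\dot g$ for a function $\omega\to\omega_1$ whose range contains $\dot A$. For each $n\in\omega$, a maximal antichain of conditions each deciding the value of $\dot g(n)$ is countable, so there is a countable set $S_n\subseteq\omega_1$ with $\Vdash_{\mathbb{P}}\dot g(n)\in S_n$; hence $S:=\bigcup_{n\in\omega}S_n$ is countable and $\Vdash_{\mathbb{P}}\dot A\subseteq S$. Picking $\alpha\in\omega_1\setminus S$ and a generic $G$ with $p_\alpha\in G$, we get $\alpha\in A_G\subseteq S$, a contradiction. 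The only points requiring mild care are the name bookkeeping in the forward direction --- which uses that a ccc forcing does not collapse $\omega_1$, so that ``having an antichain of size $\omega_1$'' can be witnessed by an $\omega_1$-enumeration --- and the fact that ``ccc'' for $\dot{\mathbb{Q}}$ is being evaluated with respect to the correct $\omega_1$, which is unproblematic precisely because $\mathbb{P}$ is ccc. I expect this name-handling to be the most technical, if least conceptually deep, part of the argument.
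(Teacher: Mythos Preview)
Your argument is correct and is exactly the standard textbook proof. Note, however, that the paper does not supply its own proof of this lemma at all: it is stated with a citation to \cite[Lemma V.3.9, V.3.10]{Kunen2011} and used as a black box, so there is nothing to compare against beyond saying that what you wrote is essentially what one finds in Kunen.
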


Note of course, that if $\dot{\mathbb{Q}}$ is in fact of the form $\check{\mathbb{Q}}$ for some ground model forcing $\mathbb{Q}$, then also $\mathbb{P} *\dot{\mathbb{Q}}$ is ccc iff $\mathbb{P} \times \mathbb{Q}$ is.

\begin{definition}\label{def:subforcing}
    Let $\mathbb{P}, \mathbb{Q}$ be forcing notions. Then $\mathbb{P}$ is a subforcing of $\mathbb{Q}$ if $\mathbb{P} \subseteq \mathbb{Q}$ and the extension as well as the incompatibility relations agree. 
\end{definition}

\begin{definition}
    Let $M\subseteq V$ be a transitive model of $\ZF^-$ (possibly a proper class).\footnote{$\ZF^-$ is $\ZF$ without the Powerset Axiom.} Let $\mathbb{P} \in M$ be a subforcing of $\mathbb{Q}$. Then we write $\mathbb{P} \lessdot_M \mathbb{Q}$ to say that every predense set $E \in M$ of $\mathbb{P}$ is predense in $\mathbb{Q}$. We write $\mathbb{P} \lessdot \mathbb{Q}$ for $\mathbb{P} \lessdot_V \mathbb{Q}$ and say that $\mathbb{P}$ is a complete subforcing of $\mathbb{Q}$.
\end{definition}

\begin{lemma}[{\cite[Theorem 6.3]{SolovayTennenbaum}}]\label{lem:directccc}
    The iterative direct limit of ccc forcings is ccc. To be more precise, suppose $\langle \mathbb{P}_\delta : \delta \leq \alpha \rangle$ is a sequence of posets, $\alpha$ limit, so that 
    \begin{enumerate}
        \item for all $\gamma \leq \delta\leq \alpha$, $\mathbb{P}_\gamma \lessdot \mathbb{P}_\delta$, 
        \item for every limit $\delta \leq \alpha$, $\bigcup_{\gamma < \delta } \mathbb{P}_\gamma$ is dense in $\mathbb{P}_\delta$, 
        \item and for all $\delta < \alpha$, $\mathbb{P}_\delta$ is ccc.
    \end{enumerate}
    Then also $\mathbb{P}_\alpha$ is ccc.
\end{lemma}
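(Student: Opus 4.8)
The plan is to prove Lemma~\ref{lem:directccc} by the standard $\Delta$-system argument adapted to the iterative direct-limit setting. Suppose toward a contradiction that $A \subseteq \mathbb{P}_\alpha$ is an antichain of size $\aleph_1$. By hypothesis~(2) applied at the limit $\alpha$ (if $\alpha$ is a limit; if $\alpha$ is a successor we are in the trivial case $\mathbb{P}_\alpha = \mathbb{P}_{\alpha-1}$ up to density, or rather the claim is immediate), the set $\bigcup_{\gamma < \alpha} \mathbb{P}_\gamma$ is dense in $\mathbb{P}_\alpha$, so we may first replace each element of $A$ by a stronger condition lying in some $\mathbb{P}_{\gamma}$ with $\gamma < \alpha$; this preserves the property of being an antichain because, under $\mathbb{P}_\gamma \lessdot \mathbb{P}_\alpha$, incompatibility in $\mathbb{P}_\gamma$ need not transfer downward, so I must be slightly careful --- the right move is: strengthening preserves antichains outright (a refinement of an antichain to stronger conditions is still an antichain), so after refining we get an antichain $A' = \{ p_\xi : \xi < \omega_1 \}$ with each $p_\xi \in \mathbb{P}_{\gamma_\xi}$, $\gamma_\xi < \alpha$.

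Next I would use the regularity of $\aleph_1$ together with $\operatorname{cf}(\alpha)$. If $\operatorname{cf}(\alpha) > \omega_1$ — in particular if $\operatorname{cf}(\alpha) = \omega_1$ we need a touch more care — there is a single $\gamma^* < \alpha$ with $\gamma_\xi \leq \gamma^*$ for uncountably many $\xi$, because the map $\xi \mapsto \gamma_\xi$ cannot be cofinal in an ordinal of uncountable cofinality when restricted to an uncountable set unless $\operatorname{cf}(\alpha) = \omega_1$. Handling $\operatorname{cf}(\alpha) = \omega$ and $\operatorname{cf}(\alpha) = \omega_1$ is exactly where a naive pigeonhole fails, and this is the main obstacle: when $\operatorname{cf}(\alpha) \le \omega_1$ one cannot pin the whole uncountable antichain into a single $\mathbb{P}_\gamma$. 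The standard fix is to observe that hypothesis~(2) already handles $\operatorname{cf}(\alpha) = \omega$ via a diagonal/density argument one level up, but cleanly: one reduces to proving the statement for $\operatorname{cf}(\alpha) = \omega_1$ and $\operatorname{cf}(\alpha) = \omega$ separately. Actually the slickest route avoids the case split: fix $A' = \{p_\xi : \xi < \omega_1\}$ refined into $\bigcup_{\gamma<\alpha}\mathbb{P}_\gamma$ as above; since each $p_\xi$ lands in some $\mathbb{P}_{\gamma_\xi}$, pick by recursion an increasing continuous sequence and invoke that an $\omega_1$-sized subset must be cofinal-free, i.e. bounded, precisely when $\operatorname{cf}(\alpha)\neq\omega_1$. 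For $\operatorname{cf}(\alpha)=\omega_1$ one instead uses that $\bigcup_{\gamma<\alpha}\mathbb{P}_\gamma$ with the chain of complete embeddings is itself ccc by the inductive hypothesis applied along a cofinal $\omega_1$-chain, together with a reflection/Fodor-type argument showing a stationary set of $\xi$ with $p_\xi \in \mathbb{P}_{\gamma_\delta}$ for the $\delta$-th member of the chain at which things stabilize.

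Once we have uncountably many $p_\xi$ inside a fixed $\mathbb{P}_{\gamma^*}$ with $\gamma^* < \alpha$: these still form an antichain in $\mathbb{P}_{\gamma^*}$. Indeed if $p_\xi, p_\eta$ were compatible in $\mathbb{P}_{\gamma^*}$ via a common extension $q$, then since $\mathbb{P}_{\gamma^*} \lessdot \mathbb{P}_\alpha$ (hypothesis~(1)) $q$ remains a condition of $\mathbb{P}_\alpha$ extending both, contradicting that $A'$ is an antichain in $\mathbb{P}_\alpha$ — here I use that $\lessdot$ preserves the extension relation (subforcing, Definition~\ref{def:subforcing}) and that a common lower bound in the subforcing is a common lower bound in the big forcing. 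But then $\mathbb{P}_{\gamma^*}$ has an uncountable antichain, contradicting hypothesis~(3). This completes the contradiction and shows $\mathbb{P}_\alpha$ is ccc.

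I should also make explicit the induction: the lemma is proved by induction on $\alpha$. For successor $\alpha$ there is nothing new (the limit stages are the content). For limit $\alpha$, the inductive hypothesis gives that every $\mathbb{P}_\delta$, $\delta < \alpha$, is ccc (which is actually already hypothesis~(3), so no induction is strictly needed for the conclusion at $\alpha$ itself) — the real work is entirely the $\Delta$-system/pigeonhole reduction of an antichain in the direct limit to an antichain in a proper initial segment. So the structure is: (i) refine the putative antichain into $\bigcup_{\gamma<\alpha}\mathbb{P}_\gamma$ using density; (ii) bound an uncountable subfamily inside a single $\mathbb{P}_{\gamma^*}$ using the cofinality analysis of $\alpha$ against $\aleph_1$ — the crux; (iii) transfer the antichain property down via $\lessdot$; (iv) contradict ccc of $\mathbb{P}_{\gamma^*}$. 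The genuinely delicate point, and the one I would spend the most care on, is step~(ii) in the case $\operatorname{cf}(\alpha) = \omega_1$, where boundedness of the uncountable index set fails and one must instead exploit the tower of complete embeddings — concretely, fixing a continuous cofinal $\langle \delta_i : i < \omega_1\rangle$ in $\alpha$, noting $\bigcup_i \mathbb{P}_{\delta_i}$ is dense in $\mathbb{P}_\alpha$, defining $f(\xi)$ to be the least $i$ with $p_\xi \in \mathbb{P}_{\delta_i}$ (after refinement), and applying Fodor's lemma on the club of closure points to find a stationary, hence uncountable, set of $\xi$ with $f(\xi) < \delta_{i_0}$ for a fixed $i_0$, so that all those $p_\xi$ lie in $\mathbb{P}_{\delta_{i_0}}$ — then apply steps~(iii)–(iv) with $\gamma^* = \delta_{i_0}$.
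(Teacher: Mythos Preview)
The paper does not give its own proof of this lemma; it is quoted as Theorem~6.3 of Solovay--Tennenbaum and used as a black box. So there is nothing to compare your argument against except the classical proof itself.

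Your treatment of the cases $\cf(\alpha)=\omega$ and $\cf(\alpha)>\omega_1$ is fine (both are straight pigeonhole: in the first case a countable cofinal sequence catches everything, in the second an $\omega_1$-sized set of indices is bounded below $\alpha$). The gap is in the case $\cf(\alpha)=\omega_1$, and it is a real one. Your plan is still to trap an uncountable piece of the antichain inside a single $\mathbb{P}_{\gamma^*}$, and you try to do this with Fodor applied to $f(\xi)=\min\{i:p_\xi\in\mathbb{P}_{\delta_i}\}$. But $f$ is not regressive, and restricting to the club of its closure points does not make it so: for $\eta$ in that club you only know $f(\xi)<\eta$ for $\xi<\eta$, not $f(\eta)<\eta$. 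More to the point, the conclusion you are aiming for is simply false in general: when $\cf(\alpha)=\omega_1$ an uncountable antichain can be distributed cofinally (think of $p_\xi\in\mathbb{P}_{\delta_{\xi+1}}\setminus\mathbb{P}_{\delta_\xi}$), so no uncountable subfamily sits inside a fixed $\mathbb{P}_{\gamma^*}$. Step~(ii) of your outline cannot be carried out in this case, and the rest of the argument never gets off the ground.

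What the Solovay--Tennenbaum proof actually uses here is the full strength of $\lessdot$, not just that compatibility transfers upward. One fixes a continuous cofinal $\langle\delta_i:i<\omega_1\rangle$ and a club $C$ with $\{p_\xi:\xi<i\}\subseteq\mathbb{P}_{\delta_i}$ for $i\in C$, and then for each $i\in C$ takes a \emph{reduction} $q_i\in\mathbb{P}_{\delta_i}$ of $p_i$ (equivalently, extends $\{p_\xi:\xi<i\}$ to a countable maximal antichain $B_i$ in $\mathbb{P}_{\delta_i}$, which by $\lessdot$ stays maximal in $\mathbb{P}_\alpha$). The contradiction is then extracted from the interaction of these reductions with the original $p_\xi$'s across different levels; the point is that maximal antichains, not merely antichains, are what $\lessdot$ transports, and this is exactly what your step~(iii) never invokes. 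Without that ingredient the $\cf(\alpha)=\omega_1$ case does not go through.
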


\begin{lemma}\label{lem:comsub}
    Let $\mathbb{P}$ be a complete subforcing of $\mathbb{Q}$, $\dot{\mathbb{A}}$ a $\mathbb{P}$-name and $\dot{\mathbb{B}}$ a $\mathbb{Q}$-name for a forcing notion. Then $$\Vdash_{\mathbb{Q}} \dot{\mathbb{A}} \lessdot_{V^\mathbb{P}} \dot{\mathbb{B}}  \text{ iff } \mathbb{P} * \dot{\mathbb{A}} \lessdot \mathbb{Q}* \dot{\mathbb{B}}.$$
\end{lemma}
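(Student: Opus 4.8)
This is the standard ``factoring'' lemma for two--step iterations. The plan is to set up a dictionary between predense subsets of an iteration $\mathbb{P} * \dot{\mathbb{A}}$ and $\mathbb{P}$--names for predense subsets of $\dot{\mathbb{A}}$, and then read off both implications; the only conceptual content is that a $\mathbb{P}$--name is evaluated in the same way by a $\mathbb{P}$--generic filter and by the generic filter it induces for a larger forcing. Before this, note that each side of the biconditional tacitly contains a ``subforcing'' clause ($\mathbb{P} * \dot{\mathbb{A}}$ is a subforcing of $\mathbb{Q} * \dot{\mathbb{B}}$, respectively $\Vdash_{\mathbb{Q}}$ ``$\dot{\mathbb{A}}$ is a subforcing of $\dot{\mathbb{B}}$''); given $\mathbb{P} \lessdot \mathbb{Q}$, these are routinely equivalent, using that $\mathbb{P} \lessdot \mathbb{Q}$ preserves the forcing relation for $\mathbb{P}$--names and that $G_{\mathbb{Q}} \cap \mathbb{P}$ is $\mathbb{P}$--generic over $V$ whenever $G_{\mathbb{Q}}$ is $\mathbb{Q}$--generic. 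So I concentrate on the completeness clauses.

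For $\mathcal{E} \subseteq \mathbb{P} * \dot{\mathbb{A}}$ put $\dot{\mathcal{E}}^{*} = \{ (\dot a, p) : (p, \dot a) \in \mathcal{E} \}$, a $\mathbb{P}$--name, so that $(\dot{\mathcal{E}}^{*})^{G} = \{ \dot a^{G} : (p, \dot a) \in \mathcal{E},\ p \in G \}$ for a $\mathbb{P}$--generic $G$. The dictionary to establish is: $\mathcal{E}$ is predense in $\mathbb{P} * \dot{\mathbb{A}}$ if and only if $\Vdash_{\mathbb{P}}$ ``$\dot{\mathcal{E}}^{*}$ is predense in $\dot{\mathbb{A}}$'' (and similarly with $(\mathbb{Q}, \dot{\mathbb{B}})$ in place of $(\mathbb{P}, \dot{\mathbb{A}})$). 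For the forward direction: were $\Vdash_{\mathbb{P}}$ ``$\dot{\mathcal{E}}^{*}$ is predense'' to fail, some $p_{0}$ would force that $\dot{\mathbb{A}}$ has an element incompatible with every member of $\dot{\mathcal{E}}^{*}$; the maximum principle then supplies a $\mathbb{P}$--name $\dot a$ with $(p_{0}, \dot a) \in \mathbb{P} * \dot{\mathbb{A}}$, and one checks that $(p_{0}, \dot a)$ is incompatible with every element of $\mathcal{E}$ --- contradicting predensity of $\mathcal{E}$. The converse uses only that a subset of a forcing is predense exactly when it meets every generic filter: for any $\mathbb{P} * \dot{\mathbb{A}}$--generic $G * H$, the hypothesis makes $(\dot{\mathcal{E}}^{*})^{G}$ predense in $\dot{\mathbb{A}}^{G}$, so $H$ meets it in some $\dot a_{1}^{G}$ with $(p_{1}, \dot a_{1}) \in \mathcal{E}$ and $p_{1} \in G$, whence $(p_{1}, \dot a_{1}) \in \mathcal{E} \cap (G * H)$.

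Granting the dictionary, fix a $\mathbb{Q}$--generic $G_{\mathbb{Q}}$ and put $G_{\mathbb{P}} = G_{\mathbb{Q}} \cap \mathbb{P}$. Assume first $\mathbb{P} * \dot{\mathbb{A}} \lessdot \mathbb{Q} * \dot{\mathbb{B}}$, and let $E \in V^{\mathbb{P}}$ be predense in $\dot{\mathbb{A}}^{G_{\mathbb{P}}}$, with $\mathbb{P}$--name $\dot E$ and (by the forcing theorem) $p_{0} \in G_{\mathbb{P}}$ forcing ``$\dot E$ is predense in $\dot{\mathbb{A}}$''. For $\mathcal{E} = \{ (p, \dot a) \in \mathbb{P} * \dot{\mathbb{A}} : p \le p_{0},\ p \Vdash_{\mathbb{P}} \dot a \in \dot E \}$, padding $\mathcal{E}$ by all conditions whose first coordinate is incompatible with $p_{0}$ yields a predense subset of $\mathbb{P} * \dot{\mathbb{A}}$, hence a predense subset of $\mathbb{Q} * \dot{\mathbb{B}}$ by hypothesis; since the padding part is incompatible with everything below $p_{0}$, this gives $p_{0} \Vdash_{\mathbb{Q}}$ ``$\dot{\mathcal{E}}^{*}$ is predense in $\dot{\mathbb{B}}$''. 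As every $(p, \dot a) \in \mathcal{E}$ has $p \in \mathbb{P}$ and $p \Vdash_{\mathbb{P}} \dot a \in \dot E$, and $\mathbb{P}$--names are evaluated identically by $G_{\mathbb{P}}$ and by $G_{\mathbb{Q}}$, we have $(\dot{\mathcal{E}}^{*})^{G_{\mathbb{Q}}} \subseteq \dot E^{G_{\mathbb{Q}}} = \dot E^{G_{\mathbb{P}}} = E$; since $p_{0} \in G_{\mathbb{Q}}$, it follows that $E$ is predense in $\dot{\mathbb{B}}^{G_{\mathbb{Q}}}$, as needed. Conversely, assume $\Vdash_{\mathbb{Q}} \dot{\mathbb{A}} \lessdot_{V^{\mathbb{P}}} \dot{\mathbb{B}}$ and let $\mathcal{E} \in V$ be predense in $\mathbb{P} * \dot{\mathbb{A}}$. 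The dictionary gives $\Vdash_{\mathbb{P}}$ ``$\dot{\mathcal{E}}^{*}$ is predense in $\dot{\mathbb{A}}$''; the set $(\dot{\mathcal{E}}^{*})^{G_{\mathbb{P}}}$ lies in $V^{\mathbb{P}}$, so by assumption it is still predense in $\dot{\mathbb{B}}^{G_{\mathbb{Q}}}$, and as it equals $(\dot{\mathcal{E}}^{*})^{G_{\mathbb{Q}}}$ this means $\Vdash_{\mathbb{Q}}$ ``$\dot{\mathcal{E}}^{*}$ is predense in $\dot{\mathbb{B}}$''; the dictionary for $\mathbb{Q} * \dot{\mathbb{B}}$ then yields that $\mathcal{E}$ is predense in $\mathbb{Q} * \dot{\mathbb{B}}$.

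I expect the dictionary to be the only real work, and within it the forward direction --- producing the offending $\mathbb{P}$--name by the maximum principle and verifying the incompatibility inside $\mathbb{P} * \dot{\mathbb{A}}$ --- to be the most delicate point; the padding/unpadding manoeuvre around $p_{0}$ in the first implication is a routine nuisance, as is the verification of the ``subforcing'' equivalence. Everything else is bookkeeping organised around the single observation that $\mathbb{P}$--names do not distinguish $G_{\mathbb{P}}$ from $G_{\mathbb{Q}}$, which is exactly what links the two sides of the equivalence.
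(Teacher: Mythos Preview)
Your proof is correct and follows essentially the same approach as the paper: the paper's proof consists of a single sentence asserting exactly your ``dictionary'' (that predense subsets of $\mathbb{P}*\dot{\mathbb{A}}$ correspond to $\mathbb{P}$-names for predense subsets of $\dot{\mathbb{A}}$) and then cites \cite{BrendleFischer} for one direction, whereas you have spelled out the details of both the dictionary and the two implications.
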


\begin{proof}
   It suffices to notice that a predense subset of $\mathbb{P}*\dot{\mathbb{A}}$ (and respectively of $\mathbb{Q}* \dot{\mathbb{B}}$), is precisely the same as a $\mathbb{P}$-name (or $\mathbb{Q}$-name) for a predense subset of $\dot{\mathbb{A}}$ (respectively $\dot{\mathbb{B}}$). For the direction from left to right, see also for example \cite[Lemma 13]{BrendleFischer}. 
\end{proof}

\section{Wetzel families and universal sets}\label{sec:wetzelintro}

\begin{definition}\label{def:Wetzel}
    A family $\mathcal{F} \subseteq \Hol$ of entire functions is called a \emph{Wetzel family} if for every $z \in \mathbb{C}$, $\vert \{ f(z) : f \in \mathcal{F} \} \vert < \vert \mathcal{F} \vert$.
\end{definition}

\begin{lemma}\label{lem:sizeWetzel}
If $\mathcal{F}$ is a Wetzel family, then $ \vert \mathcal{F} \vert = 2^{\aleph_0}$ and for every $\lambda < 2^{\aleph_0}$ and all but less than $2^{\aleph_0}$-many $z \in \mathbb{C}$, $\vert \{f(z) : f \in \mathcal{F} \} \vert \geq \lambda$.
\end{lemma}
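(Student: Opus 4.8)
The plan is to handle the two assertions in turn, first establishing $\vert \mathcal{F} \vert = 2^{\aleph_0}$ and then using it for the second part. I would start with the easy upper bound: fixing a countable dense set $D \subseteq \mathbb{C}$ (say $\mathbb{Q} + i\mathbb{Q}$), Proposition~\ref{prop:constantaccum} shows that an entire function is determined by its values on $D$, since $D$ has accumulation points; hence $\vert \Hol \vert \leq (2^{\aleph_0})^{\aleph_0} = 2^{\aleph_0}$, and a fortiori $\vert \mathcal{F} \vert \leq 2^{\aleph_0}$.

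The crucial observation, again from Proposition~\ref{prop:constantaccum}, is that for distinct $f, g \in \Hol$ the set $A_{f,g} = \{ z \in \mathbb{C} : f(z) = g(z) \}$ has no accumulation point in $\mathbb{C}$, i.e.\ is a closed discrete subset of $\mathbb{C}$; since $\mathbb{C}$ is separable, such a set is countable. Consequently, for any $\mathcal{G} \subseteq \Hol$ the set $B_\mathcal{G} = \bigcup \{ A_{f,g} : f \neq g \in \mathcal{G} \}$ has cardinality at most $\vert \mathcal{G} \vert^2 \cdot \aleph_0$, and for each $z \in \mathbb{C} \setminus B_\mathcal{G}$ the evaluation map $f \mapsto f(z)$ is injective on $\mathcal{G}$, whence $\vert \{ f(z) : f \in \mathcal{G} \} \vert = \vert \mathcal{G} \vert$.

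With this, the first assertion follows by contradiction: if $\vert \mathcal{F} \vert < 2^{\aleph_0}$ then $\vert B_\mathcal{F} \vert \leq \vert \mathcal{F} \vert^2 \cdot \aleph_0 < 2^{\aleph_0} = \vert \mathbb{C} \vert$, so there is $z \in \mathbb{C} \setminus B_\mathcal{F}$, and for this $z$ we get $\vert \{ f(z) : f \in \mathcal{F} \} \vert = \vert \mathcal{F} \vert$, contradicting that $\mathcal{F}$ is a Wetzel family; together with the upper bound this yields $\vert \mathcal{F} \vert = 2^{\aleph_0}$. For the second assertion, fix $\lambda < 2^{\aleph_0}$. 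Since $\vert \mathcal{F} \vert = 2^{\aleph_0} \geq \max(\lambda, \aleph_0)$, I would choose $\mathcal{F}_0 \subseteq \mathcal{F}$ with $\vert \mathcal{F}_0 \vert = \max(\lambda, \aleph_0)$; then for every $z \in \mathbb{C} \setminus B_{\mathcal{F}_0}$ we have $\vert \{ f(z) : f \in \mathcal{F} \} \vert \geq \vert \{ f(z) : f \in \mathcal{F}_0 \} \vert = \vert \mathcal{F}_0 \vert \geq \lambda$, so the exceptional set $\{ z \in \mathbb{C} : \vert \{ f(z) : f \in \mathcal{F} \} \vert < \lambda \}$ is contained in $B_{\mathcal{F}_0}$, which has cardinality $\leq \max(\lambda, \aleph_0) < 2^{\aleph_0}$.

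I do not anticipate a genuine obstacle here: everything rests on the identity theorem together with the separability of $\mathbb{C}$, and the only points demanding care are the cardinal arithmetic — using $\mu^2 \cdot \aleph_0 = \max(\mu, \aleph_0)$ and the fact that no $\mu < 2^{\aleph_0}$ equals $2^{\aleph_0} = \vert \mathbb{C} \vert$, which also quietly disposes of degenerate small values of $\vert \mathcal{F} \vert$ or $\lambda$ — and reading ``all but less than $2^{\aleph_0}$-many $z$'' correctly as the statement that the exceptional set of $z$ has size strictly below $2^{\aleph_0}$.
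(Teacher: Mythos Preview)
Your argument is correct and essentially identical to the paper's: both hinge on the identity theorem to bound the agreement set of two distinct entire functions to a countable set, then take the union over all pairs in $\mathcal{F}$ (respectively a subfamily of size $\lambda$) to find a point where evaluation is injective. The only difference is that you explicitly record the upper bound $\vert \Hol \vert \leq 2^{\aleph_0}$, which the paper leaves implicit, and you take $\mathcal{F}_0$ of size $\max(\lambda,\aleph_0)$ rather than $\lambda$ --- a harmless precaution, since even a finite $\lambda$ gives $\vert B_{\mathcal{F}_0}\vert \leq \aleph_0 < 2^{\aleph_0}$.
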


\begin{proof}
    Suppose towards a contradiction that $\vert \mathcal{F} \vert < 2^{\aleph_0}$. For any distinct $f,g \in \Hol$, $X_{f,g} = \{ z \in \mathbb{C} : f(z) = g(z)\}$ does not have an accumulation point by Proposition~\ref{prop:constantaccum} and thus must be countable. Thus, if $X = \bigcup_{f \neq g \in \mathcal{F}} X_{f,g}$, then $$\vert  X\vert \leq \vert \mathcal{F} \times \mathcal{F} \times \omega \vert < 2^{\aleph_0}.$$
    In particular, there is some $z \in \mathbb{C} \setminus X$. But then for any distinct $f,g \in \mathcal{F}$, $f(z) \neq g(z)$ and so $\vert \{ f(z) : f \in \mathcal{F} \}\vert = \vert \mathcal{F} \vert$, contradicting that $\mathcal{F}$ is Wetzel.
    
    Now let $\lambda < 2^{\aleph_0}$ and $\mathcal{G} \subseteq \mathcal{F}$ have size $\lambda$. Then if $X' =  \bigcup_{f \neq g \in \mathcal{G}} X_{f,g}$, as before $\vert X' \vert < 2^{\aleph_0}$ and for all $z \in \mathbb{C} \setminus X'$, $\vert \{ f(z) : f \in \mathcal{F} \} \vert \geq \vert \{ f(z) : f \in \mathcal{G} \} \vert \geq \lambda$.
\end{proof}

This lemma in fact shows that a Wetzel family induces a quite non-trivial combinatorial object, especially when $2^{\aleph_0} > \aleph_2$. Namely, consider an enumeration $\langle z_\alpha : \alpha < \kappa \rangle$ of $\mathbb{C}$. For each $\alpha < \kappa$, there is a bijection $e_\alpha \colon \{f(z_\alpha) :f \in \mathcal{F} \}\to \mu_\alpha$ for some cardinal $\mu_\alpha< 2^{\aleph_0}$. In this way, we can think of each $f \in \mathcal{F}$ as the function $\sigma_f \in \prod_{\alpha < \kappa} \mu_\alpha$, where $\sigma_f(\alpha) = e_\alpha(f(z_\alpha))$. At the same time, the elements of $\mathcal{F}$ and thus of $\{\sigma_f : f \in \mathcal{F} \}$ have pairwise countable intersections. 

Under $\CH$, this type of almost disjoint family of functions can be obtained quite easily. For instance, for any $\alpha < \omega_1$, simply let $\sigma_\alpha$ be constantly $0$ below $\alpha$ and constantly equal to $\alpha$ above $\alpha$. 

Also this is not particularily hard when $2^{\aleph_0} = \aleph_2$. Whenever we have constructed $\langle \sigma_\beta : \beta < \alpha \rangle$ for some $\alpha < \omega_2$, we can find a single function $\sigma \colon \alpha \to \omega_1$ that has countable intersection with all $\sigma_\beta$ using a standard diagonalization argument. Then we simply let $\sigma_\alpha$ equal $\sigma$ below $\alpha$ and constantly equal to $\alpha$ above $\alpha$.

For larger continuum, the existence of such families becomes much less clear, as a consequence of the larger gap between the countable size of the pairwise intersections and the size of the family and their elements. In fact, in Section~\ref{sec:wetzelcontinuum} we will show how to force these types of families, based on a techinque by Baumgartner. This will be a key starting point for the construction of a Wetzel family by forcing.

As a small observation of independent interest let us mention:

\begin{lemma}
    A Wetzel family cannot consist only of polynomials.
\end{lemma}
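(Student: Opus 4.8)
The plan is to argue by contradiction, exploiting the fact that a polynomial of bounded degree is pinned down by finitely many of its values. So suppose $\mathcal{F}$ were a Wetzel family all of whose members are polynomials. By Lemma~\ref{lem:sizeWetzel} we have $\vert \mathcal{F} \vert = 2^{\aleph_0}$. For $n \in \omega$ set $\mathcal{F}_n = \{ f \in \mathcal{F} : \deg f \leq n \}$, so that $\mathcal{F} = \bigcup_{n \in \omega} \mathcal{F}_n$. By K\"onig's theorem $\cf(2^{\aleph_0}) > \aleph_0$, so a countable union of sets each of size $< 2^{\aleph_0}$ cannot have size $2^{\aleph_0}$; hence there is some $n$ with $\vert \mathcal{F}_n \vert = 2^{\aleph_0}$, and I fix such an $n$.

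Next I would fix $n+1$ pairwise distinct complex numbers $z_0, \dots, z_n$. By Lagrange interpolation a polynomial of degree at most $n$ is uniquely determined by its values at $z_0, \dots, z_n$, so the map $f \mapsto (f(z_0), \dots, f(z_n))$ is injective on $\mathcal{F}_n$. Consequently
\[
2^{\aleph_0} = \vert \mathcal{F}_n \vert \leq \prod_{i \leq n} \vert \{ f(z_i) : f \in \mathcal{F}_n \} \vert \leq \prod_{i \leq n} \vert \{ f(z_i) : f \in \mathcal{F} \} \vert .
\]
But $\mathcal{F}$ is a Wetzel family, so each factor on the right is a cardinal strictly below $\vert \mathcal{F} \vert = 2^{\aleph_0}$, and a finite product of cardinals each below $2^{\aleph_0}$ is again below $2^{\aleph_0}$. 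This contradicts the displayed inequality, which finishes the proof.

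There is essentially no deep obstacle here; the only point that needs a moment's care is that nothing in the hypothesis bounds the degrees of the polynomials in $\mathcal{F}$ in advance. This is exactly what the uncountable cofinality of the continuum (K\"onig's theorem) is used for: it lets us pass to the subfamily $\mathcal{F}_n$ of uniformly bounded degree on which the interpolation argument can be run, trading one unbounded parameter for $n+1$ evaluation coordinates, each of which the Wetzel condition keeps small.
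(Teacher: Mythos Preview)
Your proof is correct and follows essentially the same line as the paper's: use the uncountable cofinality of $2^{\aleph_0}$ to pass to a subfamily of bounded degree, then use that $n+1$ values determine a polynomial of degree at most $n$ together with the Wetzel bound on each coordinate to contradict $\vert \mathcal{F}_n \vert = 2^{\aleph_0}$. The only cosmetic difference is that the paper thins to polynomials of a fixed degree $n$ rather than degree $\leq n$, and phrases the counting via restrictions rather than an explicit product of cardinals.
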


\begin{proof}
    Suppose $\mathcal{F}$ is such a family. Since $\vert \mathcal{F} \vert = 2^{\aleph_0}$ and $2^{\aleph_0}$ has uncountable cofinality, we can assume that all polynomials in $\mathcal{F}$ have the same degree $n$. Then pick any $n+1$-many points $a_0, \dots, a_n \in \mathbb{C}$. The set $\{ f \restriction \{ a_0, \dots, a_{n} \} : f \in \mathcal{F} \}$ has size less than $2^{\aleph_0}$, because $\mathcal{F}$ is Wetzel. But each $f \in \mathcal{F}$ is uniquely determined by $f \restriction \{a_0, \dots, a_n \}$ and so $\mathcal{F}$ has small size as well, contradicting Lemma~\ref{lem:sizeWetzel}.
\end{proof}

\begin{definition}\label{def:univ}
    We call a set $Y \subseteq \mathbb{C}$, where $\vert Y \vert < 2^{\aleph_0}$, \emph{universal (for entire functions)} if for any $X \subseteq \mathbb{C}$ with $\vert X \vert < 2^{\aleph_0}$, there is a non-constant $f \in \mathcal{H}(\mathbb{C})$, such that $f(X) \subseteq Y$.
\end{definition}

\begin{prop}[Erd\H{o}s, {\cite{Erdoes}}]\label{prop:countableuniv}
    Assuming $\CH$, any countable dense set is universal. 
\end{prop}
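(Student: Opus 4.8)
The plan is to realise $f$ as a limit, uniform on compact sets, of a sequence of polynomials that already take values in $Y$ on more and more points of $X$, so that Proposition~\ref{prop:convergence} delivers an entire limit. Note first that $\CH$ enters only to reduce to the case that $X$ is countable: since $\vert X\vert < 2^{\aleph_0} = \aleph_1$, the set $X$ is countable, and if it is finite the claim is an easy interpolation exercise (a dense subset of $\mathbb{C}$ is infinite, so one can route through distinct values of $Y$ and thereby keep $f$ non-constant). So assume $X = \{ x_n : n \in \omega\}$ with the $x_n$ pairwise distinct, and fix $y_0 \in Y$.

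Next I would fix an increasing sequence $\delta_n \to \infty$, say $\delta_n = n$, and build polynomials $p_n$ by recursion so that $p_n(x_i) \in Y$ for all $i \le n$ and $\| p_{n+1} - p_n\|_{\delta_{n+1}} < 2^{-(n+1)}$. Begin with $p_0(z) = y_0 + (z - x_0)$, which handles $x_0$ and satisfies $p_0(x_1) \neq y_0$. Given $p_n$, put
$$ p_{n+1}(z) = p_n(z) + c_{n+1}\prod_{i \le n}(z - x_i); $$
the added term vanishes at $x_0,\dots,x_n$, so the already-fixed values $p_{n+1}(x_i) \in Y$ for $i\le n$ are preserved. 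Since $\prod_{i\le n}(x_{n+1}-x_i)\neq 0$ and $Y$ is dense in $\mathbb{C}$, I choose $y_{n+1}\in Y$ as close to $p_n(x_{n+1})$ as I like and set $c_{n+1} = (y_{n+1}-p_n(x_{n+1}))/\prod_{i\le n}(x_{n+1}-x_i)$, so that $p_{n+1}(x_{n+1}) = y_{n+1}\in Y$ and $\vert c_{n+1}\vert$ is arbitrarily small --- in particular small enough that $\vert c_{n+1}\vert\prod_{i\le n}(\delta_{n+1}+\vert x_i\vert) < 2^{-(n+1)}$, which bounds $\| p_{n+1}-p_n\|_{\delta_{n+1}}$ as required. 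Only at the step $n+1=1$ do I additionally demand $y_1\neq y_0$, which is possible because $p_0(x_1)\neq y_0$, so any $y_1 \in Y$ sufficiently close to $p_0(x_1)$ differs from $y_0$.

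Because $\delta_n\to\infty$, for each $\delta>0$ we have $\| p_{n+1}-p_n\|_\delta < 2^{-(n+1)}$ for all large $n$, so $(p_n)_{n\in\omega}$ is Cauchy in every semi-norm $\|\cdot\|_\delta$ and hence converges uniformly on every compact set; by Proposition~\ref{prop:convergence} the limit $f$ is entire. For each $i$ the sequence $(p_n(x_i))_n$ is eventually constant equal to $y_i$, hence $f(x_i) = y_i \in Y$, i.e. $f(X)\subseteq Y$; and $f(x_0) = y_0 \neq y_1 = f(x_1)$, so $f$ is non-constant, as desired.

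The one genuine obstacle is the tension between hitting values of $Y$ \emph{exactly} and obtaining $f$ only as a limit of the $p_n$. The telescoping form $p_{n+1}-p_n = c_{n+1}\prod_{i\le n}(z-x_i)$ is exactly what resolves it: it freezes the finitely many previously chosen values while density of $Y$ leaves enough slack to make the coefficients $c_{n+1}$ small enough for convergence uniformly on compacta. Everything else is routine bookkeeping with the semi-norms $\|\cdot\|_\delta$.
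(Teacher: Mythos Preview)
Your proof is correct and follows essentially the same approach as the paper: both build $f$ as a uniform-on-compacta limit of a telescoping sequence $f_{n+1} = f_n + c_{n+1}\prod_{i\le n}(z-x_i)$, using density of $Y$ to make the coefficients small while hitting prescribed values. The only cosmetic difference is that the paper secures non-constancy by making $f$ injective on all of $X$ (choosing each new value outside the finitely many previous ones), whereas you arrange just $f(x_0)\neq f(x_1)$; both work.
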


Let us write the argument for sake of completeness. In a way, the forcing notions we will use later mimic this construction. 

\begin{proof}
    Let $Y \subseteq \mathbb{C}$ be countable dense and $X \subseteq \mathbb{C}$ be arbitrary countable and enumerated as $\langle z_n : n \in \omega \rangle$. We recursively construct a sequence $\langle f_n : n \in \omega \rangle$ of entire functions that converges uniformily on compact sets. Start by simply letting $f_0$ be constantly $0$. Next, if $f_n$ has been defined, consider $$g_{n, \xi}(z) = \xi\prod_{m < n} (z - z_m).$$ Note that the set of zeros of $g_{n,\xi}$ is exactly $\{ z_m : m < n\}$, when $\vert \xi \vert > 0$. Then there is $\delta > 0$, so that $$\| g_{n,\xi} \|_n < \frac{1}{2^n},$$ for every $\xi \in B_\delta(0)$. Since $Y$ is dense, there is some $\xi \in B_\delta(0)$ so that $$f_{n}(z_n) + g_{n, \xi}(z_n) \in Y \setminus f_n[\{ z_m : m < n \}].$$ Then let $f_{n+1} = f_n + g_{n,\xi}$. Finally, let $f$ be the limit of $\langle f_n : n \in \omega \rangle$. By Proposition~\ref{prop:convergence}, $f$ is entire. Clearly $f(X) \subseteq Y$ since $f_m(z_n)$ remains constant for $m > n$ and equals $f_{n+1}(z_n) \in Y$. Moreover $f$ is injective on $X$, so definitely non-constant.
\end{proof}

\begin{prop}\label{prop:univsuccessor}
    Let $Y$ be a universal set. Then $\vert Y \vert^+ = 2^{\aleph_0}$ and in particular the continuum is a successor cardinal.  
\end{prop}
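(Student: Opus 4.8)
The plan is to show two inequalities: $|Y|^+ \geq 2^{\aleph_0}$ and $|Y|^+ \leq 2^{\aleph_0}$. The second is immediate since by Definition~\ref{def:univ} a universal set satisfies $|Y| < 2^{\aleph_0}$, hence $|Y|^+ \leq 2^{\aleph_0}$. So the whole content is in proving $|Y|^+ \geq 2^{\aleph_0}$, i.e.\ that there is no cardinal $\lambda$ with $|Y| \leq \lambda < 2^{\aleph_0}$ and $\lambda^+ < 2^{\aleph_0}$; equivalently, $2^{\aleph_0} \leq |Y|^+$.

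First I would argue that a universal set automatically yields a Wetzel family, and then invoke the structure forced by Lemma~\ref{lem:sizeWetzel}. More directly: suppose towards a contradiction that $|Y|^+ < 2^{\aleph_0}$, and pick any set $X \subseteq \mathbb{C}$ with $|X| = |Y|^+$ (possible since $|Y|^+ < 2^{\aleph_0} = |\mathbb{C}|$). By universality there is a non-constant entire $f$ with $f(X) \subseteq Y$. The key point is that a non-constant entire function is finite-to-one on any set with no accumulation point — more usefully, for each value $c \in \mathbb{C}$ the fibre $f^{-1}(\{c\})$ has no accumulation point (it is the zero set of the entire function $f - c$, which is not identically zero), hence is countable since $\mathbb{C}$ is separable. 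Therefore $|X| \leq |f(X)| \cdot \aleph_0 \leq |Y| \cdot \aleph_0 = |Y|$, since $Y$ is infinite (a universal set must be infinite — indeed a finite set cannot be universal, as one cannot map an infinite discrete-free set into it by a non-constant entire function whose fibres are countable while $X$ could be taken of size $\aleph_1$). This contradicts $|X| = |Y|^+ > |Y|$.

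Hence $|Y|^+ = 2^{\aleph_0}$, and since $|Y| < 2^{\aleph_0}$ it follows that $2^{\aleph_0}$ is the successor of the cardinal $|Y|$, in particular a successor cardinal. The main (minor) obstacle is the bookkeeping around whether $Y$ is infinite and the clean statement that each fibre of a non-constant entire function is countable; both follow from Proposition~\ref{prop:constantaccum} (applied to $f$ and the constant function $c$) together with separability of $\mathbb{C}$, exactly as in the proof of Lemma~\ref{lem:sizeWetzel}.
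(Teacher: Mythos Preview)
Your proof is correct and follows essentially the same approach as the paper. The paper phrases the core step via pigeonhole (some fibre $\{z \in X : f(z) = y\}$ must be uncountable, hence has an accumulation point, forcing $f$ constant), while you phrase it by counting (each fibre of a non-constant entire function is countable, so $|X| \leq |Y|\cdot\aleph_0$); these are contrapositives of one another, both resting on Proposition~\ref{prop:constantaccum}.
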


\begin{proof}
    Suppose that there is $X \subseteq \mathbb{C}$ uncountable with $\vert Y \vert < \vert X \vert < 2^{\aleph_0}$. If $f \in \Hol$ and $f''X \subseteq Y$, by the pigeonhole principle, there is $y \in Y$ such that $\{ z \in X : f(z) = y \}$ is uncountable. But then this set has an accumulation point and by Proposition~\ref{prop:constantaccum}, $f$ is constant.
\end{proof}

\begin{prop}\label{prop:univwetzel}
    If there is a universal set there is also a Wetzel family.
\end{prop}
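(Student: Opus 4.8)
The plan is to build the Wetzel family directly by transfinite recursion, feeding initial segments of an enumeration of $\mathbb{C}$ into the universal set. Write $\kappa = 2^{\aleph_0}$ and $\lambda = |Y|$; by Proposition~\ref{prop:univsuccessor} we have $\kappa = \lambda^+$, so in particular $\kappa$ is regular. Fix an enumeration $\langle z_\alpha : \alpha < \kappa \rangle$ of $\mathbb{C}$ and, for each $\alpha < \kappa$, set $X_\alpha = \{ z_\beta : \beta < \alpha \}$. Since $|X_\alpha| \le \lambda < \kappa = 2^{\aleph_0}$, universality of $Y$ yields a non-constant $f_\alpha \in \Hol$ with $f_\alpha(X_\alpha) \subseteq Y$. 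I claim $\mathcal{F} = \{ f_\alpha : \alpha < \kappa \}$ is a Wetzel family.

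First I would verify the pointwise estimate, which is immediate: given $z = z_\beta \in \mathbb{C}$, for every $\alpha > \beta$ we have $z_\beta \in X_\alpha$ and hence $f_\alpha(z_\beta) \in Y$, so $\{ f_\alpha(z_\beta) : \alpha < \kappa \}$ is contained in $Y \cup \{ f_\alpha(z_\beta) : \alpha \le \beta \}$, a set of cardinality at most $\lambda + |\beta| = \lambda < \kappa$. It then remains to show $|\mathcal{F}| = \kappa$, since the $f_\alpha$ need not be distinct a priori. Suppose some $g \in \Hol$ were equal to $f_\alpha$ for all $\alpha$ in an unbounded set $A \subseteq \kappa$. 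Then $g$ would map $\bigcup_{\alpha \in A} X_\alpha = \{ z_\beta : \beta < \sup A \} = \mathbb{C}$ into $Y$; but $g$ is non-constant, so by Proposition~\ref{prop:constantaccum} (applied to $g$ and each constant function) every fibre $g^{-1}(\{y\})$ is countable, whence $2^{\aleph_0} = |\mathbb{C}| \le \aleph_0 \cdot |g(\mathbb{C})|$ and therefore $|g(\mathbb{C})| = 2^{\aleph_0} > \lambda \ge |Y|$, a contradiction. Hence each set $\{ \alpha < \kappa : f_\alpha = g \}$ is bounded in $\kappa = \lambda^+$, so has size at most $\lambda$. If $|\mathcal{F}| \le \lambda$, then $\kappa$ would be a union of at most $\lambda$ sets each of size at most $\lambda$, contradicting its regularity; thus $|\mathcal{F}| = \kappa$, and $\mathcal{F}$ is Wetzel.

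I do not expect a serious obstacle here: all the real content is already packaged in Proposition~\ref{prop:univsuccessor} (universality forces $2^{\aleph_0}$ to be a successor, hence regular, cardinal) and in Proposition~\ref{prop:constantaccum} (a non-constant entire function has image of full cardinality $2^{\aleph_0}$). The only point requiring a little care is that the constructed functions could in principle collapse to fewer than $\kappa$ many, which is exactly what the image-size argument for non-constant entire functions rules out; both cardinal-arithmetic steps — the pointwise bound and the lower bound on $|\mathcal{F}|$ — then go through using only the regularity of $\kappa = \lambda^+$.
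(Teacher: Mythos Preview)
Your proof is correct and follows essentially the same approach as the paper's: enumerate $\mathbb{C}$, use universality to pick non-constant $f_\alpha$ mapping the initial segment into $Y$, bound the pointwise value set by $Y \cup \{f_\beta(z_\alpha) : \beta \le \alpha\}$, and use regularity of $\kappa = \lambda^+$ together with the fact that a non-constant entire function cannot map all of $\mathbb{C}$ into $Y$ to conclude $|\mathcal{F}| = \kappa$. The only cosmetic differences are the order of the two verifications and that you phrase the image-size obstruction via countable fibres rather than via the pigeonhole argument the paper invokes from Proposition~\ref{prop:univsuccessor}.
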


\begin{proof}
    Let $Y$ be universal and let $\langle z_\alpha : \alpha < \kappa \rangle$ enumerate $\mathbb{C}$. For each $\alpha < \kappa$, let $f_\alpha \in \Hol$ be non-constant such that $f_\alpha(\{ z_\beta : \beta < \alpha\}) \subseteq Y$. We claim that $\mathcal{F} = \{f_\alpha : \alpha < \kappa \}$ is a Wetzel family. 
    
    First of all, we note that $\vert \mathcal{F} \vert = \kappa$. By Proposition~\ref{prop:univsuccessor}, $\kappa$ is a successor and in particular regular. Thus, if $\vert \mathcal{F} \vert < \kappa$, there is an unbounded subset $S \subseteq \kappa$ so that $f_\alpha = f_\beta$, for all $\alpha, \beta \in S$. But then for any such $\alpha \in S$, $f_\alpha(\mathbb{C}) \subseteq Y$. This would imply that $f_\alpha$ is constant, as in the proof of Proposition~\ref{prop:univsuccessor}.

    Now let $z_\alpha \in \mathbb{C}$ be arbitrary. Then $\{ f(z_\alpha) : f \in \mathcal{F} \} \subseteq \{f_\beta(z_\alpha) : \beta \leq \alpha \} \cup Y$ and the right-hand-side has size less than $\kappa$. 
\end{proof}

\begin{cor}[Erd\H{o}s, {\cite{Erdoes}}]\label{cor:WetzelCH}
    There is a Wetzel family under $\CH$.
\end{cor}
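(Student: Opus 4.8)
The plan is to derive Corollary~\ref{cor:WetzelCH} by combining Proposition~\ref{prop:countableuniv} with Proposition~\ref{prop:univwetzel}. First I would observe that $\CH$ means $2^{\aleph_0} = \aleph_1$, so a set $Y \subseteq \mathbb{C}$ is universal (in the sense of Definition~\ref{def:univ}) precisely when $|Y| < \aleph_1$, i.e. $Y$ is countable, and the test sets $X$ to be considered are exactly the countable subsets of $\mathbb{C}$. Then I would invoke Proposition~\ref{prop:countableuniv}: under $\CH$, any countable dense subset of $\mathbb{C}$ — for instance $\mathbb{Q} + i\mathbb{Q}$, which is certainly countable and dense — is universal. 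Hence a universal set exists.

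The second and final step is a direct application of Proposition~\ref{prop:univwetzel}: since a universal set exists, there is a Wetzel family. This completes the proof. I would note, for emphasis, that by Lemma~\ref{lem:sizeWetzel} the resulting Wetzel family has cardinality $2^{\aleph_0} = \aleph_1$, which is consistent with the construction in Proposition~\ref{prop:univwetzel} (where $\kappa = |\mathbb{C}| = \aleph_1$ is the enumeration length).

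There is essentially no obstacle here: the corollary is an immediate consequence of the two preceding propositions once one unwinds what ``universal'' means under $\CH$. The only mild point worth stating explicitly is that Definition~\ref{def:univ} requires $|Y| < 2^{\aleph_0}$, so one must check that a countable dense set indeed satisfies this under $\CH$ — which it does, trivially, since $\aleph_0 < \aleph_1$. Everything substantive was already done in Erdős' argument reproduced in Proposition~\ref{prop:countableuniv} and in the short construction of Proposition~\ref{prop:univwetzel}.

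\begin{proof}
    Under $\CH$ we have $2^{\aleph_0} = \aleph_1$, so a set $Y \subseteq \mathbb{C}$ is universal in the sense of Definition~\ref{def:univ} if and only if it is countable and every countable $X \subseteq \mathbb{C}$ admits a non-constant $f \in \Hol$ with $f(X) \subseteq Y$. By Proposition~\ref{prop:countableuniv}, the countable dense set $\mathbb{Q} + i\mathbb{Q}$ is such a set. Hence a universal set exists, and by Proposition~\ref{prop:univwetzel} there is a Wetzel family. (By Lemma~\ref{lem:sizeWetzel} it has cardinality $2^{\aleph_0} = \aleph_1$.)
\end{proof}
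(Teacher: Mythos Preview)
Your proposal is correct and matches the paper's approach exactly: the corollary is stated immediately after Propositions~\ref{prop:countableuniv} and~\ref{prop:univwetzel} with no separate proof, precisely because it follows at once by combining them as you do.
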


\section{Strongly almost disjoint functions and Baumgartner's thinning-out forcing}\label{subsec:Baumgartner}

This section can be skipped entirely if one wants to pass directly to the proof of the main result. The main purpose is to prove Proposition~\ref{prop:Baumgartner} below which is used in the setup of our main forcing construction. It is an adaptation of Baumgartner's ``thining-out" technique to obtain certain types of almost disjoint families (see \cite{Baumgartner76}). To be more precise, we show how to obtain the type of family of functions necessitated by a Wetzel family, as shown in Section~\ref{sec:wetzelintro}, where pairwise intersections are finite. Interestingly, a different type of strongly almost disjoint family was also used in \cite{Kumar2017} to obtain a Wetzel family with continuum $\aleph_{\omega_1}$. It is unclear to us how this relates to our argument.

\begin{prop}\label{prop:Baumgartner}
    (GCH) Let $\kappa$ be an infinite cardinal of uncountable cofinality and for every $\alpha < \kappa$, let $\mu_\alpha = \max(\vert \alpha \vert, \aleph_0)$. Then there is a cardinal and cofinality preserving forcing extension of $V$ where $2^{\aleph_0} = \kappa$ and there is  $\langle \sigma_\alpha : \alpha < \kappa \rangle$, such that for all $\alpha < \beta < \kappa$, 
    \begin{enumerate}
        \item $\sigma_\alpha \in \prod_{\xi < \kappa} \mu_\xi$, 
        \item $\vert \sigma_\alpha \cap \sigma_\beta \vert < \omega$.
    \end{enumerate}

If $\kappa$ is regular, we additionally have that $\vert H(\kappa) \vert = \kappa$.
\end{prop}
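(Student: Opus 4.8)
The plan is to force with a finite-support product (or iteration) of Baumgartner-style ``thinning-out'' posets, one for each $\alpha < \kappa$, whose generic object is exactly the function $\sigma_\alpha$. Concretely, the conditions in the $\alpha$-th coordinate should be finite partial functions $p \colon \kappa \rightharpoonup \bigcup_\xi \mu_\xi$ with $p(\xi) \in \mu_\xi$, carrying a finite ``side condition'' (a finite set of indices $\beta < \alpha$ together with a promise about how large $\sigma_\alpha$ is already forced to agree with $\sigma_\beta$); extension is reverse inclusion in the obvious sense, with the side conditions guaranteeing that in the extension we never create a new point of agreement with a $\sigma_\beta$ listed in the side condition unless that agreement was already ``paid for''. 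This is precisely the mechanism that makes $\vert\sigma_\alpha\cap\sigma_\beta\vert<\omega$ hold generically for $\alpha>\beta$ (and symmetrically, since the relation is symmetric). One sets this up as a finite-support iteration $\langle \mathbb{P}_\alpha, \dot{\mathbb{Q}}_\alpha : \alpha \le \kappa\rangle$ of length $\kappa$, where $\dot{\mathbb{Q}}_\alpha$ is the $V^{\mathbb{P}_\alpha}$-version of the thinning-out poset adding $\sigma_\alpha$ while respecting all earlier $\sigma_\beta$.

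First I would verify the chain condition. The key point of Baumgartner's technique is a $\Delta$-system plus counting argument showing each $\mathbb{Q}_\alpha$ (hence, via Lemma~\ref{lem:directccc} and Lemma~\ref{lem:ccctwostep}, the whole iteration) is ccc — actually one typically gets more, a precalibre or Knaster-type property, which is what makes finite support work at limits of uncountable cofinality without collapsing cardinals. Here is where $\cf(\kappa) > \omega$ is used: a finite-support iteration of ccc (or $\sigma$-centered / Knaster) posets of length $\kappa$ with $\cf(\kappa)>\omega$ preserves all cardinals and cofinalities, and by GCH in the ground model a standard nice-name count gives $\vert\mathbb{P}_\kappa\vert = \kappa$, hence $2^{\aleph_0} \le \kappa$ in the extension; the $\kappa$ many mutually generic (by mutual Cohen-ness of distinct coordinates) reals coded by the $\sigma_\alpha$'s — or more directly the Cohen reals added at each coordinate — give $2^{\aleph_0}\ge\kappa$, so $2^{\aleph_0}=\kappa$. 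Properties (1) and (2) are then genericity facts: (1) is immediate from the definition of the poset, and (2) follows because for each pair $\alpha>\beta$ and each $n$, the set of conditions forcing ``$\sigma_\alpha$ and $\sigma_\beta$ disagree somewhere in $[n,\kappa)$ on their common domain'' — or rather, forcing an upper bound on $\vert\sigma_\alpha\cap\sigma_\beta\vert$ via the side-condition bookkeeping — is dense.

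For the final clause, when $\kappa$ is regular I want $\vert H(\kappa)\vert = \kappa$ in the extension. Since $\kappa$ is preserved and regular, $H(\kappa)$ in the extension consists of sets hereditarily of size $<\kappa$; each such set has a name of size $<\kappa$ (using that $\mathbb{P}_\kappa$ is ccc and $\vert\mathbb{P}_\kappa\vert=\kappa$, an inductive count on rank shows there are only $\kappa$-many relevant nice names), and GCH in the ground model together with $\kappa$ regular bounds $\vert H(\kappa)^V\vert \le \kappa$; combining gives $\vert H(\kappa)\vert=\kappa$ in $V[G]$. The main obstacle I expect is not any single step but rather getting the definition of the thinning-out poset exactly right so that the side conditions are finite, the ccc/Knaster argument goes through cleanly, \emph{and} the density argument for (2) actually bounds the intersections finitely rather than merely making them small — this is the delicate heart of Baumgartner's construction and is the part that genuinely requires care, especially in making the symmetric requirement (for $\alpha<\beta$ as well as $\alpha>\beta$) fall out of a one-sided bookkeeping scheme.
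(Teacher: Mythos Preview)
Your proposal has a genuine gap: the iterands $\mathbb{Q}_\alpha$ you describe are \emph{not} ccc. You want conditions to be finite partial functions $p$ with $p(\xi)\in\mu_\xi$, but for $\xi\ge\omega_1$ we have $\mu_\xi\ge\aleph_1$, and the conditions $\{(\xi,v)\}$ for $v<\omega_1$ already form an uncountable antichain --- this is just the familiar fact that $\operatorname{Fn}(I,\lambda)$ fails the ccc once $\lambda$ is uncountable. No finite side-condition apparatus rescues this, because the incompatibility occurs in the working part alone. (The ccc forcing that appears later in the paper under $\MA+2^{\aleph_0}=\aleph_2$ avoids exactly this obstacle by first diagonalizing to a function $S$ with $S(\xi)\in[\omega_1]^\omega$ \emph{countable}, and then forcing $\sigma(\xi)\in S(\xi)$; your description omits any such restriction, and for $\kappa>\aleph_2$ the diagonalization that produces $S$ does not obviously generalize.)

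More fundamentally, you have misidentified Baumgartner's thinning-out technique. It is not a ccc $\Delta$-system argument at all. The paper's forcing is a coherent product $\mathbb{P}\subseteq\prod_{\lambda\in K}\mathbb{P}_\lambda$ over the regular cardinals $\lambda\le\kappa$, where each $\mathbb{P}_\lambda$ is ${<}\lambda$-closed and approximates the $\sigma_\alpha$'s by sets of size ${<}\lambda$ (with intersections above $\lambda^-$ frozen). Cardinal preservation is obtained not from a global chain condition but by showing that for every regular $\lambda$ the whole poset factors as a ${<}\lambda^+$-closed forcing followed by a $\lambda^+$-cc forcing; the $\Delta$-system argument lives inside this latter $\lambda^+$-cc verification, at each level separately. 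The functions $\sigma_\alpha$ are then read off from the $\omega$-th layer $S^\omega_\alpha$. This layered structure --- not a ccc iteration --- is the point of the ``thinning out'' trick, and the paper explicitly remarks that the naive single-level poset $\mathbb{P}_\omega$ (which is close in spirit to your $\mathbb{Q}_\alpha$'s) is ``far from being ccc''. That the statement solves an open problem of Zapletal is a hint that the obvious finite-condition approach was not going to succeed.
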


\begin{proof}

    Let $S = \bigcup_{\xi \in [\omega,\kappa)} \{ \xi \} \times \mu_\xi \subseteq [\omega, \kappa) \times \kappa$ and let $K$ be the set of regular cardinals $\leq \kappa$. For every $\lambda \in K$ consider the forcing $\mathbb{P}_\lambda$ consisting of partial functions $p \colon \kappa \to [S]^{<\lambda}$, such that 
    \begin{enumerate}
        \item $\vert \dom p \vert < \lambda$,
        \item for all $\alpha, \beta \in \dom p$, $\proj p(\alpha) = \proj p(\beta)$, i.e. $p(\alpha)$ and $p(\beta)$ have the same projection to the first coordinate, 
        \item and for all $\alpha \in \dom p$ and $\xi \in \proj p(\alpha) \cap \lambda$, $\vert p(\alpha) \cap (\{\xi \} \times \mu_\xi) \vert = \mu_\xi$.
    \end{enumerate}    
We will simply write $\proj p$ to denote $\proj p(\alpha)$, for any $\alpha \in \dom p$, and if $\dom p = \emptyset$, we let $\proj p = \emptyset$. 
    
    For any $p,q \in \mathbb{P}_\lambda$, $q \leq p$ iff $\dom p \subseteq \dom q$ and for any distinct $ \alpha, \beta \in \dom p$, 

    \begin{enumerate}
        \item $p(\alpha) \subseteq q(\alpha)$,
        \item and $q(\alpha) \cap q(\beta) \cap ([\lambda^-, \kappa) \times \kappa) = p(\alpha) \cap p(\beta) \cap ([\lambda^-, \kappa) \times \kappa)$,
    \end{enumerate}
where $\lambda^-$ is the predecessor of $\lambda$, if $\lambda$ is a successor cardinal, or $\lambda^- = \lambda$, if $\lambda$ is a limit cardinal.

   Whenever $G$ is $\mathbb{P}_\lambda$-generic, let $S_\alpha = \bigcup_{p \in G} p(\alpha)$ for every $\alpha < \kappa$. Then it is not hard to check that every vertical section of $S_\alpha$ is non-empty (see more below) and for any $\alpha < \beta$, $$\vert S_\alpha \cap S_\beta \cap ([\lambda^-, \kappa) \times \kappa)\vert < \lambda.$$ In particular, when $\lambda$ is a successor cardinal, $$\vert S_\alpha \cap S_\beta \vert < \lambda.$$ In fact, for any $\xi \in [\omega, \lambda^-)$, the section with index $\xi$ of $S_\alpha$ equals $\mu_\xi$. The reason why we include these sections is purely notational.

   It is clear that $\mathbb{P}_\lambda$ is $<\lambda$-closed. Ideally we would like to force with $\mathbb{P}_\omega$, since if we then choose $\sigma_\alpha$, such that $(\omega + \xi, \sigma_\alpha(\xi)) \in S_\alpha$, for every $\xi < \kappa$, (1) and (2) of the proposition are satisfied. But $\mathbb{P}_\omega$ is far from being ccc. To circumvent this we use Baumgartner's thinning out trick. 

   Let $\mathbb{P} \subseteq \prod_{\lambda \in K} \mathbb{P}_\lambda$ consist of all $\bar p = \langle p_\lambda : \lambda \in K \rangle$ such that for $\lambda' < \lambda$, $\dom p_{\lambda'} \subseteq \dom p_{\lambda}$ and for any $\alpha \in \dom p_{\lambda'}$, $p_{\lambda'}(\alpha) \subseteq p_{\lambda}(\alpha)$. When $G$ is $\mathbb{P}$-generic, we obtain the sets $S^\lambda_\alpha = \bigcup_{\bar p \in G} p_\lambda(\alpha)$ for every $\lambda \in K$ and it is very easy to see again that $$\vert S^\lambda_\alpha \cap S^\lambda_\beta \cap([\lambda^-, \kappa) \times \kappa)\vert < \lambda,$$ for $\alpha < \beta < \kappa$. Let us check that all vertical sections of $S^\lambda_\alpha$ are non-empty. To this end let $\bar p \in \mathbb{P}$ be arbitrary. 

  \begin{claim}
      There is $\bar q \leq \bar p$ so that $\alpha \in \dom q_\lambda$.
  \end{claim}

  \begin{proof}
      If for all $\nu \in K$, $\alpha \notin \dom p_\nu$, we let $\dom q_\nu = \dom p_\nu \cup \{\alpha \}$, $q_\nu \restriction \dom p_\nu = p_\nu \restriction \dom p_\nu$ and $$q_\nu(\alpha) = \{ (\xi, 0) : \xi \in \proj p_\nu \setminus \nu \} \cup \bigcup_{\xi \in \proj p_\nu \cap \nu} \{\xi \} \times \mu_\xi,$$ for every $\nu \in K$. Otherwise, there is a minimal $\mu \in K$ so that $\alpha \in \dom p_\mu$. In this case, pick the least $a_\xi$ so that $(\xi, a_\xi) \in p_\mu(\alpha)$, for every $\xi \in \proj p_\mu$. Then for each $\nu \in K$, we let $q_\nu$ extend $p_\nu$ as before, but with $$ q_\nu(\alpha) = p_\nu(\alpha) \cup \{ (\xi, a_\xi) : \xi \in \proj p_\nu \setminus \nu \} \cup \bigcup_{\xi \in \proj p_\nu \cap \nu} \{\xi \} \times \mu_\xi.$$
  \end{proof}

  Now let $\xi \in [\omega, \kappa)$ be arbitrary. 

\begin{claim}
      There is $\bar q \leq \bar p$ so that $\xi \in \proj q_\lambda$.
  \end{claim}

  \begin{proof}
      If $\xi < \lambda^-$, simply extend every single $p_\nu(\beta)$, for $\nu \geq \lambda$ and $\beta \in \dom p_\nu$, by adding $\{ \xi\} \times \mu_\xi$. This works since pairwise intersections occurring below $\nu^- \geq \lambda^-$ do not matter when extending $p_\nu$.

      If $\xi \in [\lambda^-, \lambda)$, $\lambda$ is a successor and $\mu_\xi = \lambda^-$. Then we can assume already that for every $\nu \geq \lambda^+$ and $\beta \in \dom p_\nu$, $\{\xi\} \times \mu_\xi \subseteq p_\nu(\beta)$, using the same procedure as before. Since $\vert \dom p_\lambda \vert < \lambda$ and thus $\vert \dom p_\lambda \vert \leq \lambda^-$, it is easy to find pairwise disjoint sets $X_\beta \subseteq \mu_\xi = \lambda^-$ of size $\mu_\xi = \lambda^-$, for all $\beta \in \dom p_\lambda$. Simply extend each $p_\lambda(\beta)$ by adding $\{\xi\} \times X_\beta$.  

      Finally, if $\xi \geq \lambda$, we can assume already that $\xi \in \proj p_\nu$, for $\nu = \vert \xi \vert^+$, using what we have just shown. Then we can easily find pairwise distinct $a_\beta$, so that $(\xi, a_\beta) \in p_\nu(\beta)$, for all $\beta \in \dom p_\nu$, since $\vert \dom p_\nu \vert \leq \nu^- = \vert \xi \vert = \mu_\xi$ and $\vert p_\nu(\beta) \cap ( \{\xi \} \times \mu_\xi) \vert = \mu_\xi$, for every $\beta \in \dom p_\nu$. Now simply extend all $p_{\nu'}(\beta)$, for $\nu' \in K \cap [\lambda, \nu)$ and $\beta \in \dom p_{\nu'}$, by adding in $(\xi, a_\beta)$.
  \end{proof}
  
 The $\sigma_\alpha$ as defined above, for $S_\alpha = S_\alpha^\omega$, are then as required.

\begin{claim}\label{claim:Baumpreserv}
    $\mathbb{P}$ preserves all regular cardinals.
\end{claim}

\begin{proof}
    Let $\lambda \in K$. We will show how to factor $\mathbb{P}$ into a two step iteration of a $<\lambda^+$-closed and a $\lambda^+$-cc forcing. Let $\mathbb{P}_0 = \{ \bar p \restriction [\lambda^+, \kappa] : \bar p \in \mathbb{P} \}$ and note that $\mathbb{P}_0$ is $<\lambda^+$-closed and thus doesn't add sequences of length $\lambda$.\footnote{When $\lambda = \kappa$, then $\mathbb{P}_0$ is simply the trivial forcing.} Let $G_0$ be $\mathbb{P}_0$-generic over $V$. Let $S_\alpha = S_\alpha^{\lambda^+}$, for $\alpha < \kappa$, be defined as before, i.e. $$S_\alpha = \bigcup_{\bar p \in G_0} p_{\lambda^+}(\alpha).$$

    Let $\mathbb{P}_1$ consist of all $\bar p \restriction \lambda^+$, for $\bar p \in \mathbb{P}$, where $$p_{\lambda}(\alpha) \subseteq S_\alpha,$$ for each $\alpha \in \dom p_{\lambda}$. Then it is easy to verify that if $G_1$ is $\mathbb{P}_1$-generic over $V[G_0]$, $V[G_0][G_1]$ is a $\mathbb{P}$-generic extension of $V$.  Work in $V[G_0]$ and suppose that $\langle \bar p^i : i < \lambda^+ \rangle$ is an antichain in $\mathbb{P}_1$. 
    
    Note that for any $i < \lambda^+$, there is $\delta < \lambda$ so that $\dom p^i_{\lambda'}$ as well as $p^i_{{\lambda'}}(\alpha)$ are the same for all $\lambda' \in [\delta, \lambda) \cap K$ and fixed $\alpha \in \dom p^i_{\lambda}$, since these sets grow and are subsets of $\dom p^i_\lambda$ and $p^i_\lambda(\alpha)$ respectively which have size strictly less than $\lambda$.\footnote{Of course, if $\lambda$ is a successor cardinal this is trivial as we may choose $\delta$ such that $K \cap [\delta, \lambda)$ is empty. Otherwise, recall that $\lambda$ is regular.} We may assume that this $\delta$ is the same for all $i < \lambda^+$ and let $$D^i = \dom p^i_{{\lambda'}},$$ for any, equivalently all $\lambda' \in [\delta, \lambda) \cap K$, if this exists.\footnote{ If $K \cap [\delta, \lambda)$ is empty, $D^i$ is left undefined as it will be irrelevant.}
    
    Since $\lambda^{<\lambda}  =\lambda$, a $\Delta$-system argument lets us assume that there are sets $D$ and $E$ such that $D = D^i \cap D^j$ and $\dom p^i_\lambda \cap \dom p^j_\lambda = E$, for all $i<j < \lambda^+$. Note that we can also do the same for all $\lambda' \in \delta \cap K$ simultaneously, as $\delta < \lambda$. To be more precise we can assume $D_{\lambda'} = \dom p^i_{\lambda'} \cap \dom p^j_{\lambda'}$, for all $\lambda' \in \delta \cap K$ and  $i < j < \lambda^+$. Moreover, using another $\Delta$-system argument we can assume that $R^i_\lambda \cap R^j_\lambda = R$, for some fixed $R$ and all $i < j < \lambda^+$, where $R_\lambda^i = \bigcup_{\alpha \in \dom p^i_\lambda} p^i_{\lambda}(\alpha)$.

    Now comes the heart of the thinning out argument. Let $i < \lambda^+$ be arbitrary, then note that for any distinct $\alpha, \beta$ and any $\lambda'$, $p^i_{\lambda'}(\alpha) \cap S(\beta)$ is a $< \lambda$ sized subset of $S(\alpha) \cap S(\beta)$, which has size at most $\lambda$. Also $R$ has size $<\lambda$.~
    Thus we find $i <  j < \lambda^+$ such that $$p^i_{\lambda'}(\alpha) \cap S(\beta) = p^j_{\lambda'}(\alpha) \cap S(\beta)$$ and $$p^i_{\lambda'}(\alpha) \cap R = p^j_{\lambda'}(\alpha) \cap R,$$ for all $\lambda' \in \delta \cap K$ and distinct $\alpha, \beta \in D_{\lambda'}$, also for any, equivalently all, $\lambda' \in [\delta, \lambda) \cap K$ and distinct $\alpha, \beta \in D$, and for $\lambda' = \lambda$ and distinct $\alpha, \beta \in E$. It is straightforward to check that $\bar p^i$ and $\bar p^j$ are compatible. 

    Now let us check that all regular cardinals $\theta$ are preserved. Suppose that in $V^\mathbb{P}$, $\cf(\theta) = \mu < \theta$. If $\mu \geq \kappa$, then either $\kappa$ is regular and we have shown above that $\mathbb{P}$ is $\kappa^+$-cc, so in particular $\theta$-cc. Or $\kappa$ is singular, $\mu \geq \kappa^+$ and $\vert \mathbb{P}\vert \leq \kappa^+$, so $\mathbb{P}$ is $\kappa^{++}$-cc and also $\theta$-cc. If $\mu < \kappa$, $\mathbb{P}$ is the iteration of a $\mu^+$-closed and a $\mu^+$-cc forcing and $\theta \geq \mu^+$.
\end{proof}

It is clear that in $V^{\mathbb{P}}$, $2^{\aleph_0} \geq \kappa$ since $\{ \sigma_\alpha \restriction \omega : \alpha < \kappa \}$ is an almost disjoint set of functions from $\omega$ to $\omega$. In the other direction, note that $2^\lambda \leq \kappa$ for any regular $\lambda < \cf(\kappa)$. Namely $\mathbb{P}$ is $\mathbb{P}_0 * \dot{\mathbb{P}}_1$, where $\mathbb{P}_0$ doesn't add any new sequences of length $\lambda$ and ${\mathbb{P}}_1$ has size at most $\kappa$ and is $\lambda^+$-cc. A counting of names argument then shows that $2^{\lambda} \leq \kappa$. In particular $2^{\aleph_0} = \kappa$ and $ \vert H(\kappa) \vert = 2^{<\kappa} = \kappa$, when $\kappa$ is regular.   
\end{proof}

\begin{cor}
    The answer to \cite[Question 22]{Zapletal1997} is positive. Namely, assuming $\GCH$, it possible to add arbitrarily large strongly almost disjoint families of functions in $\aleph_\omega^{\aleph_{\omega+1}}$ without collapsing cardinals.
\end{cor}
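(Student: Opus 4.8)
The plan is to read this off directly from Proposition~\ref{prop:Baumgartner}: that proposition already produces, by a cardinal- and cofinality-preserving forcing over a model of $\GCH$, a family of functions with finite pairwise intersections whose coordinate bounds are exactly the cardinals $\mu_\xi=\max(|\xi|,\aleph_0)$, so the only thing left to do is to instantiate the parameter $\kappa$ so that the resulting family is large and its members live on the index set $\aleph_{\omega+1}$.

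Concretely, I would proceed as follows. Given any cardinal $\lambda$, choose $\kappa$ to be a cardinal of uncountable cofinality with $\kappa\ge\max(\lambda,\aleph_{\omega+1})$; such $\kappa$ exist arbitrarily far up (e.g.\ take $\kappa$ to be a sufficiently large successor cardinal), which is what will give \emph{arbitrarily large} families. Apply Proposition~\ref{prop:Baumgartner} to this $\kappa$ to obtain a cardinal- and cofinality-preserving extension in which $2^{\aleph_0}=\kappa$ and there is a sequence $\langle\sigma_\alpha:\alpha<\kappa\rangle$ with $\sigma_\alpha\in\prod_{\xi<\kappa}\mu_\xi$ and $|\sigma_\alpha\cap\sigma_\beta|<\omega$ for $\alpha\ne\beta$. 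In this extension, set $f_\alpha=\sigma_\alpha\restriction\aleph_{\omega+1}$ for each $\alpha<\kappa$. Since cardinals are absolute between the ground model and the extension, $\mu_\xi=\max(|\xi|,\aleph_0)\le\aleph_\omega$ for every $\xi<\aleph_{\omega+1}$, so each $f_\alpha$ is a function in $\aleph_\omega^{\aleph_{\omega+1}}$ of the required (coordinate-bounded) form. For $\alpha\ne\beta$ one has $|f_\alpha\cap f_\beta|\le|\sigma_\alpha\cap\sigma_\beta|<\omega$, and since $\aleph_{\omega+1}$ is infinite this already forces $f_\alpha\ne f_\beta$; hence $\{f_\alpha:\alpha<\kappa\}$ is a strongly almost disjoint family of functions in $\aleph_\omega^{\aleph_{\omega+1}}$ of size $\kappa\ge\lambda$, added by a forcing that collapses no cardinals. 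As $\lambda$ was arbitrary, this settles \cite[Question 22]{Zapletal1997} affirmatively.

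I do not expect a genuine obstacle here: all the substantive work --- building the almost disjoint sequence by a forcing that is simultaneously cardinal preserving and forces $2^{\aleph_0}=\kappa$, via the Baumgartner thinning-out argument --- is already contained in Proposition~\ref{prop:Baumgartner}. The only points requiring a little care are bookkeeping: $\kappa$ must be taken of uncountable cofinality (both so that the proposition applies and so that $2^{\aleph_0}=\kappa$ is consistent), $\kappa$ must be at least $\aleph_{\omega+1}$ so that restriction to the initial segment $\aleph_{\omega+1}$ is meaningful, and one should note that cardinal and cofinality preservation guarantees that $\aleph_\omega$ and $\aleph_{\omega+1}$ denote the same ordinals in the extension as in $V$, so that "$\aleph_\omega^{\aleph_{\omega+1}}$" is unambiguous.
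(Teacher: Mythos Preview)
Your proposal is correct and is exactly the intended derivation: the paper states this as a bare corollary with no proof, so reading it off from Proposition~\ref{prop:Baumgartner} by choosing $\kappa\ge\max(\lambda,\aleph_{\omega+1})$ of uncountable cofinality and restricting the resulting $\sigma_\alpha$'s to $\aleph_{\omega+1}$ is precisely what is meant. Your bookkeeping remarks (cardinal preservation making $\aleph_\omega,\aleph_{\omega+1}$ absolute, and the finite-intersection bound forcing the $f_\alpha$ to be pairwise distinct) are the only points that needed checking, and you handle them correctly.
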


The following is of further interest, especially for Question~\ref{quest:PFAMA} at the end of the paper. In the second paragraph after the proof of Lemma~\ref{lem:sizeWetzel}, we have shown using a simple construction that:
\begin{prop}
    $2^{\aleph_0} = \aleph_2$ implies that there is $\langle \sigma_\alpha : \alpha < \omega_2 \rangle$ so that for any $\alpha < \beta < \omega_2$, $\sigma_\alpha \in ~^{\omega_2}\omega_1$ and $\vert \sigma_\alpha \cap \sigma_\beta \vert < \omega_1$.
\end{prop}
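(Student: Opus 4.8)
The plan is to make precise the recursive construction sketched in the paragraph following Lemma~\ref{lem:sizeWetzel}. Recall that what is required is a sequence $\langle\sigma_\alpha:\alpha<\omega_2\rangle$ of functions $\omega_2\to\omega_1$ such that for $\alpha<\beta$ the graphs of $\sigma_\alpha$ and $\sigma_\beta$ meet in a countable set, i.e. $\{\xi<\omega_2:\sigma_\alpha(\xi)=\sigma_\beta(\xi)\}$ is countable. The single delicate point is the ``tail'' $[\alpha,\omega_2)$ of $\sigma_\alpha$: a literally constant value $\alpha$ no longer lies in $\omega_1$ once $\alpha\geq\omega_1$, so $\alpha$ must be encoded coordinatewise. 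Accordingly I would first fix an injection $e_\xi\colon\xi\to\omega_1$ for each $\xi<\omega_2$ (these exist since $\vert\xi\vert\leq\aleph_1$) and a bijection $b_\alpha\colon\omega_1\to\alpha$ for each $\alpha\in[\omega_1,\omega_2)$. The role of $e_\xi$ is exactly the counting observation that at a fixed coordinate $\xi$ only the $\vert\xi\vert\leq\aleph_1$ many ordinals below $\xi$ ever need to be kept apart there.

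I would then build $\langle\sigma_\alpha:\alpha<\omega_2\rangle$ by recursion on $\alpha$. On the tail put $\sigma_\alpha(\xi)=e_\xi(\alpha)$ for $\xi>\alpha$, and set $\sigma_\alpha(\alpha)=0$. On the initial segment $\alpha$: if $\alpha<\omega_1$, let $\sigma_\alpha\restriction\alpha$ be constantly $0$ (harmless, as $\alpha$ is countable); if $\alpha\geq\omega_1$, carry out the standard diagonalization, letting $\sigma_\alpha(b_\alpha(\eta))$, for $\eta<\omega_1$, be the least ordinal in $\omega_1\setminus\{\sigma_\gamma(b_\alpha(\eta)):\gamma<\alpha,\ b_\alpha^{-1}(\gamma)\leq\eta\}$, which exists because the removed set has size at most $\vert\eta+1\vert\leq\aleph_0<\aleph_1$. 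By construction each $\sigma_\alpha$ maps $\omega_2$ into $\omega_1$.

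For the almost disjointness, fix $\alpha<\beta<\omega_2$. If $\xi>\beta$ then $\sigma_\alpha(\xi)=e_\xi(\alpha)\neq e_\xi(\beta)=\sigma_\beta(\xi)$ by injectivity of $e_\xi$, so the graphs are disjoint above $\beta$, and the coordinate $\xi=\beta$ contributes at most one pair. For $\xi<\beta$, the stage-$\beta$ step was arranged precisely so that $\sigma_\beta\restriction\beta$ agrees with each $\sigma_\gamma\restriction\beta$ (for $\gamma<\beta$) on only countably many $\xi$: this is automatic when $\beta<\omega_1$, and when $\beta\geq\omega_1$ it follows by taking $\gamma=\alpha$ and $\eta_0=b_\beta^{-1}(\alpha)$, since then $\sigma_\alpha(b_\beta(\eta))$ is among the values excluded when $\sigma_\beta(b_\beta(\eta))$ was defined, for every $\eta\geq\eta_0$; hence $\{\xi<\beta:\sigma_\alpha(\xi)=\sigma_\beta(\xi)\}\subseteq b_\beta[\eta_0]$ is countable. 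Altogether $\vert\sigma_\alpha\cap\sigma_\beta\vert\leq\aleph_0<\omega_1$, as desired.

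I expect the only genuine obstacle to be getting the tails right: at each coordinate $\xi>\alpha$ one must record enough information to separate $\sigma_\alpha$ from all earlier $\sigma_\gamma$ without leaving $\omega_1$, and what makes this possible is precisely that only $\vert\xi\vert\leq\aleph_1$ ordinals compete at coordinate $\xi$. Granting that, the diagonalization over initial segments is entirely routine, using only the regularity of $\omega_1$ and that $\vert\alpha\vert\leq\aleph_1$ for every $\alpha<\omega_2$. (In fact the construction nowhere uses the value of the continuum; the hypothesis $2^{\aleph_0}=\aleph_2$ is pertinent only because in that case this sequence is exactly the combinatorial trace of a Wetzel family, as explained after Lemma~\ref{lem:sizeWetzel}.)
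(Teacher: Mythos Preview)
Your proof is correct and follows essentially the same approach as the paper, which merely refers back to the sketch given after Lemma~\ref{lem:sizeWetzel}: diagonalize on the initial segment $\alpha$ and make $\sigma_\alpha$ ``constant'' on the tail. Your explicit use of injections $e_\xi\colon\xi\to\omega_1$ is exactly the fix the paper leaves implicit (in the proof of the subsequent proposition it instead works in $\prod_{\beta<\omega_2}\max(\omega_1,\beta+1)$ and remarks that ``this clearly makes no difference''), and your observation that the continuum hypothesis plays no role in the construction itself is accurate.
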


In fact, $\MA + 2^{\aleph_0} = \aleph_2$ (in particular $\PFA$) implies that we can even assume finite intersections, and thus the conclusion of Proposition~\ref{prop:Baumgartner} holds.

\begin{prop}
    $\MA + 2^{\aleph_0} = \aleph_2$ implies that there is $\langle \sigma_\alpha : \alpha < \omega_2 \rangle$ so that for any $\alpha < \beta < \omega_2$, $\sigma_\alpha \in ~^{\omega_2}\omega_1$ and $\vert \sigma_\alpha \cap \sigma_\beta \vert < \omega$.
\end{prop}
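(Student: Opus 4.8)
The plan is to build $\langle \sigma_\alpha : \alpha < \omega_2 \rangle$ by a transfinite recursion of length $\omega_2$, keeping at each stage $\alpha$ that $\sigma_\alpha \in {}^{\omega_2}\omega_1$ and $\vert \sigma_\alpha \cap \sigma_\beta \vert < \omega$ for all $\beta < \alpha$ (and, for the stronger form corresponding to Proposition~\ref{prop:Baumgartner}, that $\sigma_\alpha \in \prod_{\xi<\omega_2}\mu_\xi$; this requires only cosmetic changes, since $2^{\aleph_0}=\aleph_2$ also gives $2^{<\omega_2}=\aleph_2$, hence $\vert H(\omega_2)\vert = \omega_2$). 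The stages $\alpha < \omega_1$ are trivial: the earlier functions are countably many, so one may pick $\sigma_\alpha(\xi)$ for each $\xi$ separately outside the countable set $\{\sigma_\beta(\xi) : \beta<\alpha\}\subseteq\omega_1$, making $\sigma_\alpha$ disagree everywhere with all earlier functions. The difficulty is entirely in the stages $\alpha$ with $\omega_1 \le \alpha < \omega_2$, where $\vert\alpha\vert = \aleph_1$ and the earlier values at a fixed coordinate may already exhaust $\omega_1$, so that genuine (finite) agreements become unavoidable; here I would invoke $\MA_{\aleph_1}$, which under the hypothesis $2^{\aleph_0}=\aleph_2$ is all of $\MA$.

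Fix such an $\alpha$ and enumerate $\{\sigma_\beta : \beta<\alpha\}$ as $\langle\tau_\eta : \eta<\omega_1\rangle$; the task is to produce a single $\sigma_\alpha\colon \omega_2\to\omega_1$ that is \emph{eventually different} from each $\tau_\eta$. The template is the eventually-different poset: conditions $(s, F, \langle u_\eta : \eta\in F\rangle)$ with $s$ a finite partial function approximating $\sigma_\alpha$, $F\in[\omega_1]^{<\omega}$ a finite set of indices ``committed to'', and $u_\eta\in[\omega_2]^{<\omega}$ recording the finitely many coordinates at which $s$ is, and will remain, permitted to agree with $\tau_\eta$; the order extends $s$, extends $F$, freezes the $u_\eta$ for $\eta\in F$, and forbids new coordinates of $s$ to agree with any $\tau_\eta$ ($\eta\in F$) off $u_\eta$. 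If this poset (or a suitable modification) is ccc, then a filter meeting $\{\,\{p:\eta\in F_p\} : \eta<\omega_1\,\}$ together with enough sets $\{p : \xi\in\dom s_p\}$ to force totality yields the required $\sigma_\alpha$; feeding the $\sigma_\alpha$ back into the recursion finishes the construction after $\omega_2$ steps.

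The step I expect to be the real obstacle is making the previous paragraph go through, and it has two faces. First, with finite conditions one needs $\aleph_2$-many dense sets merely to make $\sigma_\alpha$ total on $\omega_2$, which overshoots the $\aleph_1$-dense-set budget of $\MA_{\aleph_1}$. Second, the poset as just written is \emph{not} ccc: since $\sigma_\alpha$ takes values in $\omega_1$, the conditions with $\dom s$ a singleton, $F=\varnothing$, and pairwise distinct values already form an antichain of size $\aleph_1$. Both issues can be attacked by stratifying $\omega_2$ into pieces on which $\sigma_\alpha$ is forced to take values in a fixed \emph{countable} subset of $\omega_1$: on such a piece the corresponding poset becomes $\sigma$-centered and the relevant dense sets number only $\aleph_1$, so $\MA_{\aleph_1}$ applies to each piece separately, and there is no harm in having uncountably many pieces since $\MA_{\aleph_1}$ may be invoked any number of times. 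The genuinely delicate point — and the heart of the matter — is then to arrange the pieces together with their countable ``local ranges'' so that, for each fixed $\tau_\eta$, the finitely many agreements of $\sigma_\alpha$ with $\tau_\eta$ contributed by the (uncountably many) pieces still add up to a \emph{finite} set rather than an infinite one; this is where the freedom in the recursion must be used, and very likely the fact that the $\tau_\eta$ are themselves pairwise almost disjoint, and it is the point I would need to think hardest about.
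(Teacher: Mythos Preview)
Your outline has the right instincts---countable local ranges to recover ccc-ness, and the suspicion that the pairwise almost-disjointness of the earlier $\tau_\eta$ must be used---but it stops short of the two moves that make the argument go through, and the ``stratification into pieces'' route you sketch is not the one the paper takes.

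First, the problem of needing $\aleph_2$ dense sets disappears once you arrange that $\sigma_\alpha$ is \emph{constant} (say with value $\alpha$) on $[\alpha,\omega_2)$. If you maintain this throughout the recursion, then for $\beta<\alpha$ the tails of $\sigma_\beta$ and $\sigma_\alpha$ on $[\alpha,\omega_2)$ are the constants $\beta$ and $\alpha$ respectively and never meet; so only the restriction $\sigma_\alpha\restriction\alpha$ needs to be produced by forcing, and that is $\le\aleph_1$ coordinates. No stratification is needed.

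Second, the countable local ranges are not obtained by partitioning the coordinates but by a preliminary \emph{diagonalisation}: before defining the poset, choose for each $\xi<\alpha$ a countable $S(\xi)\subseteq\omega_1$ such that for every $\beta<\alpha$ the set $\{\xi<\alpha:\sigma_\beta(\xi)\in S(\xi)\}$ is countable. (This is easy: enumerate the $\sigma_\beta$'s in type $\omega_1$ and diagonalise.) Now take the single poset of pairs $(s,w)$ with $s$ a finite partial selector in $\prod_{\xi\in\dom s}S(\xi)$ and $w\in[\alpha]^{<\omega}$, where extensions of $s$ are forbidden to meet $\sigma_\beta$ for $\beta\in w$. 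The dense sets for totality on $\alpha$ and for adding each $\beta<\alpha$ to $w$ number $\aleph_1$, so $\MA_{\aleph_1}$ suffices.

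The one genuinely nontrivial point is the ccc of this poset, and here your instinct is exactly right: it \emph{does} use that the earlier $\sigma_\beta$ have pairwise finite intersection. After the usual $\Delta$-system reductions one is left with conditions $(s_\delta,w_\delta)$ whose incompatibility is witnessed, for each pair, by some $\beta\in w_\delta\setminus w$ and some $\xi\in\dom s_i\setminus r$ with $\sigma_\beta(\xi)=s_i(\xi)$; a pigeonhole argument then forces two distinct $\beta^0,\beta^1$ to agree at infinitely many $\xi$, contradicting the inductive hypothesis. This is the step you flagged as ``the heart of the matter,'' and it is resolved by a counting argument rather than by any clever arrangement of pieces.
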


\begin{proof}
    We recursively construct a sequence as above, but where each $\sigma_\alpha$ is an element of $\prod_{\beta < \omega_2} \max(\omega_1, \beta + 1)$. This clearly makes no difference. We also ensure that $\sigma_\alpha$ always constantly maps to $\alpha$ on $[\alpha,\omega_2)$. Suppose that $\langle \sigma_\beta : \beta < \alpha \rangle$ has been constructed. Using a simple diagonalization argument, we find a function $S\colon \alpha \to [\omega_1]^\omega$, so that for any $\beta < \alpha$, $\{ \xi < \alpha : \sigma_\beta(\xi) \in S(\xi) \}$ is countable. Now let $\mathbb{P}$ be the natural poset with finite conditions adding a function $\sigma \in \prod_{\xi < \alpha} S(\xi)$ that has finite intersection with each $\sigma_\beta$, $\beta < \alpha$. To be more precise, $\mathbb{P}$ consists of pairs $(s,w)$ where $s$ is a finite partial function $s \in \prod_{\xi \in \dom(s)} S(\xi)$ and $w \in [\alpha]^{<\omega}$. A condition $(t,u)$ extends $(s,w)$ if $s \subseteq t$, $w \subseteq u$ and for every $\beta \in w$, $(t \setminus s) \cap \sigma_\beta = \emptyset$. Once we have shown that $\mathbb{P}$ is ccc, we can apply $\MA$ to find $\sigma$. Then simply let $\sigma_\alpha \restriction \alpha = \sigma$ and $\sigma_\alpha \restriction [\alpha, \omega_2)$ constantly equal $\alpha$.
    
    So suppose that $\langle (s_\delta, w_\delta) : \delta < \omega_1 \rangle$ is an uncountable antichain in $\mathbb{P}$. Without loss of generality we can assume that $\langle \dom(s_\delta) : \delta < \omega_1 \rangle$ forms a $\Delta$-system with root $r$ and that $s_\delta \restriction r = s$ and $\vert \dom(s_\delta) \vert = n$, for all $\delta$ and some fixed $s$ and $n$. Also we may assume that $\langle w_\delta : \delta < \omega_2 \rangle$ is a $\Delta$-system with root $w$. Since $\langle \dom(s_\delta) \setminus r : \delta < \omega_1 \rangle$ is pairwise disjoint and for each $\sigma_\beta$, $\sigma_\beta \cap \bigcup_{\xi < \alpha} \left(\{\xi \} \times S(\xi)\right)$ is countable, there is a large enough $\gamma \in [\omega, \omega_1)$, so that for every $\delta \in [\gamma, \omega_1)$, 
        $$s_{\delta} \restriction (\dom(s_{\delta}) \setminus r) \cap \sigma_\beta = \emptyset,$$ for all $\beta \in \bigcup_{i \in \omega} w_i$. Note that this means that $(s_i \cup s_{\delta}, w_i \cup w_{\delta} ) \leq (s_i, w_i)$, for every $i \in \omega$. Thus the only way in which $(s_{i}, w_{i})$ and $(s_\delta, w_\delta)$ can be incompatible, is if there is $\beta \in w_{\delta} \setminus w$ and $\xi \in \dom(s_i) \setminus r$, so that $$\sigma_\beta(\xi) = s_i(\xi).$$
        For any $\delta \geq \gamma$, let's define functions $\xi_{\delta}$ and $\beta_\delta$ with domain $\omega$ so that $\xi_\delta(i) \in \dom(s_i) \setminus r$ is a witness $\xi$ as above for $\beta_\delta(i) \in w_{\delta} \setminus w$. Since for all $i \in \omega$, $\dom(s_i) \setminus r$ has finite size $\leq n$, there must be $\delta_0 < \delta_1$ and an infinite $x \subseteq \omega$ so that $$\xi_{\delta_0} \restriction x = \xi_{\delta_1} \restriction x.$$ Since $w_{\delta_0} \setminus w$ and $w_{\delta_1} \setminus w$ are finite, we find an infinite $y \subseteq x$ so that both $\beta_{\delta_0} \restriction y$ and $\beta_{\delta_1} \restriction y$ are constant, say with values $\beta^0$ and $\beta^1$. Now $\beta^0 \neq \beta^1$, since $w_{\delta_0} \setminus w$ and $w_{\delta_1} \setminus w$ are disjoint. But then $\sigma_{\beta^0} \cap \sigma_{\beta^1}$ is infinite, yielding a contradiction.
\end{proof}

\section{Wetzel families with arbitrary continuum and MA}\label{sec:wetzelcontinuum}

In this section we prove our main result that Wetzel families can coexist with arbitrary values of the continuum and in combination with Martin's Axiom.

\subsection{Adding entire functions}

\begin{definition}\label{def:Q}
 The poset $\mathbb{Q}$ consists of all conditions $$p = (a_p, f_p, \varepsilon_p, m_p),$$ where $a_p \in [\mathbb{C}]^{<\omega}$, $f_p\in \mathcal{H}(\mathbb{C})$, $\varepsilon_p$ is a positive rational number and $m_p \in \omega$. A condition $q$ extends $p$ iff $a_p \subseteq a_q$, $f_q \restriction a_p = f_p \restriction a_p$, $\varepsilon_q \leq \varepsilon_p$, $m_p \leq m_q$ and $\| f_q - f_p \|_{m_p} \leq \varepsilon_p - \varepsilon_q.$
\end{definition}

\begin{definition}
Let $H \colon X \to \mathcal{P}(\mathbb{C})$, for some $X \subseteq \mathbb{C}$. Then we define $$\QH = \{ p \in \mathbb{Q} : a_p \subseteq X \wedge \forall z \in a_p (f_p(z) \in H(z)) \}.$$
\end{definition}

Note that the notion of incompatibility of conditions $p,q \in \QH$ isn't dependent on $H$. Namely, if $r$ extends $p$ and $q$ in $\mathbb{Q}$, then $(a_p \cup a_q, f_r, \varepsilon_r, m_r)\in \QH$ and also extends $p$ and $q$. In other words, $\QH$ is a subforcing of $\mathbb{Q}$ (see Definition~\ref{def:subforcing}). For most considerations it is also not relevant in which transitive model of set theory $M$ we evaluate the definition of $\mathbb{Q}(H)$, as long as $H \in M$. 

\begin{lemma}\label{lem:implicit}
    Let $M\subseteq V$ be a transitive model of $\ZF^-$(possibly a proper class) and $H \in M$ be a partial function from $\mathbb{C}$ to $\mathcal{P}(\mathbb{C})$. Then $\mathbb{Q}(H)^M$ is a dense subforcing of $\mathbb{Q}(H)^V$.
\end{lemma}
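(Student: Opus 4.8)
The claim has two parts: that $\mathbb{Q}(H)^M$ is a subforcing of $\mathbb{Q}(H)^V$ in the sense of Definition~\ref{def:subforcing}, and that it is dense. The subforcing part is essentially the same observation already made in the paragraph preceding Lemma~\ref{lem:implicit}: incompatibility of conditions $p,q \in \mathbb{Q}(H)$ does not depend on the ambient model, because a common extension in $\mathbb{Q}$ can be converted into one in $\mathbb{Q}(H)$ by replacing its first coordinate with $a_p \cup a_q$ (which lies in $X \cap M$ and on which $f_r$ already takes values in $H$). The extension relation is quantifier-free (it mentions $a_p \subseteq a_q$, $f_q\restriction a_p = f_p\restriction a_p$, inequalities between rationals, and the seminorm condition $\|f_q - f_p\|_{m_p} \le \varepsilon_p - \varepsilon_q$), and these are all absolute between transitive models of $\ZF^-$ containing the relevant objects, since $\|\cdot\|_\delta$ is computed from the values of the (ground-model, hence coded) entire functions on a fixed ball. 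So $\mathbb{Q}(H)^M \subseteq \mathbb{Q}(H)^V$ with matching extension and incompatibility relations.

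For density, I would take an arbitrary $p = (a_p, f_p, \varepsilon_p, m_p) \in \mathbb{Q}(H)^V$ and produce $q \in \mathbb{Q}(H)^M$ with $q \le p$. The only coordinate of $p$ that need not lie in $M$ is the entire function $f_p$, so the task is to find $g \in \mathcal{H}(\mathbb{C}) \cap M$ with $g \restriction a_p = f_p \restriction a_p$ and $\|g - f_p\|_{m_p} \le \varepsilon_p$ (keeping $a_p$, $\varepsilon_p$, $m_p$ fixed, or shrinking $\varepsilon$ if one prefers strict inequality). Then $q = (a_p, g, \varepsilon_p, m_p)$ works: the values $g(z) = f_p(z) \in H(z)$ for $z \in a_p$ are inherited, so $q \in \mathbb{Q}(H)^M$, and the extension conditions $a_p \subseteq a_p$, $g\restriction a_p = f_p\restriction a_p$, $\varepsilon_p \le \varepsilon_p$, $m_p \le m_p$, $\|g - f_p\|_{m_p} \le \varepsilon_p - \varepsilon_p = 0$... wait — that last inequality forces $g = f_p$ on $B_{m_p}(0)$, hence everywhere by Proposition~\ref{prop:constantaccum}, which would be absurd unless $f_p \in M$ already. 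So instead I would keep a smaller target $\varepsilon_q < \varepsilon_p$ and only demand $\|g - f_p\|_{m_p} \le \varepsilon_p - \varepsilon_q$, i.e. approximate $f_p$ by an $M$-function $g$ to within $\varepsilon_p - \varepsilon_q$ on $B_{m_p}(0)$, while still matching it exactly on the finite set $a_p$.

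Such a $g$ exists by a routine approximation: partial sums of the Taylor series of $f_p$ are polynomials converging to $f_p$ uniformly on $B_{m_p}(0)$, but their coefficients are real (complex) parameters possibly outside $M$. To fix this, I would first perturb to a polynomial with coefficients in $\mathbb{Q}[i] \subseteq M$ that is $(\varepsilon_p - \varepsilon_q)/2$-close to $f_p$ on $B_{m_p}(0)$, then correct the finitely many interpolation values on $a_p$ by adding a polynomial of the form $\sum_{z \in a_p} c_z \prod_{w \in a_p \setminus\{z\}}(X - w)$ whose coefficients can be chosen in $M$ (the points of $a_p$ lie in $X \subseteq M$, and the required correction values $f_p(z) - (\text{current value})$ — hmm, these values are in $V$, not obviously $M$). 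This is the one genuinely delicate point, and I expect it to be the main obstacle: the interpolation values $f_p(z)$ for $z \in a_p$ are elements of $H(z)$, which is a set in $M$, so each $f_p(z)$ is an element of $M$. Hence a Lagrange interpolation polynomial through the points $(z, f_p(z))_{z \in a_p}$ with the small-norm polynomial already subtracted off has all data in $M$; shrinking the interpolating nodes' influence (the Lagrange basis polynomials can be scaled — no, they're determined) — the cleaner route is: choose the $\mathbb{Q}[i]$-polynomial $P$ with $\|P - f_p\|_{m_p}$ small, then set $g = P + L$ where $L \in M$ is the Lagrange interpolant of the data $z \mapsto f_p(z) - P(z)$ over $z \in a_p$; since each $f_p(z) \in H(z) \in M$ and $P \in M$, this data lies in $M$, so $L \in M$ and $g \in M$; and $\|L\|_{m_p}$ is bounded by a fixed constant (depending only on $a_p$, $m_p$) times $\max_{z \in a_p}|f_p(z) - P(z)| \le \|P - f_p\|_{m_p}$, which we made small. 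Choosing $\|P - f_p\|_{m_p}$ small enough that $\|P - f_p\|_{m_p} + \|L\|_{m_p} \le \varepsilon_p - \varepsilon_q$ completes the construction, and then $q = (a_p, g, \varepsilon_q, m_p) \le p$ with $q \in \mathbb{Q}(H)^M$, proving density.
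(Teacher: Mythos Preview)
Your proof is correct and follows essentially the same approach as the paper: approximate $f_p$ by a polynomial with coefficients in $M$ that agrees with $f_p$ on $a_p$, using that the interpolation data $a_p \subseteq \dom H \subseteq M$ and $f_p(z) \in H(z) \subseteq M$ lie in $M$. The paper's proof is a two-line sketch (polynomials with coefficients in the field generated by $\dom H \cup \bigcup_{z} H(z)$ are dense, and any $g$ with $g\restriction a = f\restriction a$ and $\|f-g\|_m = \delta < \varepsilon$ yields $(a,g,\varepsilon-\delta,m) \le (a,f,\varepsilon,m)$), and your $\mathbb{Q}[i]$-approximation plus Lagrange-correction is exactly the detail the paper leaves to the reader; the one small omission is that you should first increase $m_p$ so that $a_p \subseteq B_{m_p}(0)$ before bounding $\max_{z\in a_p}|f_p(z)-P(z)|$ by $\|f_p-P\|_{m_p}$, but this is a free extension.
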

\begin{proof}
Note that conditions $p$ such that $f_p$ is a polynomial in coefficients in the field generated by $\dom H \cup \bigcup_{z \in \dom H} H(z) \subseteq M$ form a dense subposet of $\mathbb{Q}(H)^V$. Namely, if $ \| f - g \|_m = \delta < \varepsilon$, and $f\restriction a = g \restriction a$, $(a, g, \varepsilon - \delta, m) \leq (a,f,\varepsilon,m)$.
\end{proof}

In particular, any iteration of the form $\mathbb{Q}(H_0) * \dot{\mathbb{Q}}(H_1)$, where $H_0, H_1$ are in the ground model, is equivalent to the product $\mathbb{Q}(H_0) \times \mathbb{Q}(H_1)$ and the ccc of the iteration is equivalent to that of the product. This will be used at least implicitly in several arguments. 

\begin{lemma}\label{lem:genericfunction}
Suppose that $H(z)$ is dense in $\mathbb{C}$, for every $z\in\dom H$. Then $\QH$ generically adds an entire function $f$ such that $f(z) \in H(z)$ for every $z \in \dom H$. \end{lemma}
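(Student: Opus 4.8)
The plan is to show that the generic filter $G$ yields a sequence of entire functions whose "approximation data" $(a_p,f_p,\varepsilon_p,m_p)$ Cauchy-converge on every compact set, so the limit $f$ is entire by Proposition~\ref{prop:convergence}, and then to check $f(z)\in H(z)$ for $z\in\dom H$ by a density argument. First I would observe that for each $m\in\omega$ and each positive rational $\varepsilon$, the set $D_{m,\varepsilon}=\{p\in\QH: m_p\geq m \wedge \varepsilon_p\leq\varepsilon\}$ is dense: given any $q$, we may pass to a stronger condition with the same $a_q,f_q$ but larger $m$ and smaller $\varepsilon$ (the extension clause $\|f_q-f_q\|_{m_q}=0\leq \varepsilon_q-\varepsilon_q$ is trivially met), using Lemma~\ref{lem:implicit} if one wants to stay inside a particular model, though here working in $V$ suffices. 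Hence in the generic extension we get conditions $p_n\in G$ with $m_{p_n}\to\infty$ and $\varepsilon_{p_n}\to 0$; since $G$ is a filter, along this chain the extension relation gives, for $n\leq n'$, $\|f_{p_{n'}}-f_{p_n}\|_{m_{p_n}}\leq \varepsilon_{p_n}-\varepsilon_{p_{n'}}\leq\varepsilon_{p_n}$.

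The key point is that this makes $\langle f_{p_n}: n\in\omega\rangle$ uniformly Cauchy on every ball $B_\delta(0)$: fix $\delta>0$ and $\varepsilon'>0$, choose $n$ large enough that $m_{p_n}\geq \delta$ and $\varepsilon_{p_n}<\varepsilon'/2$; then for all $n_0,n_1>n$ we have, using the triangle inequality and monotonicity of the seminorms $\|\cdot\|_\delta\leq\|\cdot\|_{m_{p_n}}$ (since $\delta\leq m_{p_n}$), that $\|f_{p_{n_0}}-f_{p_{n_1}}\|_\delta\leq\|f_{p_{n_0}}-f_{p_n}\|_{m_{p_n}}+\|f_{p_{n_1}}-f_{p_n}\|_{m_{p_n}}\leq 2\varepsilon_{p_n}<\varepsilon'$. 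So by Proposition~\ref{prop:convergence} the pointwise limit $f$ of $\langle f_{p_n}:n\in\omega\rangle$ is entire. One must also note that this limit does not depend on the choice of cofinal chain: any other condition $r\in G$ is compatible with some $p_n$ in $G$ with $m_{p_n}$ arbitrarily large, and then $\|f_r-f_{p_n}\|_{m_r}$ is controlled, so $f_r$ and $f$ agree in the limit on every compact set; in particular $f_r\restriction a_r$ matches $f$ on $a_r$ because $f_q\restriction a_r=f_r\restriction a_r$ for all $q\leq r$.

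Finally, for fixed $z\in\dom H$, the set $E_z=\{p\in\QH: z\in a_p\}$ is dense: given $q\in\QH$, since $H(z)$ is dense in $\mathbb{C}$ we can pick $c\in H(z)$ as close as we like to $f_q(z)$ — concretely, take a polynomial $g$ with $g\restriction a_q=f_q\restriction a_q$ and $g(z)=c$ and $\|g-f_q\|_{m_q}$ as small as desired (interpolating, e.g. $g=f_q+(c-f_q(z))\prod_{w\in a_q}\frac{z'-w}{z-w}$ in the variable $z'$, scaled down), so that $(a_q\cup\{z\},g,\varepsilon_q-\|g-f_q\|_{m_q},m_q)\in\QH$ extends $q$. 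Hence some $p\in G$ has $z\in a_p$, and then $f(z)=f_p(z)\in H(z)$ since $f$ agrees with $f_p$ on $a_p$ by the previous paragraph. The main obstacle is just bookkeeping the seminorm estimates and the independence of the limit from the chosen chain; no genuinely hard step is involved, as all the analytic content is already packaged in Proposition~\ref{prop:convergence} and the combinatorial content in the density of $D_{m,\varepsilon}$ and $E_z$.
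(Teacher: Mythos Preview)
Your proposal is correct and follows essentially the same route as the paper: define dense (open) sets forcing $m_p$ large and $\varepsilon_p$ small, show the resulting sequence of $f_p$'s is uniformly Cauchy on compact sets via the seminorm bound in the extension relation, invoke Proposition~\ref{prop:convergence}, and then use density of $E_z$ (via an interpolating perturbation that vanishes on $a_q$) together with the density of $H(z)$ to conclude $f(z)\in H(z)$.

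One small point of sloppiness: the phrase ``since $G$ is a filter, along this chain the extension relation gives $\|f_{p_{n'}}-f_{p_n}\|_{m_{p_n}}\leq\varepsilon_{p_n}-\varepsilon_{p_{n'}}$'' is not justified as stated---being in a filter does not make the $p_n$ comparable. You should either (i) note that the sets $D_{m,\varepsilon}$ are open as well as dense, so one can recursively choose the $p_n$ as a genuine decreasing chain in $G$, or (ii) do what the paper does and, for arbitrary $p,q\in G\cap D_n$, pass to a common extension $r\in G$ and bound $\|f_p-f_q\|_n\leq\|f_p-f_r\|_n+\|f_r-f_q\|_n<\tfrac{2}{n}$. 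Either fix is immediate; the rest of your argument is fine, and your interpolation formula for $E_z$ is a harmless variant of the paper's $f_p+\xi\prod_{y\in a_p}(z-y)$.
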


\begin{proof}
 Let $G$ be $\QH$-generic over $V$. For any $n \in \omega$, the set $D_n = \{ p \in \QH: \varepsilon_p < \frac{1}{n} \wedge m_p > n \}$ is clearly dense open. Moreover, for any $\xi \in \mathbb{C}$ and any $p \in \QH$ consider $$f_\xi(z) = f_p(z) + \xi \prod_{y \in a_p}(z-y).$$ Note that $f_\xi(y) = f_p(y)$, for every $y \in a_p$. Let $z \in \dom H \setminus a_p$ be arbitrary. Since $H(z)$ is dense, we can easily find a small enough $\xi$ so that $$\delta := \| f_p - f_\xi \|_{m_p} < \varepsilon_p$$ and $f_\xi(z) \in H(z)$. Then $(a_p \cup \{z\}, f_\xi, \varepsilon_p - \delta, m_p) \leq p$. This shows that $E_z = \{ q \in \QH : z \in a_q \}$ is dense open. We claim that for any sequences $\langle p_n : n \in \omega \rangle$ and $\langle q_n : n \in \omega \rangle$, with $p_n,q_n \in D_n \cap G$, $\langle f_{p_n}: n \in \omega\rangle$ and $\langle f_{q_n} : n \in \omega \rangle$ converge uniformly on compact sets to the same function $f \in \Hol$. To see this, use Proposition~\ref{prop:convergence} and notice that when $p,q \in D_n \cap G$ are arbitrary, there is $r \leq p,q$ and thus $$ \| f_p - f_q \|_{n} \leq  \| f_p - f_r \|_n + \| f_r - f_q \|_{n} < \frac{1}{n} + \frac{1}{n}.$$

For any $z \in \dom H$, we can find a decreasing sequence $\langle p_n : n \in \omega \rangle$ such that $p_n \in D_n \cap E_z \cap G$, for every $n$. Then $f(z) = \lim_{n \to \infty} f_{p_n}(z) = f_{p_0}(z) \in H(z)$. \end{proof}

Let us make the following interesting observation that will somewhat elucidate the necessity of the approach taken in the proof of the main result. 

\begin{lemma}\label{lem:nonccc}
    Let $\dom H$ be uncountable and suppose that there is an entire $f$ such that $f(z) \in H(z)$, for every $z \in \dom H$. Then $\mathbb{Q}(H)$ is not ccc.
\end{lemma}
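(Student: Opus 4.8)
The plan is to exhibit an uncountable antichain in $\mathbb{Q}(H)$ using the hypothesised entire function $f$ together with the identity theorem (Proposition~\ref{prop:constantaccum}). First I would fix the entire function $f$ with $f(z) \in H(z)$ for all $z \in \dom H$ and, since $\dom H$ is uncountable while $\mathbb{C}$ is separable, extract from $\dom H$ a subset of size $\aleph_1$ that accumulates at some point; passing to this subset we may assume $\dom H$ itself is uncountable and has an accumulation point. For each $z \in \dom H$ I would try to build a condition $p_z \in \mathbb{Q}(H)$ whose ``holomorphic part'' $f_{p_z}$ agrees with the target $f$ at $z$ but is forced to differ from $f$ elsewhere --- for instance, choose $p_z$ with $a_{p_z} = \{z\}$, $f_{p_z}$ a polynomial with $f_{p_z}(z) = f(z)$ but $f_{p_z} \neq f$ (so $f_{p_z}$ and $f$ agree only on a discrete set), and $\varepsilon_{p_z}, m_{p_z}$ chosen so that any common extension would be pinned too close to $f$ near $0$. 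The key point is to arrange the $\varepsilon$'s and $m$'s so that a common extension $r$ of $p_z$ and $p_{z'}$ would be forced to satisfy $f_r(z) = f(z)$, $f_r(z') = f(z')$ \emph{and} $\|f_r - f\|_{m}$ small for large $m$; letting $z, z'$ range over a convergent sequence, the constraints would force $f_r \to f$ uniformly on compacta through a convergent subsequence of conditions, hence $f_r$ would agree with $f$ on an infinite convergent set, contradicting $f_{p_z} \neq f$ via the identity theorem.

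Concretely I would use the accumulation structure more carefully. Fix a sequence $\langle z_n : n \in \omega \rangle$ of distinct points of $\dom H$ converging to some $w$. For each tail-behaviour one gets a condition; but to get an antichain of size $\aleph_1$ rather than just $\aleph_0$, I expect the right move is: take an uncountable almost disjoint family $\mathcal{A}$ of subsets of $\omega$, and for each $A \in \mathcal{A}$ build a condition $p_A$ whose polynomial $f_{p_A}$ interpolates $f$ on $\{z_n : n \in A, n < m_A\}$ for suitable $m_A$ and with $\varepsilon_{p_A}$ shrinking; two conditions $p_A, p_B$ with $A \cap B$ finite then cannot have a common extension because a common extension would have to match $f$ at cofinally many $z_n$ from both $A$ and $B$ while keeping $\|f_r - f\|_{m}$ controlled, and pushing through the limit gives agreement of $f_r$ with $f$ on an infinite set with accumulation point $w$. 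Alternatively, and perhaps more cleanly, one can mimic the obstruction in Lemma~\ref{lem:sizeWetzel}: the function $f$ ``uses up'' the density freedom, so conditions built to deviate from $f$ at an accumulation point can never be amalgamated.

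I would structure the write-up as: (1) reduce to $\dom H$ uncountable with an accumulation point $w$, and fix $f$; (2) for each $z$ in a well-chosen uncountable $Z \subseteq \dom H$ accumulating at $w$, define $p_z = (\{z\}, g_z, \varepsilon_z, m_z)$ where $g_z$ is a polynomial with $g_z(z) = f(z)$, $g_z \neq f$, and $\varepsilon_z$ small enough (relative to $m_z \geq$ some bound making $z \in B_{m_z}(0)$, and relative to $\sup_{B_{m_z}(0)}|g_z - f|$); (3) show $p_z \in \mathbb{Q}(H)$ and that $\{p_z : z \in Z\}$ is an antichain: given $z \neq z'$ and a putative common extension $r$, derive $\|f_r - f\|_{m_z} \leq \varepsilon_z$ and $\|f_r - f\|_{m_{z'}} \leq \varepsilon_{z'}$ plus $f_r(z) = g_z(z) = f(z)$; choosing the $\varepsilon$'s and $m$'s along the convergent sequence so these constraints accumulate, conclude $f_r$ and $f$ agree on a set with accumulation point, so $f_r = f$, contradicting $f_r \restriction \{z\} = g_z \restriction \{z\}$ being extended from $g_z \neq f$ --- wait, that last step needs $f_r$ to \emph{differ} from $f$, which it does not automatically, so the actual contradiction must come from $f_r$ agreeing with \emph{two} incompatible polynomials; the cleanest fix is to demand $g_z(z') \neq f(z')$ is impossible to repair, i.e. build the $g_z$ so that no entire function can simultaneously extend $g_z \restriction \{z\}$ and $g_{z'} \restriction \{z'\}$ while staying uniformly close to \emph{both} on a large disk, which forces closeness to $f$ on an infinite set but the conditions were rigged so that $f_r$ must also avoid $f$. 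The main obstacle, and the step I would spend the most care on, is precisely calibrating the quadruples $(\{z\}, g_z, \varepsilon_z, m_z)$ so that incompatibility is genuine --- i.e. packaging the identity theorem so that a common extension is simultaneously forced \emph{towards} $f$ (via the $\varepsilon$/$m$ constraints along the convergent sequence) and \emph{away} from $f$ (via the finitely many interpolation values), yielding the contradiction; the topology and the polynomial interpolation are routine once that bookkeeping is set up.
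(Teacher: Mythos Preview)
Your proposal has a genuine gap: the identity theorem cannot deliver \emph{pairwise} incompatibility. If $p_z$ and $p_{z'}$ had a common extension $r$, all you would learn is $f_r(z)=f(z)$, $f_r(z')=f(z')$, together with sup-norm bounds $\|f_r-g_z\|_{m_z}$ and $\|f_r-g_{z'}\|_{m_{z'}}$ small. That is agreement with $f$ at exactly two points plus a closeness condition; it nowhere forces $f_r$ to coincide with $f$ on a set with an accumulation point. Your suggested fixes (almost-disjoint families, shrinking $\varepsilon$'s along a convergent sequence) do not help, because antichain means pairwise incompatible: only two conditions are ever in play, and their $a_p$'s are finite. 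Worse, if you choose each $g_z$ close to $f$ in sup norm---which is what ``the constraints accumulate towards $f$'' would require---then the condition $r=(\{z,z'\},f,\varepsilon_r,m_r)$ with small $\varepsilon_r$ is already a common extension of $p_z$ and $p_{z'}$, so your family is not an antichain at all. If instead you choose $g_z$ far from $f$, then $f_r$ is forced far from $f$ and the identity theorem is simply irrelevant. You correctly sensed this tension in your own write-up (``wait, that last step needs $f_r$ to differ from $f$\ldots''), but there is no bookkeeping that resolves it.

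The paper's argument uses a completely different, genuinely two-point mechanism: the rigidity it exploits is that a sup-norm bound on a disk controls the \emph{derivative} on a smaller disk (Cauchy estimates, or equivalently Proposition~\ref{prop:convergence}). One fixes $n$ with $B_n(0)\cap\dom H$ uncountable and $\varepsilon$ with $\|\operatorname{Re}f'\|_n<\varepsilon$, and for each $z_0$ takes the linear function $f_{z_0}(z)=2\varepsilon(z-z_0)+f(z_0)$; then one chooses $\delta,m$ so that $\|g-f_{z_0}\|_m<\delta$ forces $\operatorname{Re}g'>\varepsilon$ on $B_n(0)$, and sets $p_{z_0}=(\{z_0\},f_{z_0},\delta,m)$. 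A common extension $r$ of $p_{z_0},p_{z_1}$ would have $\operatorname{Re}f_r'>\varepsilon$ on $B_n(0)$, yet $f_r(z_i)=f(z_i)$ for $i=0,1$; applying the complex mean value theorem to $f_r$ and to $f$ on the segment $[z_0,z_1]\subseteq B_n(0)$ yields $\varepsilon<\operatorname{Re}\dfrac{f(z_0)-f(z_1)}{z_0-z_1}<\varepsilon$, a contradiction. The moral is that the obstruction is a derivative obstruction, not a pointwise-agreement obstruction; the identity theorem is the wrong lever here.
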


\begin{proof}
Let $n \in \omega$ be such that $B_n(0) \cap \dom H$ is uncountable and let $\varepsilon> 0$ be so that $\| \RE(f') \|_n < \varepsilon$. For any $z_0 \in B_n(0) \cap \dom H$, define $$f_{z_0}(z) = 2\varepsilon(z- z_0)+ f(z_0).$$ Note that there is $\delta > 0$ and $m \ge n$ so that whenever $\| g - f_{z_0} \|_m < \delta$, then $\| g' - 2 \varepsilon \|_n < \varepsilon$.\footnote{For instance, this follows easily from Proposition~\ref{prop:convergence}. This part of the argument strongly depends on the special geometry of holomorphic functions. The statement is clearly not true for functions that are merely infinitely often differentiable.}
Let $p_{z_0} = (\{z_0 \}, f_{z_0}, \delta, m)$. We claim that $\{ p_{z_0} : z_0 \in B_n(0) \cap \dom H \}$ is an antichain. Namely, suppose that $z_0, z_1 \in B_n(0) \cap \dom H$ are arbitrary and that $r \leq p_{z_0}, p_{z_1}$. Then $\| f_r - f_{z_0} \|_m < \delta$ and so $\|f'_r - 2\varepsilon \|_n < \varepsilon$. In particular, for any $z \in B_n(0)$, $\RE(f_r'(z)) > \varepsilon$. At the same time, by the complex mean value theorem (see e.g. \cite[Theorem 2.2]{EvardJafari}), $$\varepsilon < \RE\left(\frac{f_r(z_0) - f_r(z_1)}{z_0 - z_1}\right)  = \RE\left(\frac{f(z_0) - f(z_1)}{z_0 - z_1}\right) < \varepsilon,$$ which poses a contradiction.
\end{proof}

It would be interesting to obtain some sort of converse to Lemma~\ref{lem:nonccc}. For instance, suppose that $H$ only maps to countable sets. Does the non-ccc of $\mathbb{Q}(H)$ imply at least that there is a Borel function $f$, with $f(z) \in H(z)$ for uncountably many $z \in \dom H(z)$?

\begin{cor}
    Let $\dom H$ be uncountable and suppose that $H(z)$ is dense in $\mathbb{C}$, for every $z\in\dom H$. Then $\mathbb{Q}(H) \times \mathbb{Q}(H)$ is not ccc.
\end{cor}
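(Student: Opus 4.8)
The plan is to deduce this corollary directly from Lemma~\ref{lem:genericfunction} and Lemma~\ref{lem:nonccc}. First I would observe that, since $\mathbb{Q}(H)\times\mathbb{Q}(H)$ is isomorphic to a two-step iteration $\mathbb{Q}(H)*\dot{\mathbb{Q}}(H)$ (where the second factor is the ground-model poset $\mathbb{Q}(H)$ re-interpreted in the extension; see Lemma~\ref{lem:implicit} and the remark following it), and since $\mathbb{Q}(H)$ does force the existence of a witnessing entire function by Lemma~\ref{lem:genericfunction}, it will suffice to work in the extension $V[G]$ by the first copy of $\mathbb{Q}(H)$ and show that there $\mathbb{Q}(H)$ is not ccc.

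The key point is that the hypotheses of Lemma~\ref{lem:nonccc} are satisfied in $V[G]$. Indeed, $\dom H$ is still uncountable in $V[G]$ (forcing cannot destroy the uncountability of a set of reals in the ground model — at worst it collapses cardinals, but $\dom H$ remains an infinite set that was uncountable, and in fact one checks $\mathbb{Q}(H)$ is separable hence ccc so no cardinals are collapsed, though we do not even need this). More importantly, by Lemma~\ref{lem:genericfunction} there is, in $V[G]$, an entire function $f$ with $f(z)\in H(z)$ for every $z\in\dom H$; here we use the hypothesis that each $H(z)$ is dense in $\mathbb{C}$. Then Lemma~\ref{lem:nonccc}, applied in $V[G]$ with this $f$, tells us that $\mathbb{Q}(H)^{V[G]}$ has an uncountable antichain, and by Lemma~\ref{lem:implicit} the ground-model version $\mathbb{Q}(H)^V$ is dense in it, so it too is not ccc in $V[G]$. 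Since a two-step iteration is ccc iff the first factor is ccc and the second is forced to be ccc (Lemma~\ref{lem:ccctwostep}), and here the first factor $\mathbb{Q}(H)$ forces that the second copy is not ccc, the iteration — and hence the product $\mathbb{Q}(H)\times\mathbb{Q}(H)$ — is not ccc.

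I do not anticipate a serious obstacle here: this is essentially a bookkeeping argument chaining together the three preceding lemmas. The only point requiring a little care is the identification of $\mathbb{Q}(H)\times\mathbb{Q}(H)$ with the iteration $\mathbb{Q}(H)*\dot{\mathbb{Q}}(H)$ and the transfer of the ``not ccc'' property across the density embedding of Lemma~\ref{lem:implicit}; both are standard, and the paper has already flagged the product/iteration equivalence in the paragraph after Lemma~\ref{lem:implicit}. One should also note explicitly that the antichain produced by Lemma~\ref{lem:nonccc} lives in $\mathbb{Q}(H)$ itself (not merely in $\mathbb{Q}$), which is immediate from the construction there since every $p_{z_0}=(\{z_0\},f_{z_0},\delta,m)$ satisfies $f_{z_0}(z_0)=f(z_0)\in H(z_0)$, so it is a genuine condition of $\mathbb{Q}(H)$.
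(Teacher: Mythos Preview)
Your overall strategy matches the paper's proof exactly: identify the product with the iteration, force once with $\mathbb{Q}(H)$, observe that Lemma~\ref{lem:genericfunction} gives an entire $f$ selecting from $H$, apply Lemma~\ref{lem:nonccc} in the extension, and conclude via Lemma~\ref{lem:ccctwostep}. The paper does precisely this.

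There is, however, a genuine error in your justification that $\dom H$ remains uncountable in $V[G]$. You write that ``forcing cannot destroy the uncountability of a set of reals in the ground model'' and that ``$\mathbb{Q}(H)$ is separable hence ccc''. Both claims are false. Forcing that collapses $\omega_1$ certainly makes ground-model sets of size $\aleph_1$ countable, and $\mathbb{Q}(H)$ is \emph{not} in general separable or ccc --- indeed Lemma~\ref{lem:nonccc} itself exhibits uncountable antichains in $\mathbb{Q}(H)$ once a selecting $f$ exists. The paper handles this correctly by a case split: either $\mathbb{Q}(H)$ is already not ccc (so the product trivially fails ccc), or $\mathbb{Q}(H)$ \emph{is} ccc, in which case $\omega_1$ is preserved and $\dom H$ stays uncountable in $V[G]$, and then your argument goes through. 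Insert this dichotomy and your proof is complete.
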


\begin{proof}
Either $\mathbb{Q}(H)$ is already not ccc, or $\dom H$ is preserved to be uncountable and by Lemma~\ref{lem:implicit} and ~\ref{lem:genericfunction} we are in the situation of Lemma~\ref{lem:nonccc} after forcing with $\mathbb{Q}(H)$ once. Thus $\Vdash_{\mathbb{Q}(H)} \text{``$\mathbb{Q}(H)$ is not ccc"}$ and by Lemma~\ref{lem:ccctwostep}, $\mathbb{Q}(H) * \dot{\mathbb{Q}}(H) \cong \mathbb{Q}(H) \times \mathbb{Q}(H)$ is not ccc.
\end{proof}

Thus the forcings $\QH$ can generally not be recycled in a ccc construction. If one wants to add another entire function, one has to pass to a new $H$.

 \subsection{Tools for the successor step}

\begin{lemma}\label{lem:interbound}
Let $l,m\in \omega$ and $K \subseteq \mathbb{C}^{l+1}$ be compact, such that every element of $K$ is one-to-one. Then there is $L > 0$ such that for any $\bar z \in K$, there is $g \in \mathcal{H}(\mathbb{C})$ with $\| g \|_m < L$, $g(z_i) = 1$ for every $i < l$ and $g(z_l) = 0$. 
\end{lemma}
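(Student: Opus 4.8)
The plan is to produce the functions $g$ explicitly as Lagrange-type interpolation polynomials and then argue that, as $\bar z$ ranges over the compact set $K$, the relevant sup-norm is bounded. Fix $\bar z = (z_0,\dots,z_l) \in K$; since $\bar z$ is one-to-one, the points $z_0,\dots,z_l$ are pairwise distinct. Consider the polynomial
$$g_{\bar z}(z) = \prod_{i<l} \frac{z - z_l}{z_i - z_l}.$$
This has degree $l$, satisfies $g_{\bar z}(z_l) = 0$, and $g_{\bar z}(z_i) = 1$ for every $i < l$ (each factor evaluates to $1$ at $z = z_i$). So each individual $g_{\bar z}$ does the interpolation job; the only thing left is the uniform bound $\|g_{\bar z}\|_m < L$.

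For the uniform bound, first note that the map $\bar z \mapsto \min_{i<l}\vert z_i - z_l\vert$ is continuous and strictly positive on $K$ (again because every tuple in $K$ is one-to-one), so by compactness there is $\eta > 0$ with $\vert z_i - z_l\vert \geq \eta$ for all $i<l$ and all $\bar z \in K$. Also, by compactness there is $R>0$ with $\vert z_i\vert \leq R$ for all coordinates of all $\bar z \in K$. Then for $z \in B_m(0)$ and any $\bar z \in K$,
$$\vert g_{\bar z}(z)\vert = \prod_{i<l} \frac{\vert z - z_l\vert}{\vert z_i - z_l\vert} \leq \left(\frac{m + R}{\eta}\right)^l =: L.$$
Hence $\|g_{\bar z}\|_m \leq L$ for every $\bar z \in K$, and by taking $L$ slightly larger (or noting the bound is already uniform) we get the strict inequality $\|g\|_m < L$ with $g = g_{\bar z}$.

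I do not expect any serious obstacle here; the statement is essentially a compactness-plus-explicit-formula exercise. The one point that requires the hypothesis ``every element of $K$ is one-to-one'' in an essential way is the lower bound $\eta$ on $\min_{i<l}\vert z_i - z_l\vert$ — without it the interpolation polynomial would blow up as two nodes collide, so this is where compactness of $K$ together with one-to-one-ness is genuinely used. A minor caveat: if $l = 0$ the product is empty, $g \equiv 1$ works (only the condition $g(z_0)=0$ would be requested, but with $l=0$ there is no such condition... actually $z_l = z_0$ and we need $g(z_0)=0$, so one instead takes $g \equiv 0$); this degenerate case should be mentioned in passing but causes no difficulty. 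The write-up is just: define $g_{\bar z}$, check the three interpolation properties, extract $\eta$ and $R$ from compactness, bound the product.
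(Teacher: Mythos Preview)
Your overall strategy --- write down an explicit interpolation polynomial and extract a uniform bound from compactness --- is exactly the paper's approach. But your formula is wrong. The polynomial
\[
g_{\bar z}(z) = \prod_{i<l} \frac{z - z_l}{z_i - z_l} = \frac{(z-z_l)^l}{\prod_{i<l}(z_i - z_l)}
\]
does satisfy $g_{\bar z}(z_l)=0$, but \emph{not} $g_{\bar z}(z_i)=1$ for each $i<l$: at $z=z_j$ the factor indexed by $i\neq j$ equals $\frac{z_j-z_l}{z_i-z_l}$, not $1$. (What you have written is essentially a scalar multiple of $(z-z_l)^l$; it cannot hit the value $1$ at $l$ distinct points unless $l\leq 1$.) You may be thinking of the Lagrange basis polynomial $\prod_{i<l}\frac{z-z_i}{z_l-z_i}$, which does the \emph{opposite} interpolation ($1$ at $z_l$, $0$ at each $z_i$).

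The fix is immediate: use the actual Lagrange interpolating polynomial
\[
g_{\bar z}(z)=\sum_{i<l}\frac{z-z_l}{z_i-z_l}\prod_{\substack{j<l\\ j\neq i}}\frac{z-z_j}{z_i-z_j},
\]
which the paper writes down and which does satisfy $g_{\bar z}(z_i)=1$ for $i<l$ and $g_{\bar z}(z_l)=0$. Your compactness argument then goes through, but note that your explicit bound must be adjusted: you now need a uniform lower bound on $\min_{i\neq j}|z_i-z_j|$ over all pairs, not just on $\min_{i<l}|z_i-z_l|$. This is still available from compactness and injectivity; alternatively, as the paper does, simply observe that $\bar z\mapsto g_{\bar z}$ is continuous into $(\mathcal{H}(\mathbb{C}),\|\cdot\|_m)$ and invoke compactness of $K$ directly. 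Your treatment of $l=0$ (take $g\equiv 0$) is fine.
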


\begin{proof}
Consider an interpolation formula such as 

    $$g(\bar z, z) = \sum_{i < k} \frac{(z-z_k)}{(z_i - z_k)} \prod_{\substack{j < k \\ j \neq i}} \frac{(z- z_j)}{(z_i - z_j)}, $$ and simply note that $\bar z \mapsto g(\bar z, \cdot)$ is a continuous map from $K$ to $\mathcal{H}(\mathbb{C})$ in the norm $\| \cdot \|_m$. The claim follows from the compactness of $K$.
\end{proof}

 \begin{lemma}
     Let $H_0, \dots, H_n$ be such that $\mathbb{Q}(H_0) \times \dots \times \mathbb{Q}(H_n)$ is ccc. Let $z \in \mathbb{C}$ be arbitrary and $H_0' \supseteq H_0$ where $\dom H_0' = \dom H_0 \cup \{ z\}$ and $H'_0(z)$ is countable. Then $\mathbb{Q}(H'_0) \times \mathbb{Q}(H_1) \times \dots \times \mathbb{Q}(H_n)$ is ccc. 
 \end{lemma}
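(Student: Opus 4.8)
The plan is to convert an uncountable antichain in $\mathbb{Q}(H'_0)\times\mathbb{Q}(H_1)\times\dots\times\mathbb{Q}(H_n)$ into one in $\mathbb{Q}(H_0)\times\mathbb{Q}(H_1)\times\dots\times\mathbb{Q}(H_n)$, contradicting the hypothesis. Recall that incompatibility of conditions in $\mathbb{Q}(H)$ does not depend on $H$. So suppose $\{(p^i,\bar q^i):i<\omega_1\}$ is an antichain. Since $H'_0(z)$ is countable, after passing to an uncountable subfamily we may assume either that $z\notin a_{p^i}$ for all $i$ — in which case $p^i\in\mathbb{Q}(H_0)$, incompatibility agrees, and $\{(p^i,\bar q^i):i<\omega_1\}$ is already an uncountable antichain in $\mathbb{Q}(H_0)\times\mathbb{Q}(H_1)\times\dots\times\mathbb{Q}(H_n)$, contradicting its ccc — or that $z\in a_{p^i}$ and $f_{p^i}(z)=c$ for all $i$, for a single fixed $c\in H'_0(z)$. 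Assume the latter and put $H_0^c=H_0\cup\{(z,\{c\})\}$, so that each $p^i\in\mathbb{Q}(H_0^c)$.

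For each $i$ let $\hat p^i=(a_{p^i}\setminus\{z\},\,f_{p^i},\,\varepsilon_{p^i},\,m_{p^i})\in\mathbb{Q}(H_0)$; this ``forgetting of $z$'' is an order isomorphism from $\{p\in\mathbb{Q}(H_0^c):z\in a_p\}$ onto a sub-order of $\mathbb{Q}(H_0)$, using that all the $p^i$ agree at $z$ (value $c$). The crux of the argument is to arrange, after further refinement, that this passage creates no new compatibilities, i.e.\ that along the refined family $p^i\perp p^j$ implies $\hat p^i\perp\hat p^j$; equivalently, that any common extension of $\hat p^i$ and $\hat p^j$ in $\mathbb{Q}(H_0)$ yields a common extension of $p^i$ and $p^j$ in $\mathbb{Q}(H_0^c)$. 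Granting this, $\{(\hat p^i,\bar q^i):i\text{ in the refined family}\}$ is an uncountable antichain in $\mathbb{Q}(H_0)\times\mathbb{Q}(H_1)\times\dots\times\mathbb{Q}(H_n)$: for $i\neq j$, from $(p^i,\bar q^i)\perp(p^j,\bar q^j)$ we get $\bar q^i\perp\bar q^j$, and we are done, or $p^i\perp p^j$, and hence $\hat p^i\perp\hat p^j$ by the key claim. This contradicts the hypothesis.

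For the refinement and the key claim I would proceed as follows. First thin out so that $\langle a_{p^i}\rangle$ is a $\Delta$-system with root $\rho$ (so $z\in\rho$); $m_{p^i}=m$ and $\varepsilon_{p^i}=\varepsilon$ are constant; $\mathrm{dist}(z,\,a_{p^i}\setminus\{z\})\geq\eta_0$ for a fixed $\eta_0>0$; and, using separability of $(\mathcal{H}(\mathbb{C}),\|\cdot\|_{m'})$ for some $m'>m$ with $z\in B_{m'}(0)$, all $f_{p^i}\!\restriction\! B_{m'}(0)$ lie within $\delta$ of one fixed $g^{*}$, where $\delta>0$ is to be chosen small. (The last clause forces the $f_{p^i}$ to have uniformly bounded derivatives on $B_m(0)$, and any common extension $s=(b,g,\varepsilon_s,l)$ of $\hat p^i,\hat p^j$ can be taken with $g$ close to $f_{p^i}$ on $B_m(0)$, so that $g(z)$ is near $c$.) If $f_{p^i}$ and $f_{p^j}$ disagree somewhere on $\rho$, then — as they agree at $z$ — they disagree on $\rho\setminus\{z\}$, so $\hat p^i\perp\hat p^j$ already and the claim is vacuous; otherwise, given such an $s$ one builds the required common extension of $p^i,p^j$ by correcting $g$ by a small entire $w$ with $w\!\restriction\!((a_{p^i}\cup a_{p^j})\setminus\{z\})=0$ and $w(z)=c-g(z)$, and showing $\|w\|_m$ can be kept small: here one invokes Lemma~\ref{lem:interbound} to produce $w$, or its relevant pieces, with controlled norm, after splitting the points of $a_{p^i}\cup a_{p^j}$ inside $B_m(0)$ from those outside — the latter absorbed into a high power of a linear polynomial that is tiny on $B_m(0)$ — so that the interpolation configurations range over a fixed compact family of tuples with pairwise distinct entries. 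I expect this last point to be the main obstacle: since finite subsets of $\mathbb{C}$ may accumulate, keeping the interpolation constants uniform in $i,j$ (in particular for nodes of $a_{p^i}$ and $a_{p^j}$ that happen to lie close together) is exactly the delicate bookkeeping, and it is precisely here that the countability of $H'_0(z)$ pays off, by letting us fix the single target value $c$ and thereby keep the correction $w$ small.
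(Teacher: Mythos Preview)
Your initial reduction---fixing the common value $c=f_{p^i}(z)$ and passing to $\hat p^i=(a_{p^i}\setminus\{z\},f_{p^i},\varepsilon_{p^i},m_{p^i})$---matches the paper exactly. The gap is precisely where you flag it: the additive correction $w$ cannot be kept uniformly small. You need $w$ to vanish on $(a_{p^i}\cup a_{p^j})\setminus\{z\}$, but the non-root points of $a_{p^i}$ and $a_{p^j}$ can lie arbitrarily close to one another as $i,j$ vary over an uncountable family, so no compact set of one-to-one tuples captures all the relevant interpolation configurations and Lemma~\ref{lem:interbound} gives no uniform bound. Moreover, even if $\|w\|_m$ were controlled, a common extension $s$ of $\hat p^i,\hat p^j$ may already satisfy $\|g-f_{p^i}\|_m$ close to $\varepsilon$, leaving no room to add $w$ and still extend $p^i$. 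Fixing $c$ does not rescue this; that step was already spent in the first paragraph.

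The paper resolves both issues with two moves you are missing. First, it \emph{shrinks} $\varepsilon$ to some $\varepsilon'<\varepsilon/(2L)$ \emph{before} forming the reduced conditions; the common extension $q'$ one obtains from the ccc of $\mathbb{Q}(H_0)\times\cdots\times\mathbb{Q}(H_n)$ then satisfies $\|f_{q'}-f_{p^\gamma}\|_m<\varepsilon'$, creating the slack needed for a correction. Second, the correction is \emph{multiplicative}: one sets $f=f_{p^\gamma}+g\cdot(f_{q'}-f_{p^\gamma})$ where $g$ is an interpolant with $g(z)=0$ and $g\equiv 1$ on $a_{p^\delta}\setminus\{z\}$ only. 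The points of $a_{p^\gamma}\setminus\{z\}$ are handled for free since $f_{q'}-f_{p^\gamma}$ vanishes there, so the interpolation involves just $l+1$ points from a \emph{single} tuple. To make $L$ uniform, the paper first finds an $\omega_1$-accumulation point $\bar z_\beta$ of the tuples $\bar z_\alpha\in\mathbb{C}^l$ and restricts to a compact neighbourhood $O$ of it consisting of one-to-one tuples avoiding $z$; only then is Lemma~\ref{lem:interbound} invoked. One does not need incompatibility of $\hat p^i,\hat p^j$ to persist for \emph{all} pairs---a single compatible pair $\gamma,\delta$ with $\bar z_\gamma,\bar z_\delta\in O$ suffices for the contradiction.
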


 \begin{proof}
     Suppose towards a contradiction that $\langle \bar p_\alpha : \alpha < \omega_1 \rangle$ is an antichain in $\mathbb{Q}(H'_0) \times \mathbb{Q}(H_1) \times \dots \times \mathbb{Q}(H_n)$. Then we may assume without loss of generality that for every $\alpha < \omega_1$, $z \in a_{p_\alpha(0)}$ and $f_{p_\alpha(0)}(z) = y$, for some fixed $y \in H_0'(z)$. Otherwise, we find an uncountable antichain in $\mathbb{Q}(H_0) \times \dots \times \mathbb{Q}(H_n)$. Furthermore, we may assume that $\varepsilon_{p_\alpha(0)} = \varepsilon$, $m_{p_\alpha(0)} = m$, $\vert a_{p_\alpha(0)} \vert = l + 1$ and $a_{p_\alpha(0)} \setminus \{ z \}$ is enumerated by $\bar z_\alpha = \langle z_{\alpha,i} : i < l \rangle$, for every $\alpha$ and some fixed $\varepsilon$, $m$ and $l$. Even more, we can assume that $\|f_{p_\alpha(0)} - f_{p_\beta(0)}\|_m < \frac{\varepsilon}{2}$, for every $\alpha, \beta < \omega_1$. 
     
     Then there is some $\beta < \omega_1$, such that $\bar z_\beta$ is an $\omega_1$-accumulation point of \linebreak $\{ \bar z_\alpha : \alpha < \omega_1 \}$ in $\mathbb{C}^l$, in the sense that for any open neighborhood of $\bar z_\beta$, there are uncountably many $\alpha$ with $\bar z_\alpha$ in said neighborhood.\footnote{When $l=0$, then $\mathbb{C}^{l}$ contains one element, namely the empty sequence, which all $\bar z_\alpha$ then equal to.} Let $O \ni \bar z_\beta$ be a compact neighborhood of $\bar z_\beta$ so that every element of $O$ is one-to-one and does not have $z$ in any coordinate. This is easily possible as $\bar z_\beta$ is one-to-one and $z \notin \{ z_{\beta,i} : i < l \} = a_{p_\beta(0)} \setminus \{z\}$. According to Lemma~\ref{lem:interbound}, and considering $K = O \times \{z\}$, there is $L > 0$ such that for any $\bar z \in O$, there is $g \in \mathcal{H}(\mathbb{C})$ such that $g(z) = 0$, $g(z_i) = 1$, for all $i < l$, and $\| g \|_m < L$. Now let $\varepsilon' < \frac{\varepsilon}{2L}$ and for each $\alpha < \omega_1$, let $\bar p_\alpha'$ be such that $p'_{\alpha}(i) = p_\alpha(i)$, for $i > 0$ and $$ p'_{\alpha}(0) = ( a_{p_\alpha(0)} \setminus \{z\}, f_{p_\alpha(0)}, \varepsilon', m).$$

     Then note that $\bar p'_\alpha \in \mathbb{Q}(H_0) \times \dots \times \mathbb{Q}(H_n)$, for every $\alpha$. Thus there are $\gamma < \delta < \omega_1$ such that $\bar z_{\gamma}, \bar z_{\delta} \in O$ and $\bar q' \leq \bar p'_{\gamma}, \bar p'_{\delta}$, for some $\bar q' \in \mathbb{Q}(H_0) \times \dots \times \mathbb{Q}(H_n)$. Let $g \in \mathcal{H}(\mathbb{C})$ be such that $\| g \|_m < L$, $g(z_{\delta, i})= 1$ for every $i < l$ and $g(z) = 0$. Let $k = f_{q'(0)} - f_{p_{\gamma}(0)}$ and consider $f = f_{p_{\gamma}(0)} + g\cdot k$. Note that \begin{align*}
         \| f - f_{p_{\gamma}(0)} \|_m &= \| g\cdot k \|_m  \leq \| g \|_m \cdot \| k \|_m \\ &<L \cdot \frac{\varepsilon}{2L} = \frac{\varepsilon}{2}.
     \end{align*}
As $\| f_{p_{\gamma}(0)} - f_{p_{\delta}(0)}\| < \frac{\varepsilon}{2}$, we immediately find that $$\| f - f_{p_{\delta}(0)} \|_m < \frac{\varepsilon}{2} + \frac{\varepsilon}{2} = \varepsilon.$$

Also, note that since $f_{q'(0)}(z_{\gamma, i}) = f_{p_{\gamma}(0)}(z_{\gamma, i})$, $k(z_{\gamma, i}) = 0$ for every $i < l$. Then by choice of $g$ and the fact that $\bar q' \leq \bar p'_{\gamma}, \bar p'_{\delta}$, it is easy to compute that $f_{p_{\gamma}(0)} \restriction a_{p_{\gamma}(0)} \subseteq f$ and $f_{p_{\delta}(0)} \restriction a_{p_{\delta}(0)} \subseteq f$. 
Thus, if we let $\bar q$ be such that $q(i) = q'(i)$ for $i > 0$, and $$q(0) = (a_{p_{\gamma}(0)} \cup a_{p_{\delta}(0)}, f, \varepsilon^*, m),$$

for some small enough $\varepsilon^* < \varepsilon$, we have that $\bar q \leq \bar p_\gamma, \bar p_\delta$, contradicting our initial assumption. 
 \end{proof}

 Note that by a simple inductive argument, the previous lemma implies that we can extend simultaneously each $H_i$ in countably many arbitrary points with arbitrary countable sets of values and preserve the ccc: 

 \begin{prop}\label{prop:extccc}
      Let $H_0, \dots, H_n$ be such that $\mathbb{Q}(H_0) \times \dots \times \mathbb{Q}(H_n)$ is ccc. Let $H'_0 \supseteq H_0, \dots, H'_n \supseteq H_n$ be such that for every $i \leq n$, \begin{enumerate}
          \item $\dom H'_i \setminus \dom H_i$ is countable, 
          \item and for any $z \in \dom H'_i \setminus \dom H_i$, $H_i'(z)$ is countable.
      \end{enumerate}
      Then $\mathbb{Q}(H'_0) \times \dots \times \mathbb{Q}(H'_n)$ is ccc. 
 \end{prop}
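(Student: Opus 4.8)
The plan is to iterate the preceding lemma (the single‑point case) $\omega$ many times --- once for each new point that is being added to one of the $H_i$ --- and then take a limit, which is where the only real (but routine) work lies.

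First I would set $N \le \omega$ and fix an enumeration $\langle (i_j, z_j) : j < N \rangle$ of the set $\bigcup_{i \le n} \{i\} \times (\dom H'_i \setminus \dom H_i)$, which is countable by hypothesis~(1). Define a sequence $\langle (H^j_0, \dots, H^j_n) : j \le N \rangle$ of $(n{+}1)$-tuples of partial functions from $\mathbb{C}$ to $\mathcal{P}(\mathbb{C})$ by $H^0_i = H_i$ for all $i$, and at step $j$ by letting $H^{j+1}_{i_j} = H^j_{i_j} \cup \{(z_j, H'_{i_j}(z_j))\}$ and $H^{j+1}_i = H^j_i$ for $i \ne i_j$. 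By construction each $\langle H^j_i : j \le N\rangle$ is $\subseteq$-increasing with union $H'_i$, and $H^{j+1}_{i_j}$ arises from $H^j_{i_j}$ by adjoining the single point $z_j$ with the \emph{countable} value set $H'_{i_j}(z_j)$ (countable by hypothesis~(2)). A straightforward induction on $j$, applying the preceding lemma at each step --- to the $i_j$-th coordinate, which is legitimate because the ccc-ness of a finite product does not depend on the order of its factors --- then shows that $\mathbb{Q}(H^j_0) \times \dots \times \mathbb{Q}(H^j_n)$ is ccc for every $j \le N$; the base case $j=0$ is the hypothesis.

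Finally I would pass to the limit. Since every condition $p$ has $a_p$ finite, one checks that $\mathbb{Q}(H'_i) = \bigcup_{j < N} \mathbb{Q}(H^j_i)$: given $p \in \mathbb{Q}(H'_i)$, the finite set $a_p$ meets $\dom H'_i \setminus \dom H_i$ in finitely many points, each added by some stage, so $a_p \subseteq \dom H^j_i$ for large enough $j$, and then $f_p(z) \in H'_i(z) = H^j_i(z)$ for all $z \in a_p$. Hence $\mathbb{Q}(H'_0) \times \dots \times \mathbb{Q}(H'_n) = \bigcup_{j < N}\bigl(\mathbb{Q}(H^j_0) \times \dots \times \mathbb{Q}(H^j_n)\bigr)$ is an increasing union of subforcings (recall that the ordering and the incompatibility relation of $\mathbb{Q}(H)$ are independent of $H$, so each $\mathbb{Q}(H^j_i)$ is a subforcing of $\mathbb{Q}(H'_i)$ in the sense of Definition~\ref{def:subforcing}, and therefore so are the products). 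If $A$ were an antichain of size $\aleph_1$ in the product of the $\mathbb{Q}(H'_i)$, then $A = \bigcup_{j<N}\bigl(A \cap \prod_{i\le n}\mathbb{Q}(H^j_i)\bigr)$ is a countable union, so by regularity of $\omega_1$ some $A \cap \prod_{i\le n}\mathbb{Q}(H^j_i)$ is uncountable; being an antichain in a subforcing, it is an antichain in $\prod_{i \le n}\mathbb{Q}(H^j_i)$, contradicting the previous paragraph.

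The expected main obstacle is precisely this last limit step, i.e.\ arguing that an uncountable antichain in the full product must concentrate inside one of the finite-stage products; everything else is bookkeeping, and all of the analytic content is already encapsulated in the preceding lemma. (If $N$ is finite, or if $\dom H'_i = \dom H_i$ for all $i$, the limit step is vacuous.)
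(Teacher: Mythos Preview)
Your proposal is correct and is exactly the ``simple inductive argument'' the paper alludes to; the paper does not spell out the limit step, but your pigeonhole argument via the regularity of $\omega_1$ and the fact that incompatibility in $\mathbb{Q}(H)$ is independent of $H$ is precisely what is needed.
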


 \subsection{Tools for the limit step}

\begin{lemma}\label{lem:modelapprox}
Let $M\subseteq V$ be a transitive model of $\ZF^-$ (possibly a proper class). Let $f \in \Hol$, $a \in [\mathbb{C}]^{<\omega} \cap M$ such that $f \restriction a \in M$, $K \in [\mathbb{C} \setminus a]^{<\omega} \cap M$ such that $f \restriction K$ is constant, $\varepsilon > 0$ and $m \in \omega$. Moreover, for any $\xi \in \mathbb{C}$, let $$g_\xi(z) = \xi \sum_{x \in K} \prod_{y \in A \setminus \{x\}} \frac{(z-y)}{(x-y)},$$ where $A = a \cup K$. Then there is $\tilde f \in \Hol \cap M$ and $\delta > 0$ such that $f\restriction a \subseteq \tilde f$, $\tilde f \restriction K$ is constant, $\| \tilde f - f \|_m < \varepsilon$ and 
\begin{enumerate}
    \item $\forall \xi \in B_\delta(0) ( \| \tilde f + g_\xi  - f \|_m < \varepsilon)$,
    \item $ \exists \xi \in B_\delta(0) \forall z \in K (\tilde f(z) + g_\xi(z) = f(z))$.
\end{enumerate}
Whenever $f \in M$, we can assume that $\tilde f = f$. 
\end{lemma}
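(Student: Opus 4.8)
The plan is to first handle the easy case and then reduce the general case to it. If $f \in M$, then setting $\tilde f = f$ works immediately: take $\delta$ so small that $\|g_\xi\|_m < \varepsilon - \|f - f\|_m = \varepsilon$ for all $\xi \in B_\delta(0)$ (possible since $\xi \mapsto \|g_\xi\|_m$ is continuous and vanishes at $0$), which gives (1); and $\xi = 0$ witnesses (2) since $g_0 \equiv 0$ and $\tilde f \restriction K = f \restriction K$ is constant by hypothesis. Note here that the Lagrange-type combination $g_\xi$ satisfies $g_\xi \restriction a = 0$ (each summand has a factor $(z-y)$ for $y$ ranging over all of $a$ once we also subtract $x \in K$... more precisely, for $x\in K$ and $z \in a$, the product over $y \in A\setminus\{x\}$ includes the factor $(z-z)=0$), and $g_\xi(x) = \xi$ for each $x \in K$; this is the standard interpolation identity and is the reason $g_\xi$ is the right perturbation term. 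So adding $g_\xi$ never disturbs values on $a$ and shifts every value on $K$ by the same amount $\xi$, keeping $\tilde f + g_\xi$ constant on $K$.

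For the general case, the idea is that $f$ itself need not lie in $M$, but only a tiny finite amount of information about it matters: the values $f \restriction a$ (which are in $M$), the common value $c := f(x)$ for $x \in K$, and an approximation of $f$ on $B_m(0)$ to within $\varepsilon$. The key point is that $f \restriction a \in M$ and the constant $c$ can be taken in $M$ only if... actually $c = f(x)$ for $x \in a' $ where... wait, $K \cap a = \emptyset$, so $c$ is genuinely a new real. The right move: first note that by Proposition~\ref{prop:convergence}-type reasoning (polynomials are dense in $\Hol$ in each $\|\cdot\|_m$), $\Hol \cap M$ is $\|\cdot\|_m$-dense in $\Hol$; but we need more, namely a member of $\Hol \cap M$ agreeing with $f$ exactly on the finite set $A = a \cup K$. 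So I would instead proceed as follows: work inside $M$. Since $f \restriction a \in M$ and $f$ is constant on $K$ with value $c$, and since $c$ is determined (in $V$) but we want $\tilde f \in M$, we cannot demand $\tilde f(x) = c$. Re-reading the statement: (2) asks only that $\exists \xi \in B_\delta(0)$ with $\tilde f(z) + g_\xi(z) = f(z) = c$ for $z \in K$; i.e., $\tilde f(x) = c - \xi$ for all $x \in K$ (consistent since $g_\xi$ is constant $=\xi$ on $K$). So we need $\tilde f$ constant on $K$, with constant value within $\delta$ of $c$, and $\tilde f \restriction a = f \restriction a$, and $\|\tilde f - f\|_m < \varepsilon$. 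Such $\tilde f$ exists in $M$: pick a polynomial $P \in M$ with rational-in-the-field-generated-by-$(a \cup f[a])$ coefficients, interpolating the finitely many constraints ``$P \restriction a = f \restriction a$ and $P \restriction K \equiv $ (some value $c' \in M$ close to $c$)'' — possible because these are finitely many point constraints with one free parameter $c'$, and we may choose $c'$ rational hence in $M$, close enough to $c$ that also $\|P - f\|_m < \varepsilon$ can be arranged by additionally controlling $P$ on $B_m(0)$. Making $\|P-f\|_m < \varepsilon$ simultaneously with exact interpolation on $A$ uses that the affine space of entire functions agreeing with $f$ on $A$ is $\|\cdot\|_m$-dense in $\Hol$ (again polynomial approximation: take any $Q \in \Hol \cap M$ with $\|Q - f\|_m$ tiny, then correct $Q$ on $A$ by adding a bounded-on-$B_m(0)$ interpolating polynomial as in Lemma~\ref{lem:interbound}, whose sup-norm on $B_m(0)$ is controlled by the small interpolation errors). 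Set $\tilde f = $ this corrected function; it lies in $M$.

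Having $\tilde f \in M$ with $\tilde f \restriction a = f \restriction a$, $\tilde f$ constant on $K$ with value $c' = c - \xi_0$ for some small $\xi_0$ (we arranged $|c - c'| < \varepsilon$, and by shrinking we can get $|c - c'|$ as small as we like, in particular less than whatever $\delta$ we end up choosing), and $\|\tilde f - f\|_m < \varepsilon$ with strict inequality, I then choose $\delta > 0$ small enough that $\delta > |\xi_0|$, that $B_\delta(0) \ni \xi_0$, and that $\|g_\xi\|_m < \varepsilon - \|\tilde f - f\|_m$ for all $\xi \in B_\delta(0)$ (continuity of $\xi \mapsto \|g_\xi\|_m$ and its vanishing at $0$). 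Then (1) follows from the triangle inequality $\|\tilde f + g_\xi - f\|_m \le \|\tilde f - f\|_m + \|g_\xi\|_m < \varepsilon$, and (2) is witnessed by $\xi = \xi_0$, since for $z \in K$ we have $\tilde f(z) + g_{\xi_0}(z) = c' + \xi_0 = c = f(z)$. The main obstacle — and the only genuinely delicate point — is arranging simultaneously that $\tilde f \in M$, that it exactly interpolates $f \restriction a$ and is exactly constant on $K$, and that $\|\tilde f - f\|_m < \varepsilon$ strictly; this is handled by the two-stage approximation (first approximate $f$ by something in $M$ in $\|\cdot\|_m$, then apply the bounded-interpolation trick of Lemma~\ref{lem:interbound} to fix the finitely many values on $A$ without spoiling the sup-norm estimate), together with the freedom to take the constant value $c'$ rational and arbitrarily close to $c$.
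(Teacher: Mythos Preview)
Your proposal is correct and follows essentially the same approach as the paper. The paper streamlines one point you wrestle with: by first assuming without loss of generality that $A \subseteq B_m(0)$ (just increase $m$), it gets $|\xi_0| = |c - c'| \leq \|\tilde f - f\|_m$ for free, so it can fix $\delta < \varepsilon/2$ with $\|g_\xi\|_m < \varepsilon/2$ for all $\xi \in B_\delta(0)$ \emph{before} constructing $\tilde f$ (requiring only $\|\tilde f - f\|_m < \delta$), thereby avoiding the back-and-forth you resolve by ``shrinking $|c - c'|$''.
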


Before we continue to the proof let us note that $g_\xi \in \Hol$ is simply a function such that $g_\xi \restriction K$ is constantly $\xi$ and $g_\xi(z) = 0$, for $z \in a$. Also, the smaller the absolute value of $\xi$ is, the smaller $\| g_\xi \|_m$ gets.

\begin{proof}
We may assume, without losing generality, that $m$ is large enough such that $A \subseteq B_m(0)$. Let $\delta < \frac{\varepsilon}{2}$ be small enough so that for any $\xi \in B_\delta(0)$, $\| g_\xi \|_m < \frac{\varepsilon}{2}$. Since $f \restriction a \in M$, we can easily find a function $\tilde f \in \Hol \cap M$ such that $f \restriction a \subseteq \tilde f$, $\tilde f$ is constant on $K$ and $\| \tilde f - f \|_m < \delta < \varepsilon$. If $f \in M$ already, we may simply use $\tilde f = f$. For (1), \begin{align*}
    \| \tilde f + g_\xi  - f \|_m &\leq \| \tilde f  - f \|_m + \| g_\xi \|_m \\ &< \delta + \delta < \frac{\varepsilon}{2} + \frac{\varepsilon}{2} = \varepsilon.
\end{align*}

For (2), define $\xi = f(z)- \tilde f(z)$, for any, equivalently every, $z \in K$. Then $\xi \in B_{\delta}(0)$ and for any $z \in K$, $\tilde f(z) + g_\xi(z) = f(z).$
\end{proof}

For a set $F$ of finite partial functions on $\mathbb{C}$, let $$\mathbb{Q}_0(F) := \{ p \in \mathbb{Q} : f_p \restriction a_p \in F \}$$ Forcings $\mathbb{Q}(H)$ are a particular type of forcings of the form $\mathbb{Q}_0(F)$. As with $\mathbb{Q}(H)$, the interpretation of $\mathbb{Q}_0(F)$ in any transitive model that contains $F$ is easily seen to be a dense subset of $\mathbb{Q}_0(F)$ as interpreted in $V$. The following lemma will be formulated for this more general type of forcing, since that will be needed in Section~\ref{sec:properuniv}. The proof of this core lemma originates in ideas fleshed out in Burke's \cite{Burke2009}. For instance, Claim~2.8 in the aforementioned paper corresponds roughly to Claim~\ref{clm:claim} below. Burke remarks that this is a version of an argument by Shelah from \cite{Shelah1980}.

\begin{lemma}\label{lem:cohen}
Let $M \subseteq V$ be as in Lemma~\ref{lem:modelapprox}, $F \in M$, $z \in \mathbb{C}\cap (M \setminus \bigcup_{h \in F} \dom h)$ and $c$ a Cohen real over $M$.\footnote{I.e. $c$ is in any open dense subset of $\mathbb{C}$ coded in $M$.} Furthermore, let $F' = F \cup \{ h \cup \{(z,c)\} : h \in F \}$ and $\mathbb{P} \in M$ be a forcing notion that is dense in a forcing $\mathbb{P}'\in V$. Then $$\mathbb{Q}_0(F) \times \mathbb{P} \lessdot_M \mathbb{Q}_0(F') \times \mathbb{P}'.$$
\end{lemma}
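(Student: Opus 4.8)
The plan is to verify the definition of $\lessdot_M$ directly: we must show that every predense set $E \in M$ of $\mathbb{Q}_0(F) \times \mathbb{P}$ remains predense in $\mathbb{Q}_0(F') \times \mathbb{P}'$. So fix such an $E \in M$ and fix an arbitrary condition $(q, r) \in \mathbb{Q}_0(F') \times \mathbb{P}'$; I must find an element of $E$ compatible with it. First, since $\mathbb{P}$ is dense in $\mathbb{P}'$, I may extend $r$ to some $r' \in \mathbb{P}$, so it suffices to handle $(q, r')$ with $r' \in \mathbb{P}$. The condition $q = (a_q, f_q, \varepsilon_q, m_q)$ has $f_q \restriction a_q \in F'$, so either $f_q \restriction a_q \in F$ already — in which case $(q,r') \in \mathbb{Q}_0(F) \times \mathbb{P} \in M$ and predensity of $E$ in that poset (which holds in $V$ since $M \models$ ``$E$ predense'' and predensity is upward absolute here, being witnessed by the same $M$-predense set) finishes it — or $z \in a_q$ and $f_q(z) = c$, with $f_q \restriction (a_q \setminus \{z\}) \in F$.

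In the nontrivial case, the idea is to use genericity of $c$ over $M$ to ``absorb'' the constraint $f_q(z) = c$. Working in $M$, consider all the data of $q$ except the value $c$: namely $a := a_q \setminus \{z\}$, the finite function $f_q \restriction a \in F$, the set $K := \{z\}$ on which we want a prescribed constant value, and the parameters $\varepsilon_q, m_q$; also $z$ itself and $r'$. I would define, in $M$, the set $U$ of all $\zeta \in \mathbb{C}$ such that the condition obtained from $q$ by replacing $c$ with $\zeta$ — more precisely, some condition $(a \cup \{z\}, g, \varepsilon, m) \in \mathbb{Q}_0(F')$ extending the ``$\zeta$-variant'' — is, together with $r'$, compatible with some element of $E$. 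The key claim (the analogue of Burke's Claim~2.8 / Shelah's argument) is that $U$ is open dense in $\mathbb{C}$: density uses Lemma~\ref{lem:modelapprox} applied inside $M$ to the $\mathbb{Q}_0(F)$-part, producing for each target a function $\tilde f \in \Hol \cap M$ interpolating the $a$-values and taking a nearby constant value at $z$, so that the modified condition lies in $\mathbb{Q}_0(F) \times \mathbb{P} \in M$ and hence, $E$ being predense there (in $M$), meets $E$; openness follows because ``compatible with some $e \in E$'' is witnessed by finitely many norm-inequalities with strict slack (by the very definition of extension in $\mathbb{Q}$ via $\|\cdot\|$-bounds and the rational $\varepsilon$), which persist under small perturbation of $\zeta$. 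Since $U \in M$ is open dense and $c$ is Cohen over $M$, we get $c \in U$, i.e. the $c$-variant — which is exactly (a strengthening of) $q$ — together with $r'$ is compatible with some $e \in E$. That is what we needed.

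I expect the main obstacle to be the density half of the claim ``$U$ is dense'', because that is where the special geometry of entire functions enters: one cannot freely change the value at $z$ without controlling $\|g - f_q\|_{m_q}$, and one must simultaneously keep $g \restriction (a \cup \{z\}) \in F'$, stay inside $\mathbb{Q}_0(F')$, and land in the $M$-version of the poset so that $M$-predensity of $E$ can be invoked. This is precisely the role of Lemma~\ref{lem:modelapprox}: given a desired value $\zeta$ at $z$ close to $c$'s original role, it supplies $\tilde f \in M$ with the right restriction to $a$, constant (equal to the right thing) on $K = \{z\}$, and $\|\tilde f - f_q\|_{m_q}$ as small as we like, plus the interpolation function $g_\xi$ vanishing on $a$ and constant $\xi$ on $K$ that lets us fine-tune the value at $z$ to be exactly $\zeta$ while keeping the norm bound; choosing the slack in $\varepsilon$ appropriately then makes the resulting condition an extension of the relevant approximation of $q$ and a member of $\mathbb{Q}_0(F)^M$ (or $\mathbb{Q}_0(F')^M$, dense in $\mathbb{Q}_0(F')^V$ by the remark before the lemma). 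Once this is set up carefully, openness and the final genericity step are routine, and the case split on whether $z \in a_q$ handles everything. (For the converse-style reading one should also note $\mathbb{Q}_0(F) \times \mathbb{P}$ is genuinely a subforcing of $\mathbb{Q}_0(F') \times \mathbb{P}'$, which is immediate since incompatibility in $\mathbb{Q}$ does not depend on $F$ and $\mathbb{P} \subseteq \mathbb{P}'$ preserves compatibility by density; this is the analogue of the remark after the definition of $\mathbb{Q}(H)$.)
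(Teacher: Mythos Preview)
Your overall strategy---reduce to the case $z \in a_q$ with $f_q(z)=c$, build a dense open $U$ coded in $M$, and invoke $c \in U$ by genericity---is the paper's. But there is a genuine gap in how you set up $U$. You write ``working in $M$, consider all the data of $q$ except the value $c$'' and then define $U$ via ``the condition obtained from $q$ by replacing $c$ with $\zeta$''. That condition still carries the entire function $f_q$, and $f_q$ need not lie in $M$ (only the finite piece $f_q \restriction a_q \in F \subseteq M$ does). So either your $U$ tacitly refers to $f_q$ and is therefore not coded in $M$, or it uses only the finite restriction data, in which case $c \in U$ produces merely some condition with the correct values on $a_q$ but with no control on $\|\,\cdot\, - f_q\|_{m_q}$---and then you cannot conclude it extends $q$.

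The paper resolves this by invoking Lemma~\ref{lem:modelapprox} \emph{before} defining $U$, not inside the density step. It applies the lemma once to $f=f_q$ to obtain $\tilde f \in M$ with $\tilde f \restriction a = f_q \restriction a$ and $\|\tilde f - f_q\|_{m_q} < \varepsilon_q/4$, forms the $M$-condition $\tilde p_0 = (a,\tilde f,\varepsilon_q/2,m_q)$, and only then defines $U$---localised to a small basic open $O \ni c$ inside $B_\delta(\tilde f(z))$, not in all of $\mathbb{C}$---as the set of $d$ for which some $\bar q \leq (\tilde p_0, r')$ below $E$ satisfies $f_{q_0}(z)=d$. Now every parameter in the definition of $U$ lives in $M$. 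The restriction to $O$ is precisely what makes the density argument go through (it keeps the perturbations $g_\xi$ small), and the upfront $\varepsilon_q/4$--$\varepsilon_q/2$ bookkeeping is exactly what lets one pass back, via the triangle inequality, from $q_0 \leq \tilde p_0$ to an honest extension of the original $q$ once $f_{q_0}(z)=c$. You correctly name Lemma~\ref{lem:modelapprox} as the key tool, but you deploy it only for density; the essential ``move into $M$ first, then bridge back out'' use is missing from your sketch.
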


\begin{proof}
It is easy to see that $\mathbb{Q}_0(F) \times \mathbb{P}$ is a subforcing of $\mathbb{Q}_0(F') \times \mathbb{P}'$ (the incompatibility relation is preserved). Now let $E \in M$, $E \subseteq \mathbb{Q}_0(F) \times \mathbb{P}$ be predense in $\mathbb{Q}_0(F) \times \mathbb{P}$ and suppose towards a contradiction there exists $\bar p = (p_0, p_1) \in \mathbb{Q}_0(F') \times \mathbb{P}'$, such that $\bar p \perp E$, where $z \in a_{p_0}$ and so $f_{p_0}(z) = c$. By extending $\bar p$, we may assume without loss of generality that $p_1 \in \mathbb{P}$ and that $a_{p_0} \subseteq B_{m_{p_0}}(0)$. 

Let $a := a_{p_0} \setminus \{ z \}$, $K := \{z\}$, $f := f_{p_0}$, $\varepsilon := \frac{\varepsilon_{p_0}}{4}$, $m := m_{p_0}$ and apply Lemma~\ref{lem:modelapprox} to find $\tilde f \in M$ and $\delta > 0$ as in the conclusion of the lemma. Let $\tilde p_0 := (a, \tilde f, \frac{\varepsilon_{p_0}}{2}, m)$. Then $(\tilde p_0, p_1) \in (\mathbb{Q}_0(F) \times \mathbb{P}) \cap M$. Since $c \in B_{\delta}(\tilde f(z))$, we may find a basic open set $O \subseteq B_{\delta}(\tilde f(z))$ such that $c \in O$. 

In the following, for a condition $p$ and a subset $E$ of a poset, we write $p \leq E$ to mean that $p$ extends some element of $E$.

\begin{claim}\label{clm:claim}
There is a dense open set $U \subseteq O$ coded in $M$ so that for every $d \in U$, there is $\bar q \in\mathbb{Q}_0(F) \times \mathbb{P}$ with $\bar q \leq E,(\tilde p_0,  p_1)$ and $f_{q_0}(z) = d$.
\end{claim}

Once we prove the claim we are done. Namely, as $c \in O$ is Cohen generic over $M$, $c \in U$. Then, according to the claim, there is $\bar q \leq E,(\tilde p_0, p_1)$ such that $f_{q_0}(z) = c$. Letting $r_0 := (a_{r_0}, f_{q_0}, \varepsilon_{q_0}, m_{q_0})$, where $$a_{r_0} := a_{q_0} \cup \{ z\},$$ we clearly have that $\bar r = (r_0, q_1) \leq \bar q$ and $\bar r \in \mathbb{Q}_0(F') \times \mathbb{P}'$. Moreover, we have that $r_0 \leq p_0$ and thus $\bar r \leq \bar p, E$: \begin{align*}
    \| f_{p_0} - f_{r_0} \|_{m_{p_0}} &= \| f_{p_0} - f_{q_{0}} \|_{m} \leq \| f_{p_0} - \tilde f \|_{m} + \| \tilde f - f_{q_0} \|_{m} \\ &< \frac{\varepsilon_{p_0}}{4} + \left(\frac{\varepsilon_{p_0}}{2} - \varepsilon_{q_0}\right) = \frac{3 }{4}\varepsilon_{p_0} - \varepsilon_{r_0} \\ &< \varepsilon_{p_0} - \varepsilon_{r_0}.
\end{align*}

This contradicts the assumption that $\bar p \perp E$.

\begin{proof}[Proof of Claim.]
Work in $M$. Let $O_0 \subseteq O$ be an arbitrary non-empty open set. We will find a non-empty open set $O_1 \subseteq O_0$ that will be included in $U$. Let $e \in O_0$ be an arbitrary rational complex number. Then we find $\xi \in B_{\delta}(0)$ such that $f_{\xi}(z) = e$, where $$f_{\xi} = \tilde f + g_{\xi}$$ and $g_{\xi}$ is the function from the statement of Lemma~\ref{lem:modelapprox}. Then, by (1) of the lemma, \begin{align*}
\varepsilon^* := \| f_{\xi} - \tilde f \|_{m_{p_0}} &\leq \| f_{\xi} - f_{p_0} \|_{m} + \| f_{p_0} - \tilde f \|_{m}  \\ &<\frac{\varepsilon_{p_0}}{4} + \frac{\varepsilon_{p_0}}{4} = \varepsilon_{\tilde p_0}. \end{align*}

Consider a condition $q'_0 = (a, f_{\xi}, \varepsilon_{q'_0}, m) \in \mathbb{Q}_0(F)$ where $\varepsilon_{q'_0} < \varepsilon_{\tilde p_0} - \varepsilon^*$ and $\varepsilon_{q'_0}$ is small enough so that $B_{2\varepsilon_{q'_0}}(e) \subseteq O_0$. Then $q'_0 \leq \tilde p_0$ as $$\| f_{\xi} - \tilde f \|_{m_{\tilde p_0}} = \varepsilon^* = \varepsilon_{\tilde p_0} - ( \varepsilon_{\tilde p_0} - \varepsilon^*) < \varepsilon_{\tilde p_0} - \varepsilon_{q'_0}.$$ 

In particular, $(q'_0, p_1) \leq (\tilde p_0, p_1)$. Now let $\bar q'' \in \mathbb{Q}_0(F) \times \mathbb{P}$ be such that $\bar q'' = (q''_0, q''_{1}) \leq E, (q'_0, p_1)$. Let $\gamma \in (0,\varepsilon_{q'_0})$ be small enough so that for any $v \in B_{\gamma}(0)$ there is an entire function $h_{v}$ with $\|h_{v}\|_{m_{q''_0}} < \varepsilon_{q''_0}$, $h_{v}(z) = v$ and $h_{v} \restriction a_{q''_0}$ constantly equals $0$ (e.g. using a similar formula as in Lemma~\ref{lem:modelapprox}). 

As $\| f_{q''_0} - f_{\xi} \|_{m_{q'_0}} < \varepsilon_{q'_0}$, we have that for any $v \in B_{\gamma}(0)$ and $h_v$ as above, \begin{align*}\| (f_{q''_0} + h_{v}) - f_\xi \|_{m_{q'_0}} & \leq \|f_{q''_0} - f_{\xi} \|_{m_{q'_0}} + \|h_{v} \|_{m_{q'_0}} \\ &< \varepsilon_{q'_0} + \varepsilon_{q''_0} \leq  2\varepsilon_{q'_0}.\end{align*}

Now we let $O_1 := B_\gamma(f_{q''_0}(z)) \subseteq B_{2\varepsilon_{q'_0}}(e) \subseteq O_0$. The inclusion follows, since for any $d \in B_\gamma(f_{q''_0}(z))$, $\vert d - e \vert = \vert f_{q''_0}(z) + h_v(z) - f_\xi(z) \vert < 2\varepsilon_{q'_0}$ for some $v \in B_\gamma(0)$. $U$ is constructed in $M$ as the union of all sets $O_1$ that we obtain in this way.

Let us check that this works. So working in $V$, let $d \in O_1$ be arbitrary. Then there is $v \in B_\gamma(0)$ and $h_v$ as before so that $f_{q''_0}(z) + h_v(z) = d$.

Let $q_0 = (a_{q_0}, f_{q_0}, \varepsilon_{q_0}, m_{q_0})$, where $a_{q_0} = a_{q''_0}$, $f_{q_0} = f_{q''_0} + h_v$, $$\varepsilon_{q_0} < \varepsilon_{q''_0} - \| f_{q_0} - f_{q''_0} \|_{m_{q''_0}}$$ and $m_{q_0} = m_{q''_0}$. Then $f_{q''_0} \restriction a_{q''_0} = f_{q_0} \restriction a_{q_0}$ as $h_{v} \restriction a_{q''_0}$ is constantly $0$. So if we let $\bar q = (q_0, q''_1)$, then $\bar q \in \mathbb{Q}_0(F) \times \mathbb{P}$ and $\bar q \leq \bar q'' \leq E, (\tilde p_0, p_1)$: \begin{align*}
                 \|f_{q_0} - f_{q''_0} \|_{m_{q''_0}} &= \varepsilon_{q''_0} - (\varepsilon_{q''_0} - \| f_{q_0} - f_{q''_0} \|_{m_{q''_0}})  \\ &< \varepsilon_{q''_0} - \varepsilon_{q_0}.                                    \end{align*}

Moreover, $f_{q_0}(z) = d$ as required. This finishes the proof of the claim.\end{proof}\end{proof}

\begin{prop}\label{prop:cohen}
Let $M \subseteq V$ be as in Lemma~\ref{lem:modelapprox}, $H_0, \dots, H_n \in M$ and $H_0' \supseteq H_0, \dots, H_n' \supseteq H_n$ be partial functions from $\mathbb{C}$ to $\mathcal{P}(\mathbb{C})$ such that 

\begin{enumerate}
    \item $\dom(H'_i) \setminus \dom(H_i) \subseteq M$, for all $i \leq n$, 
    \item for any pairwise distinct pairs $(i_j,z_j)$, $j \leq l$, where $i_j \leq n$, $z_j \in \dom(H'_{i_j}) \setminus \dom(H_{i_j})$, and any sequence $\langle c_j : j \leq l \rangle$, $c_j \in H'_{i_j}(z_j)$, we have that $\langle c_j : j \leq l \rangle$ is mutually Cohen generic over $M$.\footnote{I.e., $\langle c_j : j \leq l \rangle$ is in any open dense subset of $\mathbb{C}^{l+1}$ coded in $M$.}
\end{enumerate}

Then $\mathbb{Q}(H_0) \times \dots \times \mathbb{Q}(H_n) \lessdot_M \mathbb{Q}(H'_0) \times \dots \times \mathbb{Q}(H'_n).$
\end{prop}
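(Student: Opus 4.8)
The plan is to reduce Proposition~\ref{prop:cohen} to iterated applications of Lemma~\ref{lem:cohen}. First I would observe that $\mathbb{Q}(H_i)$ is exactly $\mathbb{Q}_0(F_i)$ where $F_i$ is the set of finite partial functions $h$ with $\dom h \subseteq \dom H_i$ and $h(z) \in H_i(z)$ for all $z \in \dom h$; similarly $\mathbb{Q}(H'_i) = \mathbb{Q}_0(F'_i)$. Since products of the $\mathbb{Q}_0$ forcings are again of the form $\mathbb{Q}_0(F)$ for a suitable $F$ of ``multi-column'' finite partial functions (or one can simply work directly with the finite product, carrying through the extra coordinates $\mathbb{P}, \mathbb{P}'$ in Lemma~\ref{lem:cohen} as a placeholder), the whole product $\mathbb{Q}(H_0) \times \dots \times \mathbb{Q}(H_n)$ is a $\mathbb{Q}_0(F)$-style forcing and likewise for the primed version.

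Next I would enumerate the ``new points'' $\bigcup_{i \le n} \{i\} \times (\dom H'_i \setminus \dom H_i)$; by hypothesis (1) this set lies in $M$, but it may be uncountable, so I need a chain-of-models / direct-limit argument rather than a finite induction. The key point is that $\lessdot_M$ is witnessed by predensity of ground-model predense sets, and any predense set $E \in M$ of the small forcing, together with any single condition $\bar p$ of the big forcing, mentions only finitely many of the new points. So it suffices to handle, for each finite set $\{(i_0,z_0),\dots,(i_l,z_l)\}$ of new points, the forcing obtained by adjoining exactly these points, and to show this is $\lessdot_M$ the original product. That in turn I would get by applying Lemma~\ref{lem:cohen} $l+1$ times, adjoining one new point $(i_j,z_j)$ at a time: at stage $j$, the intermediate forcing $\mathbb{P}^{(j)} \in M$ is dense in the corresponding $\mathbb{P}'^{(j)} \in V$ (again by the ``$\mathbb{Q}_0(F)^M$ is dense in $\mathbb{Q}_0(F)^V$'' observation), and the value $c_j$ is Cohen over $M$ — but here I must be careful, since adding the earlier columns has passed to a generic extension, I need $c_j$ to be Cohen over $M[c_0,\dots,c_{j-1}]$, which is exactly what mutual genericity in hypothesis (2) provides (mutual Cohen genericity of $\langle c_j \rangle$ over $M$ means each $c_j$ is Cohen over $M[\langle c_k : k \ne j\rangle]$). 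Transitivity of $\lessdot_M$ along this finite chain then gives the desired relation for the finite sub-collection of new points.

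Finally I would assemble these finite pieces: writing the big product as a (directed) union of the sub-forcings $\mathbb{Q}_{\text{big}}^{\sigma}$ obtained by adjoining finitely many new points $\sigma$, each is $\lessdot_M$ the small product $\mathbb{Q}_{\text{small}} := \mathbb{Q}(H_0)\times\dots\times\mathbb{Q}(H_n)$, and the small product is dense in all of them in the relevant sense (any condition of the big product extends to, equivalently is already in a condition mentioning the finite set $\sigma$ of its new points). Since every predense $E \in M$ of $\mathbb{Q}_{\text{small}}$ lives in some $\mathbb{Q}_{\text{big}}^{\sigma}$ and any condition of the full big product lies in some $\mathbb{Q}_{\text{big}}^{\sigma}$, predensity of $E$ in each $\mathbb{Q}_{\text{big}}^{\sigma}$ yields predensity in the full big product, i.e. $\mathbb{Q}_{\text{small}} \lessdot_M \mathbb{Q}(H'_0)\times\dots\times\mathbb{Q}(H'_n)$.

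The main obstacle I expect is the bookkeeping in the finite inductive step: after adjoining the first $j$ new columns one has moved from $M$ to (a forcing extension by) $M$ with more finite partial functions available, and one must verify both that the relevant intermediate poset in $M$ stays dense in its $V$-counterpart (so Lemma~\ref{lem:cohen}'s hypothesis ``$\mathbb{P} \in M$ dense in $\mathbb{P}' \in V$'' applies with $\mathbb{P}$ taken to be all the \emph{other} coordinates plus the already-adjoined columns) and that the genericity degree of $c_j$ is high enough over the enlarged model. Unwinding exactly how mutual Cohen genericity of the tuple $\langle c_j : j \le l \rangle$ over $M$ translates into ``$c_j$ Cohen over $M[c_0,\dots,c_{j-1}]$'' — a standard but easy-to-fumble product-forcing fact — is the delicate point; everything else is routine manipulation of the $\lessdot_M$ relation and of the density observations already recorded for $\mathbb{Q}_0(F)$.
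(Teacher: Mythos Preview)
Your proposal is correct and follows essentially the same strategy as the paper: reduce to Lemma~\ref{lem:cohen} via the observation that any single condition $\bar p$ in the large product mentions only finitely many new points, and use mutual genericity to ensure each $c_j$ is Cohen over $M[c_0,\dots,c_{j-1}]$. The paper organizes the induction slightly more efficiently, however. Rather than iterating Lemma~\ref{lem:cohen} $l{+}1$ times and tracking the chain of models $M[c_0,\dots,c_{j-1}]$ (the bookkeeping you correctly flag as the main obstacle), it argues by minimal counterexample: take $l$ minimal such that some $\bar p \perp E$ uses $l{+}1$ new points, and pass in one step to $M[c_0,\dots,c_{l-1}]$ with $H''_i = H_i \cup \{(z_j,\{c_j\}) : j < l,\ i_j = i\}$. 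Minimality of $l$ then gives predensity of $E$ in $\prod_i \mathbb{Q}(H''_i)$ for free, and a single application of Lemma~\ref{lem:cohen} over $M[c_0,\dots,c_{l-1}]$ handles the remaining point $(i_l,z_l)$ and yields the contradiction. Your directed-union framing is correct but unnecessary once one sees that the whole argument can be run relative to the finitely many new points appearing in a fixed $\bar p$.
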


\begin{proof}
    This is an inductive argument using Lemma~\ref{lem:cohen}. Let $E \in M$ be predense in $\mathbb{Q}(H_0) \times \dots \times \mathbb{Q}(H_n)$. Towards a contradiction, suppose that $l$ is minimal such that there is $\bar p \in \mathbb{Q}(H'_0) \times \dots \times \mathbb{Q}(H'_n)$, $\bar p \perp E$ and $\{(i, z) : i \leq n, z \in a_{p_i} \setminus \dom(H_i)\}$ is enumerated by a sequence $\langle (i_j,z_j) : j \leq l \rangle$. Then according to (2), $\langle c_j : j \leq l \rangle = \langle f_{p_{i_j}}(z_j) : j < l \rangle$ is mutually Cohen generic over $M$. In $M[c_0, \dots, c_{l-1}]$, consider $$H''_i = H_i \cup \{ (z_j,\{ c_j \} ) : j < l, i_j = i \},$$ for every $i \leq n$. Then, by the minimality of $l$, $E$ is still predense in $\mathbb{Q}(H''_0) \times \dots \times \mathbb{Q}(H''_n)$. Now apply Lemma~\ref{lem:cohen} once to accommodate the full condition $\bar p$ and reach a contradiction.
\end{proof}

\subsection{The main theorem}

\begin{thm}\label{thm:main}
(GCH) Let $\kappa$ be an infinite cardinal of uncountable cofinality. Then there is a cofinality and cardinal preserving forcing extension in which 

\begin{enumerate}
    \item $2^{\aleph_0} = \kappa$, 
    \item there is a Wetzel family, 
    \item if $\kappa$ is regular, $\MA$ holds.
\end{enumerate}
\end{thm}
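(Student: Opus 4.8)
The plan is to build the desired model by a forcing iteration of length $\kappa$ that successively adjoins entire functions via the posets $\mathbb{Q}(H)$, starting from the almost disjoint family of functions $\langle \sigma_\alpha : \alpha < \kappa \rangle$ provided by Proposition~\ref{prop:Baumgartner}. Concretely, after first forcing with $\mathbb{P}$ from Proposition~\ref{prop:Baumgartner} to arrange $2^{\aleph_0}=\kappa$ together with the $\sigma_\alpha$'s (and $|H(\kappa)|=\kappa$ when $\kappa$ is regular), fix an enumeration $\langle z_\xi : \xi < \kappa\rangle$ of $\mathbb{C}$ (in the extension, or work with a name) and reindex so that the combinatorial constraint ``$f_\alpha(z_\xi)$ lies in a set of size $\mu_\xi$ depending only on $\xi$'' can be enforced. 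The iterand at stage $\alpha$ is $\mathbb{Q}(H_\alpha)$ where $H_\alpha$ is a partial function with $\dom H_\alpha = \{ z_\xi : \xi < \kappa\}$ built recursively so that $H_\alpha(z_\xi)$ is a countable set disjoint from $\{f_\beta(z_\xi) : \beta < \alpha\}$ whenever $\xi$ is ``small'' relative to $\alpha$; the point is that $\sigma_\alpha$ dictates, for each $\xi$, which ``slot'' among the $\mu_\xi$-many available values of $f_\alpha$ at $z_\xi$ gets used, and the almost disjointness of the $\sigma_\alpha$'s guarantees $f_\alpha$ and $f_\beta$ agree on only finitely many of the $z_\xi$ with $\xi$ below both $\alpha$ and $\beta$. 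This is exactly the combinatorial skeleton extracted after Lemma~\ref{lem:sizeWetzel}, now realized by genuine entire functions rather than abstract functions in $\prod_\xi \mu_\xi$.

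The heart of the argument is showing the iteration is ccc, so that cardinals and cofinalities are preserved and, for regular $\kappa$, one can simultaneously do a bookkeeping to force $\MA$ by interleaving arbitrary ccc posets of size $<\kappa$. The successor step is handled by Proposition~\ref{prop:extccc}: passing from $\langle H_\beta : \beta \leq \alpha\rangle$ to the extension that adds $f_{\alpha+1}$ amounts to extending finitely many of the previous $H_\beta$'s in countably many new points by countable value-sets, which preserves ccc of the relevant finite products. For the limit step, and more importantly to see that the whole direct-limit iteration is ccc (via Lemma~\ref{lem:directccc}), one invokes Proposition~\ref{prop:cohen}: one arranges that whenever a ``new'' value $f_\alpha(z_\xi)$ is put into some $H_\beta$, it is Cohen-generic over the appropriate initial model $M \prec H(\theta)$ containing the relevant parameters, so that $\mathbb{Q}(H_0)\times\cdots\times\mathbb{Q}(H_n) \lessdot_M \mathbb{Q}(H'_0)\times\cdots\times\mathbb{Q}(H'_n)$; this Cohen-genericity is what one gets for free from the $\sigma_\alpha$'s living in a forcing extension, since product-style posets add Cohen reals over intermediate models. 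Combined with Lemma~\ref{lem:comsub} to translate these $\lessdot_M$ relations into completeness of the composed iterations, a standard $\Delta$-system / elementary-submodel chain argument then shows the final poset is ccc.

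For clause (3), with $\kappa$ regular and $|H(\kappa)|=\kappa$ from Proposition~\ref{prop:Baumgartner}, one uses the usual bookkeeping for $\MA$: at stage $\alpha$ one either adds the next entire function $f_\alpha$ as above, or, guided by a surjection $\kappa \to \kappa\times\kappa$ that anticipates all ccc-poset names, forces with a prescribed ccc partial order of size $<\kappa$; since a ccc iteration of length $\kappa$ of ccc posets of size $<\kappa$ is ccc and $\kappa$ remains the continuum, the generic meets all the required dense sets. One must check that interleaving the $\mathbb{Q}(H_\alpha)$'s with generic ccc forcings does not destroy either the Wetzel property (the $f_\alpha$ and the value-bound structure persist because the iterands are $\mathbb{Q}(H)$-type, whose generic function is determined by Lemma~\ref{lem:genericfunction} and whose incompatibility is $H$-independent) or the ccc of the tail. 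The main obstacle, and where the delicate work lies, is precisely the global ccc / properness-of-the-$\lessdot$-chain bookkeeping: one must set up the recursive definition of the $H_\alpha$'s and the choice of ``new'' Cohen-generic values coherently enough that every finite subproduct appearing along the way satisfies the hypotheses of Proposition~\ref{prop:cohen} with respect to a suitable submodel, and that the direct-limit hypotheses of Lemma~\ref{lem:directccc} genuinely hold at every limit; verifying that the $\sigma_\alpha$ mechanism supplies exactly the mutual Cohen-genericity demanded by clause (2) of Proposition~\ref{prop:cohen} is the crux, and everything else is assembly of the ingredients already prepared in Sections~\ref{subsec:Baumgartner} and \ref{sec:wetzelcontinuum}.
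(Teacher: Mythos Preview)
Your overall architecture matches the paper's, but there are two genuine gaps: how you secure the mutual Cohen-genericity required by Proposition~\ref{prop:cohen}, and how you handle $\MA$. The Cohen reals are not obtained ``for free from the $\sigma_\alpha$'s living in a forcing extension'': the $\sigma_\alpha$'s are merely combinatorial indices fixed once Proposition~\ref{prop:Baumgartner} has been applied. In the paper, at each stage $\alpha$ one \emph{explicitly} forces with $\mathbb{C}_{\mu_\alpha}$ to add pairwise disjoint countable dense sets $C_{\alpha,\xi}$, $\xi < \mu_\alpha$, of complex numbers mutually Cohen-generic over $V^{\mathbb{P}_\alpha}$, and sets $H_\sigma(z_\delta) = C_{\delta,\sigma(\delta)}$. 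The role of almost disjointness is not that ``$f_\alpha$ and $f_\beta$ agree on only finitely many $z_\xi$'' (automatic for distinct entire functions), but that for any finite collection $\xi_0,\dots,\xi_n$ of \emph{unused} indices there is $\beta$ beyond which $\sigma_{\xi_0}(\delta),\dots,\sigma_{\xi_n}(\delta)$ are pairwise distinct, so the corresponding targets $C_{\delta,\sigma_{\xi_i}(\delta)}$ are disjoint and clause~(2) of Proposition~\ref{prop:cohen} applies. This is what drives the inductive invariant $(*)$: for all $\xi_0 < \cdots < \xi_n$ not yet consumed, $\mathbb{Q}(H_{\sigma_{\xi_0}\restriction\alpha}) \times \cdots \times \mathbb{Q}(H_{\sigma_{\xi_n}\restriction\alpha})$ is ccc in $V^{\mathbb{P}_\alpha}$. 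You never formulate this invariant, and without it the limit step via Lemma~\ref{lem:directccc} cannot be carried out.

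Second, interleaving an arbitrary ccc poset $\mathbb{A}$ for $\MA$ can genuinely destroy $(*)$: there is no reason a ccc $\mathbb{A}$ should preserve the ccc of these side-products. The paper's mechanism is to test, at stage $\alpha$, whether $\mathbb{A} \times \mathbb{Q}(H_{\sigma_{\xi_0}\restriction\alpha}) \times \cdots \times \mathbb{Q}(H_{\sigma_{\xi_n}\restriction\alpha})$ fails to be ccc for some such tuple; if so, one forces with that product instead of $\mathbb{A}$ (thereby making $\mathbb{A}$ non-ccc in all further extensions, so it can be legitimately skipped), and in either case one sacrifices only countably many indices into a set $X_\alpha$. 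Since $\kappa$-many indices remain available at every stage, a fresh $\eta_\alpha$ can always be chosen for the next $\mathbb{Q}(H_{\sigma_{\eta_\alpha}\restriction\alpha})$. Your proposal flags the obstacle but supplies no device to overcome it.
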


\begin{proof}
    Start with the model obtained in Proposition~\ref{prop:Baumgartner}, where there is an almost disjoint sequence $\langle \sigma_\alpha : \alpha < \kappa \rangle$ in $\prod_{\xi < \kappa} \mu_\xi$, $\mu_\xi = \max(\vert\xi\vert, \aleph_0)$. This is our ground model $V$ now. Fix a bookkeeping function $B$ with domain $\kappa$. The details of $B$ are going to be discussed at the end. We are going to recursively define a ccc finite support iteration $\mathbb{P} = \langle \mathbb{P}_\alpha, \mathbb{Q}_\alpha : \alpha < \kappa \rangle$. Additionally, there will be the following objects for each $\alpha < \kappa$: 
    \begin{enumerate}
        \item $\mathbb{P}_{\alpha+1}$-names $\dot C_{\alpha, \xi}$, $\xi < \mu_\alpha$, for pairwise disjoint countable dense sets of complex numbers, such that any $ \bar c \in (\bigcup_{\xi < \mu_\alpha} C_{\alpha, \xi})^{<\omega}$ is mutually Cohen generic over $V^{\mathbb{P}_\alpha}$, 
        \item a countable set $X_\alpha \subseteq \kappa$,
        \item a $\mathbb{P}_\alpha$-name $\dot z_\alpha$ for a complex number, 
        \item a $\mathbb{P}_{\alpha+1}$-name $\dot f_\alpha$ for an entire function, such that $\Vdash_{\mathbb{P}_{\alpha +1}} \dot f_\alpha(\dot z_\delta) \in \bigcup_{\xi < \mu_\delta} \dot C_{\delta, \xi}$, for all $\delta < \alpha$. 
    \end{enumerate}

    For any $\sigma \in \prod_{\xi < \alpha} \mu_\xi$, we then let $\dot H_{\sigma}$ be a $\mathbb{P}_\alpha$-name for $$\{ (z_\delta, C_{\delta, \sigma(\delta)}) : \delta < \alpha\}.$$

We will inductively prove that for any $n\in \omega$ and $\xi_0 < \dots < \xi_n \in \kappa \setminus \bigcup_{\delta < \alpha} X_\delta$, \begin{equation}
    \Vdash_{\mathbb{P}_\alpha} \mathbb{Q}(\dot{H}_{\sigma_{\xi_0} \restriction \alpha}) \times \dots \times \mathbb{Q}(\dot{H}_{\sigma_{\xi_n} \restriction \alpha}) \text{ is ccc}. \tag{$*$}
\end{equation}

Start with $\mathbb{P}_0 = \{ \mathbbm{1} \}$. Clearly ($*$) above is satisfied when $\alpha = 0$.\footnote{ $\mathbb{Q}(\emptyset)$ has a countable dense subset, e.g. consisting of those conditions $p$, where $f_p$ is a polynomial in rational coefficients.} Suppose we have constructed $\mathbb{P}_\alpha$ and we showed that ($*$) holds. Then we first define a forcing notion $\mathbb{P}_\alpha^{+}$ extending $\mathbb{P}_\alpha$.

Suppose $B(\alpha)$ is a $\mathbb{P}_\alpha$-name $\dot{\mathbb{A}}$ for a ccc poset of size $< \kappa$. In not, we simply let $\mathbb{P}_\alpha^+ = \mathbb{P}_\alpha$. In $V^{\mathbb{P}_\alpha}$, there may be $\xi_0 < \dots < \xi_n \in \kappa \setminus \bigcup_{\delta < \alpha} X_\delta$ such that $$ \mathbb{A} \times \mathbb{Q}({H}_{\sigma_{\xi_0} \restriction \alpha}) \times \dots \times \mathbb{Q}({H}_{\sigma_{\xi_n} \restriction \alpha}) \text{ is not ccc}.$$ 

Note then, that $\mathbb{Q}({H}_{\sigma_{\xi_0} \restriction \alpha}) \times \dots \times \mathbb{Q}({H}_{\sigma_{\xi_n} \restriction \alpha})$ forces that $\mathbb{A}$ is not ccc and this remains the case in any further ccc extension, by Lemma~\ref{lem:ccctwostep}. Also, this forces that for any $\xi'_0 < \dots < \xi'_m \in \kappa \setminus \bigcup_{\delta < \alpha} X_\delta \cup \{\xi_0 < \dots < \xi_n \}$, $$\mathbb{Q}({H}_{\sigma_{\xi'_0} \restriction \alpha}) \times \dots \times \mathbb{Q}({H}_{\sigma_{\xi'_m} \restriction \alpha}) \text{ is still ccc.}$$  
If no such $\xi_0 < \dots < \xi_n$ exist, then $\mathbb{A}$ preserves that all such products are ccc, again by Lemma~\ref{lem:ccctwostep}. We let $\mathbb{P}_\alpha^+$ be $\mathbb{P}_\alpha * \dot{\mathbb{B}}$, where $\dot{\mathbb{B}}$ is a $\mathbb{P}_\alpha$-name for $\mathbb{Q}({H}_{\sigma_{\xi_0} \restriction \alpha}) \times \dots \times \mathbb{Q}({H}_{\sigma_{\xi_n} \restriction \alpha})$ or $\mathbb{A}$, depending on which of the cases occur. In $V$, using the ccc of $\mathbb{P}_\alpha$, we let $X_\alpha^-$ be a countable set that contains any such $\xi_0, \dots, \xi_n$ that we might choose in the former case. Then ($*$) still holds if we replace $\mathbb{P}_\alpha$ by $\mathbb{P}_\alpha^+$ and if $\xi_0 < \dots < \xi_n$ are in $\kappa \setminus (X_\alpha^- \cup \bigcup_{\delta < \alpha} X_\delta)$, by what we have already noted.

Next we let $\dot z_\alpha = B(\gamma)$, where $\gamma$ is least such that $B(\gamma)$ is a $\mathbb{P}_\alpha$-name for a complex number distinct from any $\dot z_\delta$, $\delta < \alpha$. Let $\mathbb{C}_{\mu_{\alpha}}$ be the forcing for adding mutually generic Cohen reals $\langle c_{\alpha, \xi, i} : \xi < \mu_\alpha, i \in \omega \rangle$. Let $\mathbb{P}_\alpha^{++} = \mathbb{P}_\alpha^+ \times \mathbb{C}_{\mu_{\alpha}}$ and $\dot C_{\alpha, \xi}$ be a $\mathbb{P}_\alpha^{++}$-name for $\{ c_{\alpha, \xi, i} : i \in \omega \}$. For any $\xi_0 < \dots < \xi_n \in \kappa \setminus (X_\alpha^- \cup \bigcup_{\delta < \alpha} X_\delta)$, we still have that $$ \Vdash_{\mathbb{P}_\alpha^{++}} \mathbb{Q}(\dot{H}_{\sigma_{\xi_0} \restriction \alpha}) \times \dots \times \mathbb{Q}(\dot{H}_{\sigma_{\xi_n} \restriction \alpha}) \text{ is ccc},$$ since Cohen forcing preserves the ccc of any poset. Let $\eta_\alpha \in \kappa \setminus (X_\alpha^- \cup \bigcup_{\delta < \alpha} X_\delta)$ be arbitrary, let $X_\alpha = X_\alpha^- \cup \{\eta_\alpha \}$, $$\mathbb{P}_{\alpha +1} =\mathbb{P}_{\alpha}^{++} * \mathbb{Q}(\dot{H}_{\sigma_{\eta_\alpha} \restriction \alpha}),$$ and $\dot f_\alpha$ be a name for the entire function added by $\mathbb{Q}({H}_{\sigma_{\eta_\alpha} \restriction \alpha})$, as described in Lemma~\ref{lem:genericfunction}.

Now, for any $\xi_0 < \dots < \xi_n \in \kappa \setminus (\bigcup_{\delta < \alpha+1} X_\delta)$, $$\Vdash_{\mathbb{P}_{\alpha+1}} \mathbb{Q}(\dot{H}_{\sigma_{\xi_0} \restriction \alpha}) \times \dots \times \mathbb{Q}(\dot{H}_{\sigma_{\xi_n} \restriction \alpha}) \text{ is ccc}$$ and by Proposition~\ref{prop:extccc}, $$\Vdash_{\mathbb{P}_{\alpha+1}} \mathbb{Q}(\dot{H}_{\sigma_{\xi_0} \restriction \alpha +1}) \times \dots \times \mathbb{Q}(\dot{H}_{\sigma_{\xi_n} \restriction \alpha +1}) \text{ is ccc}.$$

So ($*$) holds at $\alpha+1$. It remains to show that ($*$) is preserved in limits $\alpha$. So once again, let $\xi_0 < \dots < \xi_n \in \kappa \setminus (\bigcup_{\delta < \alpha} X_\delta)$. Then there is $\beta < \alpha$ so that $\sigma_{\xi_i}(\delta) \neq \sigma_{\xi_j}(\delta)$ and followingly, $C_{\delta, \sigma_{\xi_i}(\delta)} \cap C_{\delta, \sigma_{\xi_j}(\delta)} = \emptyset$, for any $i < j \leq n$ and $\delta \in [\beta, \alpha)$. Thus, by Proposition~\ref{prop:cohen}, $$ \Vdash_{\delta+1} \mathbb{Q}(\dot{H}_{\sigma_{\xi_0} \restriction \delta}) \times \dots \times \mathbb{Q}(\dot{H}_{\sigma_{\xi_n} \restriction \delta}) \lessdot_{V^{\mathbb{P}_\delta}} \mathbb{Q}(\dot{H}_{\sigma_{\xi_0} \restriction \delta +1}) \times \dots \times \mathbb{Q}(\dot{H}_{\sigma_{\xi_n} \restriction \delta+1}),$$ which, according to Lemma~\ref{lem:comsub}, is equivalent to $$\mathbb{P}_\delta * \mathbb{Q}(\dot{H}_{\sigma_{\xi_0} \restriction \delta}) \times \dots \times \mathbb{Q}(\dot{H}_{\sigma_{\xi_n} \restriction \delta}) \lessdot \mathbb{P}_{\delta +1} * \mathbb{Q}(\dot{H}_{\sigma_{\xi_0} \restriction \delta +1}) \times \dots \times \mathbb{Q}(\dot{H}_{\sigma_{\xi_n} \restriction \delta+1}).$$ By induction it is then easy to see that for any limit $\delta \in [\beta, \alpha]$, $\mathbb{P}_\delta * \mathbb{Q}(\dot{H}_{\sigma_{\xi_0} \restriction \delta}) \times \dots \times \mathbb{Q}(\dot{H}_{\sigma_{\xi_n} \restriction \delta})$ is the direct limit of the forcings $\mathbb{P}_{\delta'} * \mathbb{Q}(\dot{H}_{\sigma_{\xi_0} \restriction \delta'}) \times \dots \times \mathbb{Q}(\dot{H}_{\sigma_{\xi_n} \restriction \delta'})$, for $\delta' \in [\beta, \delta)$ and for $\delta < \alpha$, we know already by ($*$) that $$\mathbb{P}_\delta * \mathbb{Q}(\dot{H}_{\sigma_{\xi_0} \restriction \delta}) \times \dots \times \mathbb{Q}(\dot{H}_{\sigma_{\xi_n} \restriction \delta}) \text{ is ccc.\footnotemark}$$
\footnotetext{To be slightly more accurate, the direct limit is a dense subforcing of $\mathbb{P}_\delta * \mathbb{Q}(\dot{H}_{\sigma_{\xi_0} \restriction \delta}) \times \dots \times \mathbb{Q}(\dot{H}_{\sigma_{\xi_n} \restriction \delta})$.} Thus, by Lemma~\ref{lem:directccc}, $\mathbb{P}_\alpha * \mathbb{Q}(\dot{H}_{\sigma_{\xi_0} \restriction \alpha}) \times \dots \times \mathbb{Q}(\dot{H}_{\sigma_{\xi_n} \restriction \alpha})$ is itself ccc. In particular ($*$) now follows, by Lemma~\ref{lem:ccctwostep}.

This finishes the construction. The bookkeeping function $B$ is supposed to enumerate all $\mathbb{P}$-names for complex numbers, and in case $\kappa$ is regular, also all $\mathbb{P}$-names for ccc forcings on ordinals $< \kappa$ unboundedly often. This is a standard argument. When $\kappa$ is regular, it suffices to let $B$ enumerate all elements of $H(\kappa)$ unboundedly often. A standard argument then shows that $\MA$ holds after forcing with $\mathbb{P}$. When $\kappa$ is not regular, let $B$ enumerate all elements of $\mathcal{P}_\omega^\kappa(\kappa)$, where $\mathcal{P}_\omega^{0}(\kappa)= \kappa$, $\mathcal{P}_\omega^{\alpha+1}(\kappa) = \mathcal{P}_\omega^{\alpha}(\kappa) \cup [\mathcal{P}_\omega^{\alpha}(\kappa)]^{\leq \omega}$, and we take unions at limits. Since $2^{\aleph_0} = \kappa$, $\vert \mathcal{P}_\omega^\kappa(\kappa) \vert = \kappa$ as well. It is then standard to see that $\mathbb{P} \subseteq \mathcal{P}_\omega^\kappa(\kappa)$, using the ccc and choosing appropriate names for $\dot{\mathbb{Q}}_\alpha$, $\alpha < \kappa$.

Finally, after forcing with $\mathbb{P}$, we have that $\langle z_\alpha : \alpha < \kappa \rangle$ enumerates the complex numbers and for every $\delta, \alpha < \kappa$, $$ f_\alpha(z_\delta) \in \{ f_\beta(z_\delta) : \beta \leq \delta \} \cup \bigcup_{\xi < \mu_\delta} C_{\delta, \xi},$$ which has size $\leq \vert \delta \vert + \mu_\delta \cdot \aleph_0 = \mu_\delta < \kappa = 2^{\aleph_0}$. Thus $\mathcal{F} = \{f_\alpha : \alpha < \kappa \}$ is a Wetzel family.  
\end{proof}

Let us note that it is not very important that the $\sigma_\alpha$'s had finite pairwise intersections and we could easily get by with assuming only countable intersections. In that case, we would just have to split up the limit stages into countable and uncountable cofinalities. The proof for uncountable cofinality stays the same and for countable cofinality the ccc follows easily from the previous steps.

Also, some interesting modifications can be made to the forcing construction above. For example, instead of taking care of all complex numbers along the iteration, we can leave out some values. For example, we may leave out exactly $0$. The resulting family $\mathcal{F}$ is then a Wetzel family on the modified domain $\mathbb{C} \setminus \{ 0 \}$, while all values $f(0)$, for $f \in \mathcal{F}$, are pairwise distinct. To see this, note that if $z \not\in \dom H$, then the function added by $\mathbb{Q}(H)$ maps to a generic complex number at $z$. 

More generally, for any given infinite $\Omega \subseteq \mathbb{C}$, we can construct a family of entire functions that is Wetzel on $\Omega$, while attaining $2^{\aleph_0}$-many values at any point outside of $\Omega$. 
For $\kappa$ regular, we can force $\diamondsuit_\kappa$ over the model from Proposition~\ref{prop:Baumgartner} without collapsing cardinals or changing cofinalities, by adding a $\kappa$-Cohen real in the standard way. This doesn't affect the result of Proposition~\ref{prop:Baumgartner}. Then, using a standard guessing argument in the iteration of Theorem~\ref{thm:main}, it should not be hard to modify the construction in order to obtain a model where such families exist for every infinite subset $\Omega \subseteq \mathbb{C}$. Whenever $\Omega_\alpha \subseteq \langle z_\beta : \beta < \alpha \rangle$ is guessed at step $\alpha$, we may force with $\mathbb{Q}(H_{\sigma_{\eta_\alpha} \restriction \alpha} \restriction \Omega_\alpha)$ instead. Here note that if $\mathbb{Q}(H)$ is ccc then for any restriction $H' \subseteq H$, $\mathbb{Q}(H')$ is a subforcing (not necessarily complete) of $\mathbb{Q}(H)$ and thus also ccc.

\section{MA and universal sets}\label{sec:mauniv}
 
In this section, we show that under $\MA + \neg \CH$ there is no universal set. Recall that $\MA$ is saying that for any ccc partial order $\mathbb{P}$ and any family $\mathcal{D}$ of less than $2^{\aleph_0}$-many dense subsets of $\mathbb{P}$, there is a filter $G \subseteq \mathbb{P}$ such that $G \cap D \neq \emptyset$, for every $D \in \mathcal{D}$. 

We begin by introducing the ccc poset that we will use. The forcing $\mathbb{S}$ shall consist of conditions of the form $p = (w,s) = (w_p,s_p)$, where $w \in [\mathbb{C}]^{<\omega}$ and $s$ is a finite sequence of open intervals with rational endpoints in $(0,1) \subseteq \mathbb{R}$ such that 

\begin{enumerate}[label=(\alph*)]
    \item $\forall i < j < \lth{s} \forall x \in s(i) \forall y \in s(j)( y < x^{8} )$, 
    \item $\forall z_0, z_1 \in w (\vert z_0 - z_1\vert \in \bigcup_{i < \lth{s} } s(i) \cup \{ 0\})$.
\end{enumerate}

A condition $q$ extends $p$ iff $w_p \subseteq w_q$ and $s_p \subseteq s_q$.

\begin{lemma}\label{lem:distanceccc}
$\mathbb{S}$ is ccc. 
\end{lemma}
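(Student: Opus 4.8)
The plan is to show that $\mathbb{S}$ has precaliber $\aleph_1$ by a $\Delta$-system argument combined with a pigeonhole on the finitely many intervals appearing in each condition. First I would take an uncountable family $\{p_\alpha : \alpha < \omega_1\}$ with $p_\alpha = (w_\alpha, s_\alpha)$. Applying the $\Delta$-system lemma to $\{w_\alpha : \alpha < \omega_1\}$, I may assume the $w_\alpha$ form a $\Delta$-system with root $r$, and moreover (thinning further) that $\lth{s_\alpha} = k$ is constant, that the interval-sequences $s_\alpha$ all agree (since there are only countably many finite sequences of rational intervals, this is automatic after passing to an uncountable subfamily), that $|w_\alpha| = n$ is constant, and that $w_\alpha \restriction r$ is literally the same tuple for all $\alpha$. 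Also, by shrinking once more, I can arrange that for each of the finitely many ``slots'' the points of $w_\alpha \setminus r$ converge (coordinatewise) to some limit point, so that pairwise distances $|z - z'|$ for $z \in w_\alpha \setminus r$, $z' \in w_\beta \setminus r$ become arbitrarily close to the corresponding distances within a single $w_\alpha$.

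The key point is then: given two conditions $p_\alpha, p_\beta$ from this thinned family, is $q = (w_\alpha \cup w_\beta, s_\alpha)$ (recall $s_\alpha = s_\beta$) a legitimate condition? Clause (a) is automatic since the interval sequence is unchanged. For clause (b) I need every pairwise distance $|z_0 - z_1|$ with $z_0, z_1 \in w_\alpha \cup w_\beta$ to lie in $\bigcup_{i<k} s(i) \cup \{0\}$. Distances within $w_\alpha$ or within $w_\beta$ are fine by hypothesis; distances between $r$ and $r$ are fine; distances from $r$ to $w_\beta \setminus r$ equal the corresponding $r$-to-$w_\alpha\setminus r$ distances in the limit, hence (using that the $s(i)$ are \emph{open} intervals and that the convergence can be made as tight as we like) eventually land in the same open interval. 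The genuinely new distances are those between $w_\alpha \setminus r$ and $w_\beta \setminus r$: here I use that both $w_\alpha \setminus r$ and $w_\beta \setminus r$ converge to the same limit tuple, so $|z_0 - z_1|$ for $z_0 \in w_\alpha\setminus r$, $z_1 \in w_\beta \setminus r$ (in matching slots) tends to $0$, and for non-matching slots it tends to a within-$w_\alpha$ distance, again landing in an open $s(i)$. So for all but countably many pairs $\alpha < \beta$ the distances are all caught by the open intervals, hence $q$ is a condition and extends both $p_\alpha$ and $p_\beta$; this gives that $\mathbb{S}$ is $\sigma$-centered-like / has precaliber $\aleph_1$, in particular ccc.

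The main obstacle I anticipate is handling clause (b) carefully for the cross-distances and for the $0$-distances: if two points from $w_\alpha$ and $w_\beta$ happen to coincide in the limit but are not actually equal, their distance is small positive, and I need the smallest interval $s(0)$ (or whichever interval is relevant) to contain arbitrarily small positive reals — but the intervals are specified to have endpoints in $(0,1)$, so $s(0)$ need not reach down to $0$. This is exactly where condition (a), $y < x^8$ for $x \in s(i), y \in s(j)$ with $i<j$, is doing work: it forces the later intervals to be \emph{much} smaller, so the ``scales'' of distances are separated, and one must check that the limiting configuration of points is genuinely realizable — i.e. that points which are supposed to be at ``scale $s(i)$'' apart cannot accidentally drift to scale-$s(j)$ distance. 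The clean way around this is to not pass to limit points blindly but to first thin the family so that the \emph{pattern} of which pair of indices in $w_\alpha$ realizes which interval $s(i)$ is fixed, and so that each coordinate of $w_\alpha \setminus r$ converges; then the only cross-distances that could be problematic are between points converging to the \emph{same} limit, and for those one shows they must in fact lie in the same $w_\alpha$-slot pattern, so no new small distance is created. I would also double-check that distance $0$ is genuinely allowed in clause (b) (it is, explicitly: ``$\cup\,\{0\}$''), which covers the degenerate case where a point of $w_\alpha\setminus r$ and a point of $w_\beta\setminus r$ literally coincide.

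Assuming these bookkeeping details go through, the conclusion that $\mathbb{S}$ is ccc (indeed has precaliber $\aleph_1$) follows immediately, since from any uncountable antichain we extracted two compatible elements, a contradiction.
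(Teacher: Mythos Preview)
Your overall shape is right (thin to a common $s$ and common $|w_\alpha|=n$, then use an accumulation point in $\mathbb{C}^n$), and you correctly locate the obstacle: matching-slot points $z_i^\alpha \in w_\alpha\setminus r$ and $z_i^\beta \in w_\beta\setminus r$ converge to the same limit, so $|z_i^\alpha - z_i^\beta|$ is a small \emph{positive} number not lying in any $s(i)$. But your proposed fix does not close this gap. After the $\Delta$-system step the sets $w_\alpha\setminus r$ and $w_\beta\setminus r$ are disjoint, so whenever $w_\alpha\setminus r \neq \emptyset$ these new small cross-distances are genuinely created; no amount of ``fixing the pattern'' makes them land in an existing $s(i)$, since every $s(i)$ is bounded away from $0$. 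Consequently $(w_\alpha\cup w_\beta, s)$ is \emph{never} a condition in the interesting case, and the argument as written does not go through.

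The missing idea --- and this is exactly where clause (a) is used --- is that you must \emph{extend} $s$, not reuse it. Let $c$ be the left endpoint of the last (i.e.\ smallest) interval $s(|s|-1)$. By continuity of the distance map one can choose $\alpha,\beta$ so that every cross-distance $|z-z'|$ with $z\in w_\alpha$, $z'\in w_\beta$ lies either in some $s(i)$ or in $[0,c^8)$. The finitely many nonzero cross-distances in $[0,c^8)$ can then be bracketed by a single rational interval $I=(a,b)\subseteq(0,c^8)$, and clause (a) is satisfied for $s^\frown I$ precisely because $b<c^8$. Hence $(w_\alpha\cup w_\beta,\, s^\frown I)$ is a condition extending both $p_\alpha$ and $p_\beta$. (Incidentally, the $\Delta$-system is not needed at all; an accumulation-point argument in $\mathbb{C}^n$ suffices, as the paper does.)
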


\begin{proof}
    Suppose that $p_\alpha \in \mathbb{S}$, for $\alpha < \omega_1$ are pairwise incompatible. Then we may assume without loss of generality that $s_{p_\alpha} = s$ and $\vert w_{p_\alpha} \vert = n$ is the same for all $\alpha < \omega_1$. Let us write $w_{p_\alpha} = \{ z_i^\alpha : i < n\}$, for every $\alpha$ and consider the function $d \colon \mathbb{C}^n \times \mathbb{C}^n \to [0,\infty)^{n \times n}$, where $$d(\langle z_i : i < n \rangle, \langle z'_i : i < n\rangle ) = \langle \vert z_i - z'_j \vert : (i,j) \in n \times n\rangle.$$
    
    $d$ is clearly continuous. Moreover, since $\{ \langle z_i^\alpha : i < n \rangle : \alpha < \omega_1 \} \subseteq \mathbb{C}^{n}$ is uncountable, there is some $\alpha < \omega_1$ such that $\langle z_i^\alpha : i < n \rangle$ is an accumulation point of that set. Thus let $\alpha_k \neq \alpha$, $k \in \omega$, be such that $\langle z_i^{\alpha_k} : i < n \rangle \rightarrow \langle z_i^\alpha : i < n \rangle$ as $k \rightarrow \infty$. Let $c$ be the left endpoint of $s(\lth{s} - 1)$, i.e. $c$ is the the infimum of $\bigcup_{i < \lth{s}} s(i)$. Now note that $$d(\langle z^\alpha_i : i < n \rangle, \langle z^\alpha_i : i < n \rangle) \in \Big( \bigcup_{i < \lth{s}} s(i) \cup [0, c^8) \Big)^{n \times n}.$$ Thus, by continuity, there is $k$ large enough so that $$d(\langle z^{\alpha_k}_i : i < n \rangle, \langle z^\alpha_i : i < n \rangle) \in \Big( \bigcup_{i < \lth{s}} s(i) \cup [0, c^8) \Big)^{n \times n}.$$ In other words, for all $z_0, z_1 \in w_{p_\alpha} \cup w_{p_{\alpha_k}}$, $\vert z_0 - z_1 \vert \in \bigcup_{i < \lth{s}} s(i) \cup [0, c^8)$. Let $0 < b < c^8$, where $b$ is strictly bigger than the maximal distance between points in $w_{p_\alpha} \cup w_{p_{\alpha_k}}$ that lies in $(0,c^8)$. Similarly, let $0 < a < b$, where $a$ is strictly smaller than the minimal such distance. Letting $I$ be the interval $(a, b)$,  $(w_{p_\alpha} \cup w_{p_{\alpha_k}}, s^\frown I )$ is a condition extending both $p_\alpha$ and $p_{\alpha_k}$, while $\alpha \neq \alpha_k$. This is a contradiction to the assumption that $p_\alpha\perp p_{\alpha_k}$. 
\end{proof}

In the following, $Q$ is the set of rational numbers. 

\begin{lemma}\label{lem:densitydistance}
For every $z \in \mathbb{C}$ and every $n \in \omega$, the sets $D_z = \{ q \in \mathbb{S} : z \in w_q + (Q + iQ)\}$
and $E_n = \{ q \in \mathbb{S} : \lth{s_q} \geq n \}$ are dense in $\mathbb{S}$.
\end{lemma}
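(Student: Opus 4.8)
The plan is to verify density of each family separately; both are genericity/extension arguments that simply require checking conditions (a) and (b) in the definition of $\mathbb{S}$.

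For $E_n$: given $p = (w_p, s_p) \in \mathbb{S}$ with $\lth{s_p} < n$, I would extend $s_p$ by appending finitely many short intervals near the left. Concretely, let $c$ be the infimum of $\bigcup_{i < \lth{s_p}} s_p(i)$ (or $c = 1$ if $s_p = \emptyset$), which is positive since all endpoints lie in $(0,1)$ and $s_p$ is finite. I would pick rationals $0 < a_0 < b_0 < a_1 < b_1 < \dots$ all lying below $c^{1/8}$ — wait, I need $y < x^8$ for $y$ in a later interval and $x$ in an earlier one, so the new intervals come \emph{after} $s_p$ and must satisfy: everything in a new interval is below (everything in $s_p(\lth{s_p}-1)$)$^8$, and each successive new interval is below the previous one's eighth power. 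Since $c^8$ can be made as small as we like relative to $c$ only when $c<1$, and all relevant numbers are in $(0,1)$, iterating $x \mapsto x^8$ drives things to $0$, so there is plenty of room to insert $n - \lth{s_p}$ many tiny intervals each with rational endpoints, pairwise ordered by the eighth-power condition. Condition (b) is automatically preserved because we only added intervals, never points to $w_p$. Thus the extended condition lies in $E_n$ and refines $p$.

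For $D_z$: given $p$ and $z \in \mathbb{C}$, I want to add to $w_p$ a point of the form $z - (r + is)$ with $r, s \in Q$, i.e. a point in $z + (Q + iQ)$, so that $z \in w_q + (Q+iQ)$ after we put $z' := z - (r+is)$ into the condition. The constraint is (b): the distance from $z'$ to every existing point of $w_p$, and to itself ($=0$, which is allowed), must land in $\bigcup_{i} s_p(i)$. Here the key point is that $w_p \subseteq Q + iQ$ is \emph{not} guaranteed, so I should instead appeal to density: the set of conditions where $w_q \subseteq Q + iQ$ is itself dense (approximate each coordinate of each point of $w_p$ by rationals without changing which intervals the pairwise distances fall in — this uses continuity of the distance function and that each $s_p(i)$ is \emph{open}). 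Actually, cleaner: first extend $p$, if necessary, to a condition $p'$ with $z$ itself already satisfying (b) against $w_{p'}$ — but we cannot always add $z$ itself since its distances to $w_p$ may be irrational. So the honest route is: by density (openness of the intervals and continuity of distance), reduce to the case $w_p \subseteq Q + iQ$; then also by density (using $E_n$-type extensions, enlarging $\lth{s_p}$ and choosing an interval containing a prescribed rational distance) arrange that there is a rational point $z'$ near $z$ whose distances to all points of $w_{p'}$ lie in $\bigcup_i s_{p'}(i)$, and with $z - z' \in Q + iQ$; finally set $w_q = w_{p'} \cup \{z'\}$.

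The main obstacle is the $D_z$ case, specifically juggling condition (b): inserting a new point forces its pairwise distances to the old points into the prescribed union of intervals, and these distances are rigidly determined once the point is fixed. The trick — and I expect this is what the authors do — is that the point need only be \emph{rational} (we only need $z \in w_q + (Q+iQ)$, not $z \in w_q$), and the intervals are \emph{open}, so we have the freedom both to wiggle the new point within $Q+iQ$ and to append a fresh tiny interval capturing whatever new distance value shows up, exactly as in the $E_n$ argument and as in the proof of Lemma~\ref{lem:distanceccc} where an interval $(a,b)$ is inserted to absorb newly created distances. So $D_z$-density essentially reduces to a combination of the $E_n$-extension move with a rational-approximation move, and $E_n$-density is the pure "append tiny intervals" move.
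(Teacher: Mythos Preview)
Your $E_n$ argument is fine and matches the paper's (which simply calls this case ``obvious'').

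For $D_z$, however, there is a real gap. Your proposed reduction ``to the case $w_p \subseteq Q + iQ$'' is invalid: extension in $\mathbb{S}$ requires $w_p \subseteq w_q$, so once $w_p$ contains an irrational point, no extension has $w_q \subseteq Q+iQ$, and such conditions do not form a dense set. More importantly, you repeatedly place the new point $z'$ \emph{near $z$}; but $z$ can be arbitrarily far from every point of $w_p$ (distance $5$, say), and then the distances $|z' - y|$ for $y \in w_p$ cannot possibly land in $\bigcup_i s_p(i) \subseteq (0,1)$, no matter how many tiny intervals you append. The appended interval must lie below $c^8$, so it only absorbs \emph{small} new distances, and you have not arranged any of the new distances to be small.

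The missing observation is that $z + (Q+iQ)$ is dense in $\mathbb{C}$, so you may choose $z' \in z + (Q+iQ)$ \emph{near an existing point $z_0 \in w_p$} (rather than near $z$). Then for every $y \in w_p \setminus \{z_0\}$ the distance $|z' - y|$ is close to $|z_0 - y|$, which already lies in some open $s_p(i)$; and $|z' - z_0|$ is small, hence below $c^8$ and absorbed by a single freshly appended interval. This is exactly the paper's argument (and exactly the mechanism from the ccc proof you cite, but applied with the roles correctly assigned). The case $w_p = \emptyset$ is trivial: just take $q = (\{z\}, s_p)$.
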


\begin{proof}
 Let $p \in \mathbb{S}$ be arbitrary. If $w_p = \emptyset$, then clearly $q = (\{z\}, s_p)$ extends $p$ and lies in $D_z$. Otherwise, there is $z_0 \in w_p$ and a small open neighborhood $O$ of $z_0$ so that for any $z_1 \in O$, there is an extension $q$ of $p$ with $z_1 \in w_q$. This is similar to the argument in the proof of Lemma~\ref{lem:distanceccc}. $O$ clearly contains a rational translate of $z$. The case of $E_n$ is obvious.\end{proof}

\begin{lemma}\label{lem:openuncountable}
 Let $O \subseteq (0,\infty)$ be open containing arbitrarily small values. Then there is an uncountable set $X \subseteq \mathbb{R}$ such that $\{ \vert z_0 - z_1 \vert : z_0, z_1 \in X, z_0 \neq z_1 \} \subseteq O$.
\end{lemma}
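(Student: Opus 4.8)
The plan is to realise $X$ as the set of all subset-sums of a sufficiently rapidly decreasing sequence of positive reals chosen inside $O$, a standard Cantor-scheme trick. First I would recursively pick reals $a_n \in O$ together with radii $\delta_n \in (0, a_n)$ so that $(a_n - \delta_n, a_n + \delta_n) \subseteq O$: this is possible at every stage precisely because $O$ is open and meets every neighbourhood of $0$, so it contains points as small as we like, each with a genuine interval around it still inside $O$. While doing this I would additionally require, at stage $n \ge 1$, that $a_n < 2^{-n}\delta_{n-1}$ and that the sequence $(\delta_n)$ is non-increasing. These constraints force the tail estimate $\sum_{m \ge n} a_m < \sum_{m \ge n} 2^{-m}\delta_{m-1} \le \delta_{n-1}$ for every $n \ge 1$; in particular $\sum_n a_n$ converges and $\sum_{m > n} a_m < \delta_n$ for all $n$.

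Having fixed such sequences, I would set
\[
X = \Big\{\, \sum_{n \in \omega} \varepsilon(n)\, a_n : \varepsilon \in 2^{\omega} \,\Big\}.
\]
The only real computation is this: given distinct $\varepsilon, \varepsilon' \in 2^{\omega}$ with corresponding sums $x, x'$, let $n$ be least with $\varepsilon(n) \neq \varepsilon'(n)$, say $\varepsilon(n) = 1$ and $\varepsilon'(n) = 0$. Then $x - x' = a_n + r$ where $|r| \le \sum_{m > n} a_m < \delta_n < a_n$, so $x - x' \in (a_n - \delta_n, a_n + \delta_n) \subseteq O$. Since $a_n - \delta_n > 0$, this difference is strictly positive; hence distinct $\varepsilon$ yield distinct elements of $X$ (so $|X| = 2^{\aleph_0}$, certainly uncountable) and $|x - x'| = x - x' \in O$, which is exactly what the lemma asks.

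The argument is elementary and uses no complex-analytic input. The one place calling for a little care is the bookkeeping in the recursion guaranteeing $\sum_{m>n} a_m < \delta_n$ for every $n$: the radius $\delta_n$ is only selected at stage $n$, whereas the tail involves all later $a_m$. This is what the two imposed conditions (each $a_m$ a small fraction of the previously chosen $\delta_{m-1}$, and $(\delta_n)$ non-increasing) are designed to handle, so there is no circularity; everything else is routine.
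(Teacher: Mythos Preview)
Your argument is correct and is essentially the same Cantor-scheme construction as in the paper: the paper builds the scheme as a tree of nested intervals with sibling intervals at distance in $O$, while you realise it concretely as the set of subset-sums of a rapidly decreasing sequence chosen from $O$. The bookkeeping you flag (ensuring $\sum_{m>n} a_m < \delta_n$) is handled cleanly by your two recursive constraints, so there is no gap.
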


\begin{proof}
By recursion construct a Cantor scheme, i.e. a map $\varphi$ from $2^{<\omega}$ to non-empty open intervals of $\mathbb{R}$, such that for every $t \subseteq t'$, $\overline{\varphi(t')} \subseteq \varphi(t)$, $\operatorname{diam}(\varphi(t)) \leq \frac{1}{\lth{t} + 1}$ and $\varphi(t ^\frown 0) \cap \varphi(t^\frown 1) = \emptyset$. Start with $\varphi(\emptyset) = (0,1)$ and given $\varphi(t)$, find $\varphi(t^\frown 0)$ and $\varphi(t^\frown 1)$ such that for every $x \in \varphi(t^\frown 0)$, $y \in \varphi(t^\frown 1)$, $\vert x - y \vert \in O$. This is possible since $O$ is open and contains arbitrarily small numbers $> 0$. Clearly, $X = \bigcap_{n \in \omega} \bigcup_{t \in 2^n} \varphi(t)$ works. 
\end{proof}

\begin{lemma}[essentially {\cite[Prop. 9.4]{Abraham1985}}]\label{lem:noentiredistance}
 Let $X, Y \subseteq \mathbb{C}$, $X$ uncountable, and assume that for every $x \in \{\vert z_0 - z_1 \vert : z_0, z_1 \in X, z_0 \neq z_1 \}, y \in \{\vert z_0 - z_1 \vert : z_0, z_1 \in Y, z_0 \neq z_1 \}$, $$\min(x,y) < \max(x,y)^2.$$ Then there is no non-constant entire function $f$ such that $f'' X \subseteq Y$. 
\end{lemma}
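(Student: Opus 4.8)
The plan is to argue by contradiction: suppose $f \in \Hol$ is non-constant with $f''X \subseteq Y$. Since $X$ is uncountable and $\mathbb{C}$ is separable, $X$ has an accumulation point, so we may pick a small closed disc $D$ containing uncountably many points of $X$ on which, by shrinking, $f$ is injective (using Proposition~\ref{prop:constantaccum}, or rather that $f$ non-constant has isolated critical points, we can arrange $f'$ to be bounded away from $0$ on $D$, so $f$ is bi-Lipschitz there). Concretely, first I would choose $z^* \in \mathbb{C}$ and $r > 0$ so that $B_r(z^*)$ contains uncountably many points of $X$ and so that on $\overline{B_r(z^*)}$ we have $0 < c \leq |f'(z)| \leq C$ for some constants; this is possible because the zero set of $f'$ is discrete, so around a non-critical accumulation point of $X$ such a disc exists (if every point of $X$ were a critical point, $f'$ would vanish on a set with an accumulation point, forcing $f' \equiv 0$ and $f$ constant).

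The key computation is then a comparison of scales. Pick two points $z_0, z_1 \in X \cap B_r(z^*)$ that are very close together — by uncountability we can make $|z_0 - z_1|$ arbitrarily small. Let $x = |z_0 - z_1|$ and $y = |f(z_0) - f(z_1)|$; note $x \in \{|w_0 - w_1| : w_0,w_1 \in X, w_0 \neq w_1\}$ and, since $f$ is injective on the disc, $y > 0$ and $y \in \{|w_0 - w_1| : w_0,w_1 \in Y, w_0 \neq w_1\}$. By the bi-Lipschitz bounds, $c\,x \leq y \leq C\,x$, so $x$ and $y$ are comparable up to fixed multiplicative constants. Now the hypothesis says $\min(x,y) < \max(x,y)^2$; but as $x \to 0$ we also have $y \to 0$, and $\min(x,y) \geq c' \max(x,y)$ for a fixed constant $c' = \min(c, 1/C) > 0$, so the inequality $\min(x,y) < \max(x,y)^2$ forces $c' \max(x,y) < \max(x,y)^2$, i.e. $\max(x,y) > c'$. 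This contradicts the fact that we can make both $x$ and $y$ as small as we like by choosing $z_0, z_1$ close enough. Hence no such $f$ exists.

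The main obstacle is the first step: justifying that one can find a disc around an accumulation point of $X$ on which $f$ is uniformly bi-Lipschitz. The upper Lipschitz bound is automatic on any compact disc since $f'$ is continuous; the lower bound requires the accumulation point of $X$ to be chosen away from the (discrete) zero set of $f'$, and then shrinking so that $|f'|$ stays bounded below — this uses continuity of $f'$ and compactness, exactly the kind of "special geometry of holomorphic functions" the paper flags in the footnote to Lemma~\ref{lem:nonccc}. One must also be slightly careful that $X \cap B_r(z^*)$ is genuinely uncountable: since $X$ is uncountable and $\mathbb{C}$ is second countable, all but countably many points of $X$ are condensation points of $X$, and we may take $z^*$ to be such a condensation point that is not a zero of $f'$ (discarding at most a countable set plus a discrete set). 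After that, the scale-comparison argument is routine arithmetic. (The reference to \cite[Prop. 9.4]{Abraham1985} presumably carries out essentially this argument, and I would cite it for the bi-Lipschitz estimate if a cleaner black-box form is available.)
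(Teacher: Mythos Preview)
Your approach is essentially the paper's: both pick an accumulation point $z^* \in X$ of $X$ with $f'(z^*) \neq 0$ and exploit that near $z^*$ the ratio $|f(z_0)-f(z_1)|/|z_0-z_1|$ is bounded above and away from $0$, whereas the hypothesis forces it to be either very large or very small. The paper packages this more directly: it fixes $x=z^*$, takes $x_n\to x$ in $X$, passes to a subsequence on which one of the two alternatives $|x_n-x|<|f(x_n)-f(x)|^2$ or $|f(x_n)-f(x)|<|x_n-x|^2$ persists, and observes that the first forces $|f(x_n)-f(x)|/|x_n-x|\to\infty$ while the second forces it to $0$, contradicting $|f'(x)|\in(0,\infty)$.

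One genuine slip in your write-up: the lower Lipschitz bound $|f(z_0)-f(z_1)|\geq c\,|z_0-z_1|$ does \emph{not} follow merely from $|f'|\geq c$ on a disc for holomorphic $f$ (think of $f(z)=e^z$ on a region where $|f'|=|e^z|$ is bounded below but $f$ is not injective). What you actually need is to shrink the disc so that $|f'(z)-f'(z^*)|<\tfrac{1}{2}|f'(z^*)|$ there; then $f(z_0)-f(z_1)=(z_0-z_1)\int_0^1 f'(z_1+t(z_0-z_1))\,dt$ gives the lower bound with constant $\tfrac{1}{2}|f'(z^*)|$. The paper's difference-quotient formulation sidesteps this issue entirely, which is why it reads a bit cleaner.
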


\begin{proof}
Suppose there is such $f$. Since $f$ is non-constant we can find an accumulation point $x \in X$ of $X$ such that $f'(x) \neq 0$. Let $x_n \rightarrow x$, where $x_n \in X$ for every $n \in \omega$. Then there is $\langle n_k : k \in \omega \rangle$ such that for all $k$, $$\vert x_{n_k} - x \vert < \vert f(x_{n_k}) - f(x) \vert^2$$ or for all $k$, $$\vert f(x_{n_k}) - f(x) \vert < \vert x_{n_k} - x \vert^2.$$

The former is impossible since then $$\frac{\vert f(x_{n_k}) - f(x) \vert }{\vert x_{n_k} - x \vert} \geq \frac{\vert f(x_{n_k}) - f(x) \vert }{\vert f(x_{n_k}) - f(x) \vert^2} = \frac{1}{\vert f(x_{n_k}) - f(x) \vert}$$ which does not converge, and from the latter we follow that $$\frac{\vert f(x_{n_k}) - f(x) \vert }{\vert x_{n_k} - x \vert} \leq \frac{\vert x_{n_k} - x \vert^2 }{\vert x_{n_k} - x \vert} = \vert x_{n_k} - x \vert$$ which converges to $0$. This contradicts that $f'(x) \neq 0$. 
\end{proof}

\begin{thm}\label{thm:MAuniv}
$\MA + \neg \CH$ implies that there is no universal set.
\end{thm}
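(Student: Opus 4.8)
The plan is to suppose, toward a contradiction, that $Y$ is a universal set and to use $\MA$ to manufacture a set witnessing that it is \emph{not} universal, the holomorphic part of the obstruction being supplied by Lemma~\ref{lem:noentiredistance}. By Proposition~\ref{prop:univsuccessor} we may fix $\lambda$ with $\aleph_1 \le \lambda = \lth{Y}$ and $\lambda^+ = 2^{\aleph_0}$, so that a witness $X$ need only have size $\le \lambda < 2^{\aleph_0}$. First I would record a preliminary observation that is also the source of the difficulty: a universal $Y$ is automatically dense in some nonempty open set. Indeed, feeding universality a countable set $X_0$ dense in the unit disc $\mathbb{D}$ yields a non-constant $f \in \Hol$ with $f[X_0]\subseteq Y$; since $f$ is an open mapping and $f[X_0]$ is dense in the open set $f[\mathbb{D}]$, the set $Y$ is dense in $f[\mathbb{D}]$, and hence the distance set $\Delta(Y):=\{\,\lth{z_0-z_1} : z_0\neq z_1\in Y\,\}$ is dense in some interval $(0,\delta)$. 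In particular one cannot simply ``dodge'' $\Delta(Y)$, which is what makes the theorem delicate.

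Next, apply $\MA$ to the ccc poset $\mathbb{S}$ of Lemma~\ref{lem:distanceccc}. Meeting the dense sets $E_n$ ($n\in\omega$) of Lemma~\ref{lem:densitydistance} makes the union $\bar s$ of the second coordinates along the generic filter an $\omega$-sequence of rational intervals, and clause~(a) of $\mathbb{S}$ forces $O:=\bigcup_i \bar s(i)$ to be a pairwise disjoint union of intervals collapsing toward $0$ so fast that $\sup\bar s(i{+}1) < (\inf\bar s(i))^8$; in particular any two reals lying in distinct members of $\bar s$ already satisfy $\min < \max^2$, and the ``bad zones'' $\bigcup_{x\in\bar s(i)}[x^2,\sqrt x]$ are, as $i$ varies, themselves pairwise disjoint intervals racing to $0$. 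Simultaneously meeting $D_z$ for $z$ ranging over a set of size $\aleph_1$ with pairwise distinct $(\QQ+i\QQ)$-cosets makes $W:=\bigcup_q w_q$ uncountable, and clause~(b) of $\mathbb{S}$ forces every pairwise distance of $W$ into $O$; passing to an uncountable $X\subseteq W$ of size $\aleph_1$ we obtain $X$ with $\lth{X}<2^{\aleph_0}$ and $\Delta(X)\subseteq O$. (Equivalently, once $O$ has been produced one may obtain such an $X$ directly from Lemma~\ref{lem:openuncountable} and then thin it to size $\aleph_1$.)

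The crux of the argument -- and the step I expect to be the main obstacle -- is to arrange, within the \emph{same} application of $\MA$, that $X$ and $Y$ meet the hypothesis of Lemma~\ref{lem:noentiredistance}; granting that, Lemma~\ref{lem:noentiredistance} rules out a non-constant $f\in\Hol$ with $f[X]\subseteq Y$, contradicting universality (note that $\lambda<2^{\aleph_0}$-many dense sets are permitted by $\MA$, so the filter exists). Since $\Delta(Y)$ is dense near $0$, one cannot make $O$ disjoint from every $[y^2,\sqrt y]$, $y\in\Delta(Y)$, so the argument has to use that along a prospective non-constant $f$ with $f[X]\subseteq Y$ the only relevant $y\in\Delta(Y)$ are those of the form $\lth{f(x_0)-f(x_1)}$ with $\lth{x_0-x_1}\in O$, and that -- exactly as in the proof of Lemma~\ref{lem:noentiredistance} -- these are pinned, up to a bounded multiplicative factor, to the corresponding $\lth{x_0-x_1}$ by the value of $f'$ at an accumulation point of $X$. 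The remaining work is then to add $\lambda$-many further dense subsets of $\mathbb{S}$, indexed by a cofinal piece of $\Delta(Y)$, which force the generic intervals of $O$ to step over the associated danger intervals as they descend toward $0$; to verify that these sets really are dense in $\mathbb{S}$ (here the slack between the exponent $8$ in clause~(a) of $\mathbb{S}$ and the exponent $2$ in Lemma~\ref{lem:noentiredistance} is what should make the ``jump'' possible, each $[y^2,\sqrt y]$ meeting at most one member of $\bar s$); and to check that this is enough to invoke Lemma~\ref{lem:noentiredistance}. I expect essentially all the technical effort to live in this last paragraph.
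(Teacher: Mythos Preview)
Your setup is right up to the point where you decide what to do with the dense sets $D_z$, and there you have the roles of $X$ and $Y$ swapped relative to what actually works. You use $D_z$ only to make the generic set $W=\bigcup_q w_q$ uncountable and then take $X\subseteq W$; this leaves you trying to control $\Delta(Y)$ against $O=\bigcup_i \bar s(i)$, which---as you yourself observe---is hopeless because $\Delta(Y)$ can be dense near $0$. Your proposed fix (extra dense sets indexed by a cofinal piece of $\Delta(Y)$, forcing the generic intervals to ``step over'' danger zones) cannot succeed for exactly this reason: near $0$ there is no room to step over, so these sets will not be dense in $\mathbb{S}$.

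The missing idea is to use the $D_z$'s the other way around. Meet $D_z$ for \emph{every} $z\in Y$ (this is $|Y|<2^{\aleph_0}$ dense sets, fine under $\MA$). By the definition of $D_z$, each $z\in Y$ is a rational translate of some point of the generic set $Z=\bigcup_q w_q$, so $Y\subseteq Z+(Q+iQ)$; and clause~(b) of $\mathbb{S}$ gives $\Delta(Z)\subseteq U:=\bigcup_n s(n)$. Now build $X$ not inside $Z$ but in the \emph{gaps}: with $a_n$ the left endpoint of $s(n)$, set $O:=\bigcup_n(a_n^4,a_n^2)$ and use Lemma~\ref{lem:openuncountable} to get an uncountable $X$ with $\Delta(X)\subseteq O$. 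The exponent bookkeeping (with the slack $8>2\cdot 2$ you noticed) yields $\min(x,y)<\max(x,y)^2$ for every $x\in U$, $y\in O$. Finally, if some non-constant entire $f$ sent $X$ into $Y\subseteq Z+(Q+iQ)$, then by pigeonhole an uncountable $X'\subseteq X$ lands in a single translate $Z+(r_0+ir_1)$, whose distance set equals $\Delta(Z)\subseteq U$; now Lemma~\ref{lem:noentiredistance} applies directly to $X'$ and that translate. So the ``crux'' you were worried about dissolves once you use $\mathbb{S}$ to trap $Y$ rather than to build $X$.
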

 
\begin{proof}
 Let $Y \subseteq \mathbb{C}$, $\vert Y \vert < 2^{\aleph_0}$. Using $\MA$, find a filter $G \subseteq \mathbb{S}$ intersecting all sets $D_z$ for $z \in Y$ and $E_n$ for $n \in \omega$ from Lemma~\ref{lem:densitydistance}. Let $Z = \bigcup_{p \in G} w_p$, $s = \bigcup_{p \in P} s_p$ and $U = \bigcup_{n \in \omega} s(n)$. Then $Y \subseteq Z + (Q + iQ)$ and $\{\vert z_0 - z_1 \vert : z_0, z_1 \in Z, z_0 \neq z_1 \} \subseteq U$. Let $a_n$ be the left-endpoint of $s(n)$, for every $n \in \omega$ and consider $O = \bigcup_{n \in \omega} (a_n^4, a_n^2)$. Then note that for every $x \in U$, $y \in O$, $$\min(x,y) < \max(x,y)^2.$$
 
 Finally apply Lemma~\ref{lem:openuncountable} to find a set $X \subseteq \mathbb{C}$ of size $\aleph_1$ such that $\{\vert z_0 - z_1 \vert : z_0, z_1 \in X, z_0 \neq z_1 \}\subseteq O$. We claim that there is no entire $f$ such that $f''X \subseteq Y$. Otherwise, as $Y \subseteq Z + (Q + iQ)$, there is an uncountable $X' \subseteq X$ and there are rationals $r_0, r_1$, such that $$f'' X \subseteq Z + (r_0 + ir_1).$$ Clearly, $\{\vert z_0 - z_1 \vert : z_0, z_1 \in Z + (r_0 + ir_1), z_0 \neq z_1 \} = \{\vert z_0 - z_1 \vert : z_0, z_1 \in Z, z_0 \neq z_1 \} \subseteq U$. This contradicts Lemma~\ref{lem:noentiredistance}
\end{proof} 

\begin{cor}
    The existence of a Wetzel family does not imply the existence of a universal set.
\end{cor}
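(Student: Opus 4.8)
The plan is to combine two results already established in the excerpt. From Theorem~\ref{thm:main} applied with $\kappa = \aleph_2$ (which has uncountable cofinality), we obtain a cardinal and cofinality preserving forcing extension of a model of $\GCH$ in which $2^{\aleph_0} = \aleph_2$, there is a Wetzel family, and --- since $\aleph_2$ is regular --- $\MA$ holds. So in this model $\MA + \neg\CH$ holds, and hence by Theorem~\ref{thm:MAuniv} there is no universal set. On the other hand, the Wetzel family exists by construction. Thus ``there is a Wetzel family'' does not imply ``there is a universal set'', since the latter fails while the former holds in this single model.

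The key steps, in order, are: (1) invoke Theorem~\ref{thm:main} with $\kappa = \aleph_2$ to produce a model with a Wetzel family in which $\MA$ holds and $2^{\aleph_0} = \aleph_2 > \aleph_1$; (2) note that $\MA$ together with $2^{\aleph_0} > \aleph_1$ is exactly $\MA + \neg\CH$; (3) apply Theorem~\ref{thm:MAuniv} in that model to conclude there is no universal set; (4) observe that the Wetzel family witnessing clause~(2) of Theorem~\ref{thm:main} still exists, so the implication fails. No further calculation is required.

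There is essentially no obstacle here --- the corollary is a direct juxtaposition of Theorem~\ref{thm:main} and Theorem~\ref{thm:MAuniv}. The only point worth stating explicitly is that one needs a \emph{regular} value of the continuum (so that clause~(3) of Theorem~\ref{thm:main} delivers $\MA$), and $\aleph_2$ is the natural choice; any regular $\kappa > \aleph_1$ would do equally well. This also shows that the converse of Proposition~\ref{prop:univwetzel} cannot be proved in $\ZFC$.

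\begin{proof}
    Apply Theorem~\ref{thm:main} with $\kappa = \aleph_2$, which has uncountable cofinality. This yields a forcing extension in which $2^{\aleph_0} = \aleph_2$, there is a Wetzel family, and, since $\aleph_2$ is regular, $\MA$ holds. In particular $\MA + \neg \CH$ holds in this model, so by Theorem~\ref{thm:MAuniv} there is no universal set. Hence the existence of a Wetzel family does not imply the existence of a universal set.
\end{proof}
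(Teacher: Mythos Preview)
Your proof is correct and follows exactly the same approach as the paper: apply Theorem~\ref{thm:main} with $\kappa = \aleph_2$ to obtain a model with a Wetzel family and $\MA + \neg\CH$, then invoke Theorem~\ref{thm:MAuniv} to rule out universal sets. The paper additionally remarks that, granting a weakly inaccessible cardinal, one could instead combine Theorem~\ref{thm:main} with Proposition~\ref{prop:univsuccessor}, but this is a side observation rather than the main argument.
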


\begin{proof}
    Taking $\kappa = \aleph_2$, this follows from Theorem~\ref{thm:MAuniv} and Theorem~\ref{thm:main}. Note that if we are allowed to assume the existence of a weakly inaccessible cardinal, this already follows from the main result in combination with Proposition~\ref{prop:univsuccessor}.
\end{proof}

\section{\texorpdfstring{A universal set with $2^{\aleph_0} = \aleph_2$}{A universal set with continuum aleph\_2 }}\label{sec:properuniv}

The construction in the following section will use a countable support iteration of proper forcing notions. For more information we refer the reader to \cite[Chapter 31]{Jech1997}. 

\begin{thm}(CH)\label{thm:properuniv}
    There is a proper forcing extension of $V$ preserving all cardinals and cofinalities in which $\mathbb{C}^V$ is universal and $2^{\aleph_0} = \aleph_2$. In particular, the existence of a universal set is consistent with $2^{\aleph_0} = \aleph_2$.
\end{thm}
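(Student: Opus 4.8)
The plan is to build the desired model by a countable support iteration $\langle \mathbb{P}_\alpha, \dot{\mathbb{Q}}_\alpha : \alpha < \omega_2 \rangle$ of proper forcings over a model of $\CH$, where each iterand $\dot{\mathbb{Q}}_\alpha$ is a proper variant of the forcing $\mathbb{Q}(H)$ from Section~\ref{sec:wetzelcontinuum} designed to shoot a single non-constant entire function mapping a prescribed set $\dot X_\alpha$ of size $\aleph_1$ into $\mathbb{C}^V$. A bookkeeping function guarantees that every subset of $\mathbb{C}$ of size $\aleph_1$ appearing in an intermediate extension is eventually handled, so that in $V[G]$ the ground model complex plane $\mathbb{C}^V$ (which still has size $\aleph_1$ since no reals are collapsed and the iteration has length $\omega_2$) becomes universal. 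Properness of the iteration combined with an $\aleph_2$-cc argument (using $\CH$ in the ground model and the standard fact that countable support iterations of proper forcings of size $\leq \aleph_1$ over a model of $\CH$ are $\aleph_2$-cc) yields preservation of all cardinals and cofinalities, and since $\mathbb{C}^V$ is then universal we get $2^{\aleph_0} = (\lvert \mathbb{C}^V \rvert)^+ = \aleph_2$ by Proposition~\ref{prop:univsuccessor}; alternatively the length of the iteration directly forces $2^{\aleph_0} = \aleph_2$.

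First I would isolate the single-step forcing: given $X \subseteq \mathbb{C}$ with $\lvert X \rvert = \aleph_1$ in the current model, the naive poset $\mathbb{Q}(H)$ with $H(z) = \mathbb{C}^V$ (dense, so Lemma~\ref{lem:genericfunction} applies) is \emph{not} ccc by Lemma~\ref{lem:nonccc}, so instead I would use a forcing with conditions $(p, N)$ where $p \in \mathbb{Q}_0(F)$ for appropriate finite approximations and $N$ is a finite $\in$-chain of countable elementary submodels (of some large $H(\theta)$) as side conditions, in the spirit of the $\mathbb{Q}_0(F)$ framework already set up in the excerpt and of Burke's and Shelah's model-theoretic forcing techniques. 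The side conditions are what make the forcing proper (indeed $\omega_1$-proper / $\alpha$-proper for a total-properness argument through the iteration): a condition $(p, N)$ with $M \in N$ is $(M, \mathbb{Q})$-generic, and the key density lemma — that any condition can be extended to put a new point of $X$ into the domain of $f_p$ while respecting all models in $N$ — is exactly the analogue of Claim~\ref{clm:claim} and the argument in Lemma~\ref{lem:cohen}, now carried out relative to the side-condition models rather than to a single Cohen real. I would cite the core Lemma~\ref{lem:cohen}/Lemma~\ref{lem:modelapprox} machinery as the engine, with the elementary submodels playing the role that the generic Cohen reals played in the ccc construction.

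The key steps in order: (1) define the proper forcing $\mathbb{Q}_X$ with pairs-of-models (really finite chains of models) as side conditions, and prove it adds a non-constant entire $f$ with $f''X \subseteq \mathbb{C}^V$ via a genericity/density argument reusing Lemma~\ref{lem:modelapprox} and the interpolation estimates of Lemma~\ref{lem:interbound}; (2) prove $\mathbb{Q}_X$ is proper — the heart of the matter — by showing that for a countable $M \prec H(\theta)$ containing the relevant parameters, any $p \in \mathbb{Q}_X \cap M$ can be extended to an $(M,\mathbb{Q}_X)$-generic condition obtained by adjoining $M$ (or a suitable isomorph) to the side condition; (3) verify that $\mathbb{Q}_X$ does not collapse $\omega_1$ and does not add reals that would destroy the universality target already achieved at earlier stages — in fact one wants the iteration to add no new reals at all beyond those needed, or at least to keep $\lvert \mathbb{C}^V \rvert = \aleph_1$, which follows from properness plus an $\aleph_2$-cc of the full iteration; (4) set up the countable support iteration of length $\omega_2$ with bookkeeping over all $\mathbb{P}_\alpha$-names for size-$\aleph_1$ subsets of $\mathbb{C}$, using the preservation theorem for properness under countable support iteration (\cite[Chapter 31]{Jech1997}) together with an $\alpha$-properness or total-properness strengthening at each step to push properness through all stages; (5) run the $\aleph_2$-cc bookkeeping to conclude $2^{\aleph_0} = \aleph_2$, all cardinals and cofinalities preserved, and $\mathbb{C}^V$ universal in the final model.

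The main obstacle I expect is step (2) combined with its iterability: getting a properness (or strong properness/$\alpha$-properness) argument for the pairs-of-models forcing that interacts correctly with the complex-analytic constraints. The delicate point is that extending a condition to meet a dense set inside a side-condition model $M$ requires simultaneously (a) finding an entire function refinement that agrees with $f_p$ on the current finite domain, stays within the $\varepsilon$-ball in the appropriate seminorm, and newly interpolates a value of $\mathbb{C}^V \cap M$ at a point of $X \cap M$, and (b) doing this while all \emph{other} models in the side condition $N$ see the extension as coming from their own initial segment of the forcing — essentially an amalgamation of the analytic density argument of Lemma~\ref{lem:cohen} across a finite tower of models. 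One must be careful that the seminorm estimates (which are global on balls $B_m(0)$) do not conflict across models that disagree about which finite set of points is currently "active"; this is precisely why Lemma~\ref{lem:modelapprox} was stated for a general transitive $M$ and why the $\mathbb{Q}_0(F)$ formulation (rather than $\mathbb{Q}(H)$) is used. Ensuring the iteration preserves $\aleph_1$ and that $\mathbb{C}^V$ does not grow — so that "universal for size-$\aleph_1$ sets" remains the right target throughout — is the remaining bookkeeping subtlety, handled by the standard CH-plus-proper-iteration framework.
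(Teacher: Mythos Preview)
Your overall architecture is close to the paper's --- countable-support iteration of a proper forcing built from $\mathbb{Q}$ with finite $\in$-chains of pairs of models as side conditions --- and you correctly identify Lemma~\ref{lem:cohen}/\ref{lem:modelapprox} as the analytic engine behind the master-condition argument. There is, however, a genuine gap: you never address why the target $Y = \mathbb{C}^V$ remains \emph{usable} at every intermediate stage. The density argument for ``put a new point $z$ into $a_p$'' needs, at each stage, a value $c \in Y$ that is both arbitrarily close to $f_p(z)$ \emph{and} Cohen generic over the relevant side-condition model $N_i$. This requires $Y$ to be everywhere non-meager in the current intermediate extension, not merely to have size $\aleph_1$. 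The paper secures this with a separate lemma (Lemma~\ref{lem:cohenpres}): the single-step forcing almost preserves $\sqsubseteq^{\mathrm{Cohen}}$ in the sense of \cite[Section~6.3.C]{Bartoszynski}, and then \cite[Lemma~6.3.17, 6.3.20]{Bartoszynski} yields preservation of non-meager sets across the countable-support iteration. Without this, $\mathbb{C}^V$ could become meager at some stage and your iterand would lose the required dense sets. Your step~(3) misidentifies the danger: the worry is not that previously added functions stop working, but that future iterands no longer have enough generic values in $Y$.

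Two smaller points. First, the paper does not bookkeep over $\aleph_1$-sized $X_\alpha$ at all: at every stage it forces with the \emph{same} poset $\mathbb{P}(Y)$, which adds an entire function mapping the entire current $\mathbb{C}$ into $Y$; any $\aleph_1$-sized $X$ in the final model appears by some stage $\alpha$ and is handled by the function added at stage $\alpha$. Your bookkeeping variant could also be made to work once the non-meager preservation is in place, but it is an unnecessary complication. Second, no $\alpha$-properness or total-properness is needed for iterability --- ordinary properness is preserved under countable-support iteration, full stop. The extra preservation you actually need (non-meagerness of $Y$) is orthogonal to this and comes from the Cohen-preservation property above, not from any strengthened form of properness.
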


We will construct a forcing notion that uses models as side-conditions. For now let us fix an arbitrary set $Y$ of complex numbers. We then say that a pair $(M,N)$ is a node, if $(M, \in, Y \cap M),(N, \in, Y \cap N)$ are countable elementary submodels of $(H(\omega_1), \in, Y)$ and $(M,\in,Y\cap M) \in N$. In particular, $M$ and $N$ are transitive. A side condition is a finite set $s = \{ (M_i, N_i) : i \leq n \} $ of nodes, where $(M_i, N_i) \in M_{i+1}$ for every $i < n$.

Recall the poset $\mathbb{Q}$ from Definition~\ref{def:Q}. The forcing $\mathbb{P}(Y)$ then consists of pairs $(w,s)$ where $s = \{ (M_i, N_i) : i \leq n \}$ is a side condition, $w = (a,f,\varepsilon, m) \in M_n \cap \mathbb{Q}$, $a \subseteq M_{n-1}$ and for every $i < n$, $\langle f(z) : z \in a \cap (M_i \setminus \bigcup_{j < i} M_j) \rangle \subseteq M_{i+1} \cap Y$ is mutually Cohen generic over $N_i$. In other words, the elements of $a$ that appear in a model $M_i$, but not before, are mapped to mutually generic-over-$N_i$ complex numbers that lie in $Y \cap M_{i+1}$. A condition $(v,t)$ extends $(w,s)$ if $v$ extends $w$ in $\mathbb{Q}$ and $s \subseteq t$. 

\begin{lemma}\label{lem:proper}
    $\mathbb{P}(Y)$ is proper. 
\end{lemma}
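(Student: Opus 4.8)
The plan is to run the standard side-condition properness argument: given a countable $M \prec H(\theta)$ (for $\theta$ large) with $\mathbb{P}(Y), Y, \dots \in M$, and given $p = (w,s) \in \mathbb{P}(Y) \cap M$, I would first enlarge $p$ to a condition $p^+ = (w, s \cup \{(M\cap H(\omega_1), N)\})$ where $N \prec H(\omega_1)$ is chosen so that $M \cap H(\omega_1) \in N \in M$ — such $N$ exists by elementarity of $M$, since $M\cap H(\omega_1)$ is a countable elementary submodel of $(H(\omega_1),\in,Y)$ that belongs to $M$ (note $M\cap H(\omega_1)$ is itself in $M$ because $M$ is countable and models enough of $\mathsf{ZF}^-$). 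Writing $M_0 := M \cap H(\omega_1)$, the claim is that $p^+$ is $(M,\mathbb{P}(Y))$-generic. So fix a dense open $D \in M$ and a condition $q \leq p^+$; I must find $q' \in D \cap M$ compatible with $q$.

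The key step is an amalgamation/thinning-out argument analogous to Lemma~\ref{lem:cohen} and Proposition~\ref{prop:cohen}. Let $q = (v,t)$ with $v = (a,f,\varepsilon,m)$, and let $(M_0, N) \in t$ be the node added (or a larger-indexed copy of it). Split $a$ into $a \cap M_0$ and $a \setminus M_0$; by the definition of $\mathbb{P}(Y)$, the values $f$ takes on $a \setminus M_0$ (more precisely, on $a \cap (M' \setminus \bigcup_{\text{below}})$ for the nodes $M'$ above $M_0$ in $t$, together with the "new" points above $M_0$) are mutually Cohen generic over $N$, hence over $M_0$ as well. I would first replace $v$ by its restriction $v_0 = (a \cap M_0, f, \varepsilon/2, m)$ and the side condition by $t \cap M_0$, obtaining a condition $q_0 = (v_0, t\cap M_0) \in M_0 \subseteq M$ (here one uses that $t \cap M_0 \in M_0$ since everything indexed below $(M_0,N)$ lies in $M_0$). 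Now in $M_0$ — equivalently in $M$ — use density of $D$: find $d = (u, r) \leq q_0$ with $d \in D \cap M_0$. The subtle point is that $d$ need not be compatible with $q$ directly, because extending $q_0$ may have added new points to the function's support and changed it on $B_m(0)$. This is where one invokes the Cohen-genericity of the values of $f$ on $a\setminus M_0$ over $N$ (and the Lemma~\ref{lem:modelapprox}/Lemma~\ref{lem:cohen} machinery): the set of "good" extensions of $q_0$ in $D$ — those that can be amalgamated with a condition living in $M_0$ whose function agrees suitably with $f$ on $a \cap M_0$ — is (the restriction to a basic open set of) a dense open subset of $\mathbb{C}^{|a\setminus M_0|}$ coded in $N$, and the genericity of $\langle f(z) : z \in a \setminus M_0\rangle$ over $N$ forces the actual values to fall into it. Unwinding this exactly as in the proof of Lemma~\ref{lem:cohen}, one produces $q' \in D \cap M$ and then builds a common extension $\bar q \leq q, q'$: its side condition is $t \cup s_{q'}$ (checking these interleave correctly into a side condition is routine, using $(M_0,N) \in M'$ for all nodes $M'$ of $s_{q'}$ above it, which holds since $s_{q'} \in M_0$), and its function is obtained by the interpolation trick of the previous section, correcting $f_{q'}$ by a small entire function vanishing on $a_{q'}$ so that it matches $f$ on $a \cap M_0$ while keeping the semi-norm perturbations within the $\varepsilon$-budgets.

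The main obstacle is precisely the amalgamation step: one has to be careful that when one extends $q_0$ inside $M$ to meet $D$, the resulting condition's function can still be reconciled with $f$ on the points of $a$ outside $M_0$, and that the side conditions merge into a legitimate side condition while preserving the mutual-Cohen-genericity clauses at every node. Both issues are handled exactly as in Lemma~\ref{lem:cohen} and Proposition~\ref{prop:cohen} — the genericity clause in the definition of $\mathbb{P}(Y)$ is tailored so that the relevant dense open sets of finite tuples of complex numbers, coded in the side-condition models, are hit by the function values. Everything else (that $\mathbb{P}(Y)$ is a partial order, that $p^+$ is genuinely a condition extending $p$, that side conditions can always be extended by one more node above, which uses $H(\omega_1) \prec$-reflection) is straightforward. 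One should also remark that $\mathbb{P}(Y)$ has size continuum and, under $\mathsf{CH}$, is $\aleph_2$-cc, which together with properness gives cardinal and cofinality preservation in the iteration — but that belongs to the next result, not to this lemma.
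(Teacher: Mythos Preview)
Your overall shape is right, but there is a genuine error in how you choose the node $(M_0,N)$. You assert that $M_0 := M \cap H(\omega_1)$ is an element of $M$ and then pick $N \prec H(\omega_1)$ with $M_0 \in N \in M$. The first claim is false: a countable $M \prec H(\theta)$ never contains its own trace $M \cap H(\omega_1)$ as an element. Worse, taking $N \in M$ forces $N \in M_0$ and hence $N \subseteq M_0$ (both models are transitive), so the clause ``$\langle f(z) : z \in a \setminus M_0\rangle$ is mutually Cohen over $N$'' is strictly \emph{weaker} than genericity over $M_0$; your inference ``over $N$, hence over $M_0$'' goes the wrong way. But the amalgamation step needs genericity over a model that contains $\mathbb{P}(Y)\cap M$ and the trace $D \cap M$ of the given dense set, i.e.\ over $M_0$ or something larger. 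With $N \subsetneq M_0$ the relevant dense open subset of $\mathbb{C}^{\lvert a\setminus M_0\rvert}$ is not coded in $N$, and the argument collapses at exactly the point you flag as ``the main obstacle''.

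The paper repairs this by going \emph{outward} rather than inward: one fixes a second countable $K^+ \prec H(\theta)$ with $K \in K^+$ and sets $N = K^+ \cap H(\omega_1)$. Then $M_0 = K \cap H(\omega_1)$ and $Y \cap M_0$ are hereditarily countable elements of $K^+$, hence $(M_0,\in,Y\cap M_0) \in N$, and crucially $\mathbb{P}(Y)\cap K$ and $A \cap K$ for every predense $A \in K$ lie in $N$. The master condition is $(w, s \cup \{(M_0,N)\})$ with \emph{this} $N$. Genericity over $K$ is then proved by induction on the number of nodes of the extension lying \emph{above} $(M_0,N)$: at each step one rewrites the working part as a forcing $\mathbb{Q}_0(F)$ with $F$ in the second coordinate $N_0$ of the current top node and applies Lemma~\ref{lem:cohen}. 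Your sketch of the amalgamation is in the right spirit, but once you fix the choice of $N$ you should also organise the argument along this induction on the tail $t$; trying to absorb all new points at once as in Proposition~\ref{prop:cohen} can be made to work, but only because the side-condition structure above $(M_0,N)$ supplies the layered genericity that the inductive version makes explicit.
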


\begin{proof}
Let $K \preccurlyeq H(\theta)$ be countable with  $Y \in K$, for some large $\theta$. Furthermore, let $K \in K^+ \preccurlyeq H(\theta)$ be another countable model. Consider $M = K \cap H(\omega_1)$ and $N = K^+ \cap H(\omega_1)$. Then $(M,N)$ is a node, since both $M$ and $Y \cap M$ are hereditarily countable in $K^+$ and thus elements of $N$. Elementarity of $(M, \in, Y \cap M)$ and $(N,\in, Y \cap N)$ follows easily from the definability of $H(\omega_1)$ within $K, K^+$ and the elementarity of $K, K^+$. Any subset of $M$ that lies in $K^+$ is an element of $N$, since $M$ is countable in $K^+$. In particular, $\mathbb{P}(Y) \cap K = \mathbb{P}(Y) \cap M \in N$. Moreover, for any subset $A \in K$ of $\mathbb{P}(Y)$, $A \cap K = A \cap M \in N$. 

In the following, $r \parallel A$ means that $r$ is compatible with some element of $A$.
\begin{claim}\label{claim:proper}
    Let $A \in N$ be any predense subset of $\mathbb{P}(Y) \cap M$ and $r$ be any condition of the form $r = (w, s \cup \{ (M,N) \} \cup t)$, where $s \in M$ and $t \cap M = \emptyset$. In other words, $r$ is any condition that contains $(M,N)$ in its side condition. Then $r \parallel A$.
\end{claim}
    \begin{proof}
    We proceed by induction on the length of $t$. If $t$ is empty, then $(w, s)\in \mathbb{P}(Y) \cap M$. By assumption $(w,s)   \parallel A$ since $A$ is predense in $\mathbb{P}(Y) \cap M$. Thus there is $(w',s') \leq (w,s)$ in $M$, extending an element of $A$. Then $(w',s' \cup \{(M,N\}))$ is a condition extending $(w,s \cup \{(M,N) \})$ and that same element of $A$.
    
    Now suppose that $t = t_0 \cup \{ (M_1, N_1)\}$, where $t_0 \in M_1$ and let $(M_0, N_0)$ be the last node of $t_0$, or $(M,N)$ if $t_0 =\emptyset$. Let $$\mathbb{Q}_0 = \begin{cases}
        \{ w' \in \mathbb{Q} : \exists s' \in M, s \subseteq s' ( (w',s') \in \mathbb{P}(Y)) \}&\text{ if } t_0 = \emptyset\\
        \{ w' \in \mathbb{Q} : \exists s' \in M, s \subseteq s' (w',s' \cup \{(M,N) \} \cup t_0) \in \mathbb{P}(Y)\} &\text{ otherwise. }
    \end{cases}$$ Then note that $\mathbb{Q}_0 \in N_0$ is a forcing of the form $\mathbb{Q}_0(F)$ for $F \in N_0$.\footnote{To see that $\mathbb{Q}_0, F \in N_0$, note that the subset of $M_0$ consisting of countable elementary submodels of $(H(\omega_1), \in, Y)$ can be defined in $N_0$ as the elements of $M_0$ that are elementary submodels of $(M_0, \in, Y \cap M_0)$.} Furthermore, let $E = \{ w' \in \mathbb{Q} : \exists s' ((w', s') \in A) \}$. Then $E \subseteq \mathbb{Q}_0$, $E \in N_0$ and by the inductive hypothesis, it is predense in $\mathbb{Q}_0$. Now use Lemma~\ref{lem:cohen}, for $\mathbb{P}, \mathbb{P}'$ trivial forcings, and a similar argument as in Proposition~\ref{prop:cohen} to finish the inductive step.
    \end{proof} Finally, if $(w,s) \in K$ is an arbitrary condition, then we have shown that $(w, s \cup \{(M,N)\})$ is a master condition over $K$. This proves the lemma.\end{proof}

\begin{lemma}\label{lem:cohenpres}
   Let $p, Y \in K \preccurlyeq H(\theta)$, $K$ countable, for large $\theta$, and let $c$ be a Cohen real over $K$. Then there is a master condition $q \leq p$ over $K$ so that $q \Vdash c \text{ is Cohen over } K[\dot G]$.
    \end{lemma}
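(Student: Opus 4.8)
The plan is to check that the master condition already produced in the proof of Lemma~\ref{lem:proper} does the job. Fix a countable $K^{+}\preccurlyeq H(\theta)$ with $K\in K^{+}$, put $M=K\cap H(\omega_{1})$, $N=K^{+}\cap H(\omega_{1})$, and let $q=(w_{p},s_{p}\cup\{(M,N)\})\le p$; this is a master condition over $K$ by Lemma~\ref{lem:proper}. Since Cohenness of a complex number refers only to open dense subsets of $\mathbb{C}$ coded in $H(\omega_{1})$, the real $c$ is Cohen over $M$. Every open dense $U\subseteq\mathbb{C}$ in $K[\dot G]$ equals $\dot U^{\dot G}$ for a $\mathbb{P}(Y)$-name $\dot U\in K$, and replacing $\dot U$ by the name ``$\dot U$ if $\dot U$ is open dense, else $\mathbb{C}$'' changes nothing in the case at hand; so it suffices to show that for every $\mathbb{P}(Y)$-name $\dot U\in K$ with $\mathbbm{1}\Vdash\dot U$ open dense in $\mathbb{C}$, the set of $q''\le q$ forcing $\check c\in\dot U$ is dense below $q$.

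So fix such a $\dot U$ and a condition $q'\le q$, and write the side condition of $q'$ as $s_{0}\cup\{(M,N)\}\cup t$, where the nodes of $s_{0}$ lie in $M$ and $t$ collects the nodes above $(M,N)$. We produce $q''\le q'$ with $q''\Vdash\check c\in\dot U$ by induction on $|t|$, following verbatim the bookkeeping of Claim~\ref{claim:proper}: the inductive step peels off the topmost node of $t$ after replacing the current entire-function coordinate by an approximation internal to the relevant model (Lemma~\ref{lem:modelapprox}), invoking Lemma~\ref{lem:cohen} with trivial auxiliary forcings (as in the inductive step of Claim~\ref{claim:proper} and as in Proposition~\ref{prop:cohen}) to carry the relevant density one level down. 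This reduces matters to the base case $t=\emptyset$, so $q'=(w',s_{0}\cup\{(M,N)\})$. Exactly as in Claim~\ref{claim:proper}, Lemma~\ref{lem:modelapprox} lets us reflect $q'$ to a condition $\tilde q'\in\mathbb{P}(Y)\cap M$ extending the $s_{0}$-part, whose entire-function coordinate is $M$-internal and close enough to that of $q'$ that every extension of $\tilde q'$ inside $\mathbb{P}(Y)\cap M$ glues back onto $q'$ (this gluing is the computation near the end of the proof of Lemma~\ref{lem:cohen}). Now, working inside the structure $(H(\omega_{1}),\in,Y)$ and its elementary submodel $M$, set
$$ U^{*}\ :=\ \bigcup\{\,O'\ :\ O'\text{ a rational ball and some }e\le\tilde q'\text{ in }\mathbb{P}(Y)\cap M\text{ forces }O'\subseteq\dot U\,\}. $$
Since $\mathbbm{1}\Vdash\dot U$ open dense, a routine argument (extend $\tilde q'$ to decide a rational ball inside a prescribed ball to lie in $\dot U$, shrink it using openness, then locate the deciding condition in $M$ by elementarity) shows $U^{*}$ is dense in $\mathbb{C}$, and $U^{*}$ is coded in $M$ because the fragment of the $\mathbb{P}(Y)$-forcing relation used here is definable over $(H(\omega_{1}),\in,Y)$. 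As $c$ is Cohen over $M$, $c\in U^{*}$, so there are $e\le\tilde q'$ in $\mathbb{P}(Y)\cap M$ and a rational ball $O'_{c}\ni c$ with $e\Vdash O'_{c}\subseteq\dot U$, hence $e\Vdash\check c\in\dot U$. Since $q'$ contains $(M,N)$, the gluing argument of Claim~\ref{claim:proper} produces a common extension $q''\le q',e$, and $q''\le e$ gives $q''\Vdash\check c\in\dot U$. This settles the base case, the induction, and the lemma.

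The genuine obstacle is the spot where masterness of $q$ over $K$ is \emph{not} enough: the set $\{e:e\Vdash\check c\in\dot U\}$ is dense below $q$ but is not an element of $M$, as it mentions $c$, so $K$-genericity of $\dot G$ does not by itself force $\dot G$ to meet it. The argument gets around this by meeting the $M$-coded open dense set $U^{*}$ with the Cohen real $c$ instead. For this replacement to be legitimate one needs, first, that the relevant fragment of the $\mathbb{P}(Y)$-forcing relation be computed correctly inside $M$ --- this is the $\lessdot_{M}$-behaviour packaged by Lemma~\ref{lem:cohen} and Proposition~\ref{prop:cohen} --- and, second, that a condition $e$ found inside $M$ be gluable onto the ``large'' condition $q'$ whose entire-function coordinate lives outside $M$, which is the compatibility computation at the heart of the proof of Lemma~\ref{lem:cohen} combined with the approximation Lemma~\ref{lem:modelapprox}. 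Keeping track, through these gluings and the induction on $|t|$, of which models' mutual-Cohen-genericity clauses still need checking is the one fiddly ingredient.
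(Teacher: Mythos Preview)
Your strategy is sound and, once repaired, even a bit slicker than the paper's; but as written there is a real gap in the choice of $K^{+}$ and hence in the inductive step.

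You pick an \emph{arbitrary} countable $K^{+}\preccurlyeq H(\theta)$ with $K\in K^{+}$, and then claim the inductive step goes ``verbatim'' as in Claim~\ref{claim:proper}. But that claim's induction hinges on the predense set $A$ lying in $N$ (so that the derived set $E$ of working parts lies in $N_{0}$, and Lemma~\ref{lem:cohen} applies with $M=N_{0}$). Your target set, however, is $\{e:e\Vdash\check c\in\dot U\}$, which mentions $c$; and nothing you have arranged puts $c$ into $N$ or into any of the higher $N_{0}$'s. Your base case is fine precisely because there you only need $c$ to be Cohen over $M$, which it is; but to peel off an upper node via Lemma~\ref{lem:cohen} you must hand that lemma a predense set living in the model you peel to, and you have no such set. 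The hand-wave ``carry the relevant density one level down'' hides exactly this missing membership.

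The repair is one line: choose $K^{+}$ with $c\in K^{+}$. Then $c\in N$. Observe that for each $e\in\mathbb{P}(Y)\cap M$ the open set $U_{e}=\bigcup\{O:e\Vdash O\subseteq\dot U\}$ is coded by a real and lies in $K$, hence in $M$; the map $e\mapsto U_{e}$ restricted to the \emph{countable} set $\mathbb{P}(Y)\cap M$ is therefore a hereditarily countable object in $K^{+}$, so it lies in $N$. Thus $A_{c}:=\{e\in\mathbb{P}(Y)\cap M:c\in U_{e}\}\in N$. Your base-case computation with $U^{*}$ shows $A_{c}$ is dense in $\mathbb{P}(Y)\cap M$. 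Now Claim~\ref{claim:proper} applies \emph{directly} with $A=A_{c}$: any $q'\le q$ containing $(M,N)$ is compatible with some $e\in A_{c}$, and any common extension forces $\check c\in\dot U$. No separate induction on $|t|$ is needed.

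The paper takes a different route: it too puts $c$ into $K^{+}$, but additionally throws in a $\operatorname{Coll}(\omega,\lambda)$-generic $H$ over $K[c]$. Then Claim~\ref{claim:proper} gives $(\mathbb{P}(Y)\cap K)$-genericity of $G\cap K$ over $K^{+}\supseteq K[H][c]$; since $\mathbb{P}(Y)\cap K$ is countable in $K[H]$ it is Cohen there, so $c$ and $G\cap K$ are mutually Cohen over $K[H]$ and hence $c$ stays Cohen over $K[G]\subseteq K[H][G\cap K]$. That argument avoids naming $\dot U$ altogether, at the price of the collapse trick; your (corrected) argument avoids the collapse, at the price of checking that the countable function $e\mapsto U_{e}$ lands in $N$.
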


    This is known as ``almost preserving $\sqsubseteq^{\operatorname{Cohen}}$" in \cite[Section 6.3.C]{Bartoszynski} and implies the preservation of non-meager sets in countable support iterations. 
    
\begin{proof}
    Let $M = K \cap H(\omega_1)$ and $H$ be a $\operatorname{Coll}(\omega, \lambda)$-generic over $K[c]$, where $\lambda = \vert H(\omega_1) \vert^K$. Then $c$ is still a Cohen real over $K[H]$ and $K[H] \models \vert M \vert = \omega$. Moreover, $\mathbb{P}(Y) \cap K = \mathbb{P}(Y) \cap M \in K[H]$, since $\mathbb{P}(Y) \cap M$ is definable from $M, Y \cap M \in K[H]$. Now let $K^+ \preccurlyeq H(\theta)$ be countable with $K, H, c \in K^+$. Let $p = (w,s)$ and $N = K^+ \cap H(\omega_1)$. Then $q = (w,s \cup \{ (M,N) \})$ is a master condition over $K$, as in the proof of Lemma~\ref{lem:proper}. Let $G \ni q$ be $\mathbb{P}(Y)$-generic over $V$. According to Claim~\ref{claim:proper}, for any predense subset $A \in K^+$ of $\mathbb{P}(Y) \cap K$, $A \cap G \neq \emptyset$. Thus $G \cap K$ is $\mathbb{P}(Y) \cap K$-generic over $K^+$ and in particular over $K[H][c] \subseteq K^+$. But $\mathbb{P}(Y) \cap K$ is a countable forcing in $K[H]$ and thus equivalent to Cohen forcing (or, in the simplest case, a trivial forcing) witnessed through an isomorphism in $K[H]$. So $K[H][c][G \cap K]$ is a Cohen extension of $K[H][c]$, and $c$ is still Cohen generic over $K[H][G \cap K]$. In particular $c$ is still Cohen generic over $K[G]  = K[G \cap K] \subseteq K[H][G \cap K]$. 
   \end{proof}
    
\begin{lemma}
    Let $Y \subseteq \mathbb{C}$ be everywhere non-meager. Then $\mathbb{P}(Y)$ generically adds a non-constant entire function $f$ such that $f(\mathbb{C}^V) \subseteq Y$.
\end{lemma}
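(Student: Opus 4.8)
The plan is to imitate the construction of Lemma~\ref{lem:genericfunction}, but with the twist that whenever the generic function is pinned down at a new point, the value must be chosen inside $Y$ and sufficiently generic over the relevant side--condition models. Let $G$ be $\mathbb{P}(Y)$--generic. First I would check that each $D_n=\{(w,s)\in\mathbb{P}(Y):\varepsilon_w<1/n\wedge m_w>n\}$ is dense: given $(w,s)$ just replace $w$ by $(a_w,f_w,\varepsilon',m')$ with $\varepsilon'<\min(\varepsilon_w,1/n)$ rational and $m'>\max(m_w,n)$; this stays in the top model and leaves every genericity clause untouched. Exactly as in Lemma~\ref{lem:genericfunction}, using Proposition~\ref{prop:convergence}, a decreasing sequence $p_n\in D_n\cap G$ yields $\langle f_{w_{p_n}}:n\in\omega\rangle$ converging uniformly on compact sets to an entire $f$ independent of the choice of sequence. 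If in addition $E_z=\{(w,s)\in\mathbb{P}(Y):z\in a_w\}$ is dense for every $z\in\mathbb{C}^V$, then for such $z$ we pick $(w,s)\in E_z\cap G$, observe that by the definition of $\mathbb{P}(Y)$ the value $f_w(z)$ lies in $M_{i+1}\cap Y$ where $i$ is the level at which $z$ enters $s$, and that $f_{w'}(z)=f_w(z)$ for all $(w',s')\le(w,s)$, so $f(z)=f_w(z)\in Y$. Hence $f(\mathbb{C}^V)\subseteq Y$.

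The crux is the density of $E_z$. Fix $p=(w,s)\in\mathbb{P}(Y)$ with $s=\{(M_j,N_j):j\le n\}$ and $z\notin a_w$. If $z\in M_{n-1}$ we keep $s$; if $z\in M_n\setminus M_{n-1}$ we first extend $s$ by one new node $(M,N)$ on top with $(M_n,N_n)\in M$; and if $z\notin\bigcup_{j\le n}M_j$ we add two new nodes $(M,N),(M',N')$ with $z\in M$ and $(M,N)\in M'$ --- in each case, built exactly as in the proof of Lemma~\ref{lem:proper}, this places $z$ in the penultimate model of the (possibly enlarged) side condition. Let $i$ be the level at which $z$ enters, and let $(M_i,N_i)$, $M_{i+1}$ refer to the enlarged condition; one checks that $f_w,z\in M_{i+1}$ or at least that $f_w(z)\in M_{i+1}$ in the critical case where $i+1<n$, using that the old side condition (hence $f_w$) belongs to the models added on top. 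After enlarging $m_w$ so that $a_w\cup\{z\}\subseteq B_{m_w}(0)$, fix a rational ball $B$ around $f_w(z)$ small enough that the interpolation $f_w+\xi\prod_{z'\in a_w}(\,\cdot\,-z')$ hitting any value of $B$ at $z$ keeps the $\|\cdot\|_{m_w}$--perturbation below the required bound; then $B$ is coded in $M_{i+1}$. Let $\bar c=\langle f_w(z'):z'\in a_w$ entering at level $i\rangle$, a finite tuple in $M_{i+1}\cap Y$ that is mutually Cohen generic over $N_i$ by the choice of $p$. Since $N_i,\bar c\in M_{i+1}$ and are countable there, $M_{i+1}$ contains a Borel code $c^*$ for a meager set covering every meager set coded in $N_i[\bar c]$. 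Now $(M_{i+1},\in,Y\cap M_{i+1})\preccurlyeq(H(\omega_1),\in,Y)$ and the latter thinks ``$Y$ is everywhere non-meager'', so by elementarity there is $y\in Y\cap M_{i+1}$ with $y\in B\setminus(B_{c^*}\cup f_w[a_w])$. Then $\langle\bar c,y\rangle$ is mutually Cohen generic over $N_i$, and extending $w$ to $(a_w\cup\{z\},f_w+\xi\prod_{z'\in a_w}(\,\cdot\,-z'),\varepsilon'',m_w)$ with $\xi$ forcing value $y$ at $z$ and $\varepsilon''>0$ small gives a condition below $p$ in $E_z$, all of whose clauses are met.

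The point to stress is that ``everywhere non-meager'' rather than merely ``dense'' is what makes the previous step go through: density of $Y$ inside $M_{i+1}$ (which follows from elementarity applied to rational balls) only produces $y\in Y\cap M_{i+1}$ near $f_w(z)$, but keeping $\langle\bar c,y\rangle$ mutually Cohen over $N_i$ forces us to place $y$ outside the meager set $B_{c^*}$ as well, and it is non-meagerness of $Y$ in every rational ball --- reflected into $M_{i+1}$ --- that guarantees $Y\cap B\setminus B_{c^*}\neq\emptyset$. This is exactly the single-point version of the mechanism behind Lemma~\ref{lem:cohen} and Proposition~\ref{prop:cohen}, now run against a non-meager target set in place of a full dense set.

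Finally, non-constancy costs nothing extra: at each successor step we additionally demand $y\notin f_w[a_w]$ (removing a finite set from $B$ keeps it non-meager for $Y$), so by induction along $G$ the resulting $f$ is injective on $\bigcup_{p\in G}a_{w_p}$, which by density of the $E_z$ contains all of $\mathbb{C}^V$; as $\mathbb{C}^V$ is infinite, $f$ is not constant. I expect the main obstacle to be the bookkeeping of ``which model must contain the value'': one has to guarantee that the value assigned to $z$ lands in the model $M_{i+1}$ sitting just above the level at which $z$ first appears --- possibly far below the model carrying the current working function --- and the device that makes this painless is precisely to route the target through a rational sub-ball of the perturbation ball around $f_w(z)$ that $M_{i+1}$ can see, and then invoke elementarity of $M_{i+1}$ in $(H(\omega_1),\in,Y)$.
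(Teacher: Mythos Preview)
Your proposal is correct and follows the same approach as the paper: extend the side condition so that $z$ lies in some model with a successor, then use elementarity of that successor model in $(H(\omega_1),\in,Y)$ together with the everywhere non-meagerness of $Y$ to find a value $c\in Y\cap M_{i+1}$ that is Cohen generic over $N_i[\bar c]$, after which the argument concludes exactly as in Lemma~\ref{lem:genericfunction}. Your write-up is in fact more detailed than the paper's very brief proof --- in particular, the rational-ball device you describe for the case $i+1<n$ (where $f_w$ need not lie in $M_{i+1}$) is precisely what is needed to make the paper's terse ``arbitrarily close to $f_w(z)$'' rigorous.
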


\begin{proof}
    Let $(w,s) \in \mathbb{P}(Y)$ and $z \in \mathbb{C}$. Extending $(w,s)$ further, we can assume that $z \in M_0$ for some $(M_0,N_0) \in s$, where $M_0$ is minimal with this property and there is a successor $(M_1, N_1) \in s$ of $(M_0,N_0)$. Since $Y$ is everywhere non-meager, $M_1$ knows this and since $N_0$ is countable in $M_1$, there is $c \in Y \cap M_1$ that is arbitrarily close to $f_w(z)$ and Cohen generic over $N_0[c_0, \dots, c_n]$, where $c_0, \dots, c_n$ enumerates the mutual Cohen generics $f_w(x)$, for $x \in a_w$ that first appear in $M_0$. The rest then follows as in the proof of Lemma~\ref{lem:genericfunction}.
\end{proof}

\begin{proof}[Proof of Theorem~\ref{thm:properuniv}]
    Let $Y = \mathbb{C}^V$ and iterate $\mathbb{P}(Y)$ in a countable support iteration of length $\omega_2$. By Lemma~\ref{lem:cohenpres} and \cite[Lemma 6.3.17, 6.3.20]{Bartoszynski}, $Y$ stays everywhere non-meager along the iteration. Everything else follows from standard counting of names arguments and the fact that $\mathbb{P}(Y)$ has size $\aleph_1$ under $\CH$.
\end{proof}

It seems that it would also suffice to consider nodes where $M,N$ are merely countable transitive models of $\ZF^-$, $(M, \in, Y \cap M)$ is countable in $N$ and $Y \cap M$ is non-meager in $M$. This has the slight advantage that $\mathbb{P}(Y) \cap M$ is already a definable subclass of $M$ and not just definable in $N$. Also in the proof of Lemma~\ref{lem:cohenpres} we could directly let $K^+ = K[H][c]$.

There is a also a ccc way to do this this starting from a model of $\diamondsuit$. This is a modification of the construction presented in \cite{Burke2009}, from which our result draws its main inspiration. Instead of generically adding an $\in$-increasing sequence of nodes using side conditions, we start already with a given sequence $\langle (M_\alpha, N_\alpha) : \alpha < \omega_1 \rangle$, where $\langle N_\alpha : \alpha < \omega_1 \rangle$ is an ``oracle" (see \cite{Shelah1998}). It can then be shown that the resulting forcing, consisting of those $w \in \mathbb{Q}$ such that $(w, s) \in \mathbb{P}(Y)$ for every $s \subseteq \langle (M_\alpha, N_\alpha) : \alpha < \omega_1 \rangle$, is ``oracle-cc". In fact, our proper forcing is built directly from this construction. The advantage is that it has a much easier setup and does not depend on a particular chosen sequence of nodes. Also there might be a bigger potential of generalising it to continuum higher than $\aleph_2$, although this is not very clear to us.

\section{Open questions}

\begin{quest}\label{quest:PFAMA}
    Does $\MA$ or $\PFA$ imply that there is a Wetzel family? Is $\MA + 2^{\aleph_0} = \aleph_2$ sufficient?
\end{quest}

Recall that $\operatorname{non}(\mathcal{M})$ is the least size of a non-meager set.

\begin{quest}
   Is every universal set non-meager under $\neg \CH$? In particular, can we replace $\MA$ with $\operatorname{non}(\mathcal{M}) = 2^{\aleph_0}$ in Theorem~\ref{thm:MAuniv}?   
\end{quest}

\begin{quest}
    Is the existence of a universal set consistent with $2^{\aleph_0} = \aleph_3$? With $2^{\aleph_0} = \kappa$ for arbitrary successor cardinal $\kappa$?
\end{quest}

Recall that a domain $\Omega \subseteq \mathbb{C}$ is any open connected subset of $\mathbb{C}$. We may then define the analoguous notion of Wetzel families on $\Omega$ for functions that are holomorphic on $\Omega$. 

\begin{quest}
Let $\Omega \subset \mathbb{C}$ be any domain and suppose that there is a Wetzel family on $\Omega$. Does there exist a Wetzel family on the whole of $\mathbb{C}$? What about $\Omega = \mathbb{C} \setminus \{0\}$?
\end{quest}

\begin{quest}
Can we characterize when $\mathbb{Q}(H)$ is ccc? 
\end{quest}

\bibliographystyle{plain}

\end{document}